\newcommand {\R}{{\mathbb R}}
\newcommand {\C}{{\mathbb C}}
\newcommand {\N}{{{\mathbb N}}}
\newcommand {\Z}{{{\mathbb Z}}}
\newcommand{\A}{{\mathfrak{A}}}
\newcommand{\B}{{\mathcal{B}}}
\newcommand{\Br}{{\dot{\mathrm{B}}}}
\newcommand{\Ba}{{\mathfrak{B}}}
\newcommand{\ce}{\mathrm{c}}
\newcommand{\F}{{\mathcal{F}}}
\newcommand{\Ic}{{\mathcal{I}}}
\newcommand{\Ell}{\mathrm{L}}
\newcommand{\La}{{\mathcal{L}}}
\newcommand{\Pp}{{\mathcal{P}}}
\newcommand{\Qq}{{\mathcal{Q}}}
\newcommand{\Tr}{{\mathcal{T}}}
\newcommand{\W}{{\mathrm{W}}}
\newcommand{\ud}{\mathrm{d}}
\newcommand{\ph}{{\varphi}}
\newcommand{\w}{{\omega}}
\newcommand{\spA}{{\rm{sp}(A)}}
\newcommand{\specA}{{\nu(A)}}
\newcommand{\specB}{{\nu(B)}}
\newcommand{\funA}{{\nu(A)}}
\newcommand{\funB}{{\nu(B)}}
\newcommand{\diagA}{{K_{A}}}
\newcommand{\diagB}{{K_{B}}}
\newcommand{\scal}{{\La_{\textrm{s}}}}
\newcommand{\abs}[1]{\lvert#1\rvert}
\newcommand{\norm}[1]{\left\|#1\right\|}
\newcommand{\vanish}[1]{\relax}
\def\const{{\mathrm{const}\,}}
\newtheorem{theorem}{Theorem}[section]
\newtheorem{lemma}[theorem]{Lemma}
\newtheorem{problem}[theorem]{Problem}
\newtheorem{proposition}[theorem]{Proposition}
\newtheorem{corollary}[theorem]{Corollary}
\newtheorem{assumption}[theorem]{Assumption}
\theoremstyle{definition}
\newtheorem{definition}[theorem]{Definition}
\newtheorem{remark}[theorem]{Remark}
\numberwithin{equation}{section}
\title{Operator Lipschitz functions on Banach spaces}
\author{Jan Rozendaal}
\address{Delft Institute of Applied Mathematics\\Mekelweg 4\\2628CD Delft\\The Netherlands}
\email{janrozendaalmath@gmail.com}
\thanks{The first-named author is supported by NWO-grant
613.000.908 ``Applications of Transference Principles".}
\author{Fedor Sukochev}
\address{School of Mathematics \& Statistics, University of NSW,
  Kensington NSW 2052 AUSTRALIA}
\email{f.sukochev@unsw.edu.au}
\author{Anna Tomskova}
\address{}
\email{a.tomskova@unsw.edu.au}
\subjclass[2010]{Primary 47A55, 47A56; secondary 47B47.}
\date{\today}
\dedicatory{}
\keywords{}
\begin{document}

\begin{abstract}
Let $X$, $Y$ be Banach spaces and let $\La(X,Y)$ be the space of bounded linear operators from $X$ to $Y$. We develop the theory of double operator integrals on $\La(X,Y)$ and apply this theory to obtain commutator estimates of the form
\begin{align*}
\norm{f(B)S-Sf(A)}_{\La(X,Y)}\leq \const \norm{BS-SA}_{\La(X,Y)}
\end{align*}
for a large class of functions $f$, where $A\in\La(X)$, $B\in \La(Y)$ are scalar type operators and $S\in \La(X,Y)$. In particular, 
we establish this estimate for $f(t):=\abs{t}$ and for diagonalizable operators on $X=\ell_{p}$ and $Y=\ell_{q}$, for $p<q$ and $p=q=1$, and for $X=Y=\ce_{0}$. We also obtain results for $p\geq q$. 

We study the estimate above in the setting of Banach ideals in $\La(X,Y)$. The commutator estimates we derive hold for diagonalizable matrices with a constant independent of the size of the matrix.
\end{abstract}

\maketitle

\section{Introduction}
Let $X$ be a Banach space and let $\La(X)$ be the space of all bounded
linear operators on $X.$ Let $A,B \in\La(X)$ be scalar type
operators (see Definition \ref{scalar type operator} below) on
$X$. Let $f: {\rm sp}(A)\cup {\rm sp}(B)\to \mathbb C$ be a bounded Borel
function, where ${\rm sp}(A)$ (resp.~${\rm sp}(B)$) is the spectrum
of the operator $A$ (resp.~$B$). We are interested in
Lipschitz type estimates
\begin{align}\label{Lip_est_Intro}
\norm{f(B)-f(A)}_{\La(X)}\leq \const \norm{B-A}_{\La(X)},
\end{align}
where $\|\cdot\|_{\La(X)}$ is the uniform operator norm on the space $\La(X)$, and more generally in commutator estimates
\begin{align}\label{com_est_Intro}
\norm{f(B)S-Sf(A)}_{\La(X,Y)}\leq \const \norm{BS-SA}_{\La(X,Y)}
\end{align}
for Banach spaces $X$ and $Y$, scalar type
operators $A \in\La(X)$ and $B \in\La(Y)$, and $S\in\La(X,Y)$. This problem is well known in the special case where $X=Y$ is a separable Hilbert space, such as $\ell_{2}$, and $A$ and $B$ are normal operators on $X$. In this paper we study such estimates in the Banach space setting, and specifically for $X=\ell_{p}$ and $Y=\ell_{q}$ with $p,q\in[1,\infty]$.

In the special case where $A,B$ are self-adjoint bounded operators on a Hilbert space $H$ the estimate
\begin{align}\label{Lip_est_Intro_Hilbert}
\norm{f(B)-f(A)}_{\La(H)}\leq \const \norm{B-A}_{\La(H)}
\end{align}
was established by Peller \cite{Peller06, Peller85} (see also \cite{dPS}) for $f:\mathbb R\to\mathbb R$ in the Besov class $\Br^{1}_{\infty,1}(\R)$ (for the definition of $\Br^{1}_{\infty,1}(\R)$ see Section \ref{algebras of functions}).
This result extended a long line of results from \cite{BiSo1}-\cite{BiSo3}, in which the theory of double operator integration was developed to study the difference $f(B)-f(A)$ (see also \cite{Birman-Solomyak03}).
This theory was revised and extended in various directions, including the Banach space setting, in \cite{PWS02}.
However, until now the results in the general setting were much weaker than in the Hilbert space setting. 
In this paper we show that for scalar type operators on Banach spaces one can obtain results matching those on Hilbert spaces. 

In Corollary \ref{besov perturbation inequality1} below we prove that \eqref{Lip_est_Intro} holds when $A,B\in \La(X)$ are
scalar type operators with real spectrum and $f\in \Br^{1}_{\infty,1}(\R)$. It is immediate from the definition of a scalar type operator 
that every normal operator on $H$ is of scalar type. Therefore, Corollary \ref{besov perturbation inequality1} extends \eqref{Lip_est_Intro_Hilbert} to the Banach space setting. More generally, \eqref{com_est_Intro} holds for $f\in \Br^{1}_{\infty,1}(\R)$ and for all $S\in \La(X,Y)$ (see Corollary \ref{besov perturbation inequality}.)

If $f$ is the absolute value function then $f\notin\Br^{1}_{\infty,1}(\R)$ and the results mentioned above do not apply. Moreover, the techniques which we used to obtain
 \eqref{Lip_est_Intro} for $f\in\Br^{1}_{\infty,1}(\R)$ cannot be applied to the absolute value function (see Remark
 \ref{dimension-independent}).
However, the absolute value function is very important in the theory of matrix analysis and perturbation theory
(see~\cite[Sections VII.5 and X.2]{Bhatia97}).

In the case where $H$ is an infinite-dimensional Hilbert space, it was proved by Kato \cite{Kato} that the function
$t\mapsto|t|,$ $t\in\mathbb R$ does not satisfy \eqref{Lip_est_Intro_Hilbert}. An earlier example of  McIntosh \cite{McI} showed the failure in general of the commutator estimate \eqref{com_est_Intro} for this function, in the case $X=Y=H$. Later, it was
proved by Davies \cite{Davies} that for $1\le p\le\infty$ and the Schatten von-Neumann ideal $\mathcal S_p$ with the corresponding norm $\|\cdot\|_{\mathcal S_p}$, the estimate
\begin{align*}
\||B|-|A|\|_{\mathcal S_p}\le \const \|B-A\|_{\mathcal S_p}
\end{align*}
holds for all $A,B\in \mathcal S_p$ if and only if $1<p<\infty.$
Commutator estimates for the absolute value function and different Banach ideals in $\La(H)$ have also been studied
in \cite{DDPS}. The proofs in \cite{Davies, PWS02, DDPS} are based on Macaev's celebrated theorem (see \cite{GK}) or on the UMD-property of the reflexive Schatten von-Neumann ideals.
However, the spaces $\La(X,Y)$ are not UMD-spaces, and therefore the techniques used in \cite{Davies, PWS02, DDPS}  do not apply to them. To study \eqref{com_est_Intro} for $X=\ell_{p}$ and $Y=\ell_{q}$, we use completely different methods from those of \cite{Davies, PWS02, DDPS}. Instead, we use the theory of Schur multipliers on the space $\La(\ell_{p},\ell_{q})$ developed by Bennett \cite{Ben1},\cite{Ben}.

Let $p,q\in[1,\infty]$ with $p<q$, $p=q=1$ or $p=q=\infty$. In Section \ref{lp-spaces} we show (see Theorem \ref{truncation estimate_inf_lplq}) that, for diagonalizable operators (for the definition see Section \ref{unconditional basis spaces}) $A\in\La(\ell_{p})$ and $B\in\La(\ell_{q})$, and for the absolute value function $f$, 
\begin{align}\label{besov estimate_pq_Intro}
\norm{f(B)S-Sf(A)}_{\La(\ell_{p},\ell_{q})}\leq \const \,\norm{BS-SA}_{\La(\ell_{p},\ell_{q})}
\end{align}
holds for all $S\in \La(\ell_{p},\ell_{q})$ (where $\ell_{\infty}$ should be replaced by $\ce_{0}$). In particular,
\begin{align}\label{Lip_est_l1_Intro}
\norm{f(B)-f(A)}_{\La(\ell_{1})}\leq \const \norm{B-A}_{\La(\ell_{1})}
\end{align}
and
\begin{align}\label{Lip_est_c0_Intro}
\norm{f(B)-f(A)}_{\La(\ce_{0})}\leq \const \norm{B-A}_{\La(\ce_{0})}
\end{align}
for diagonalizable operators on $\ell_{1}$ respectively $\ce_{0}$. Therefore we show that, even though \eqref{besov estimate_pq_Intro} fails for $p=q=2$, and in particular \eqref{Lip_est_Intro} fails for $X=\ell_{2}$ and $f$ the absolute value function, one can obtain commutator estimates for $p<q$ and Lipschitz estimates \eqref{Lip_est_Intro} for $X=\ell_{1}$ or $X=\ce_{0}$. 

We also obtain results for $p\geq q$. In particular, for $p=q=2$ we prove (see Corollary \ref{case p=q=2}) that for each $\epsilon\in(0,1]$ there exists a constant $C\geq 0$ such that the following holds. Let $A,B\in\La(\ell_{2})$ be compact self-adjoint operators, and let $U,V\in\La(\ell_{2})$ be unitaries such that
\begin{align*}
UAU^{-1}=\sum_{j=1}^{\infty}\lambda_{j}\Pp_{j}\qquad\textrm{and}\qquad VBV^{-1}=\sum_{j=1}^{\infty}\mu_{j}\Pp_{j},
\end{align*}
where $\{\lambda_{j}\}_{j=1}^{\infty}$ and $\{\mu_{j}\}_{j=1}^{\infty}$ are sequences of real numbers and the $\Pp_{j}\in\La(\ell_{2})$, for $j\in\N$, are the basis projections corresponding to the standard basis of $\ell_{2}$. Then
\begin{align}\label{case p=q=2_Intro}
\norm{\abs{B}-\abs{A}}_{\La(\ell_{2})}\leq C\min\Big(\|V(B-A)U^{-1}\|_{\La(\ell_{2},\ell_{2-\epsilon})},\|V(B-A)U^{-1}\|_{\La(\ell_{2+\epsilon},\ell_{2})}\Big),
\end{align}
where we let the right-hand side equal infinity if $V(B-A)U^{-1}\notin\La(\ell_{2},\ell_{2-\epsilon})\cup\La(\ell_{2+\epsilon},\ell_{2})$.

We note that the constants which appear in our results depend on the spectral constants of $A$ and $B$ from Section \ref{scalar type operators}, and in \eqref{besov estimate_pq_Intro}, \eqref{Lip_est_l1_Intro} and \eqref{Lip_est_c0_Intro} on the diagonalizability constants of $A$ and $B$ from \eqref{definition diagonalizable constant}. These quantities are independent of the norms of $A$ and $B$, and to obtain constants which do not depend on $A$ and $B$ in any way one merely has to restrict to operators with a sufficiently bounded spectral or diagonalizability constant. This is already done implicitly on Hilbert spaces by considering normal operators, for which the quantities involved are equal to $1$. For example, in \eqref{case p=q=2_Intro} the constant $C$ does not depend on $A$ or $B$ in any way. Our results therefore truly extend the known estimates on Hilbert spaces, the main difference between Hilbert spaces and general Banach spaces being that on Hilbert spaces one has a large and easily identifiable class of operators with spectral constant $1$ or which are diagonalizable by an isometry.

We study the commutator estimate in \eqref{com_est_Intro} in the more general form
\begin{align}\label{com_est_ideal_Intro}
\norm{f(B)S-Sf(A)}_{\Ic}\leq \const \norm{BS-SA}_{\Ic},
\end{align}
where $\Ic$ is an operator ideal in $\La(X,Y)$. For example, we prove in Corollary \ref{besov perturbation inequality} that \eqref{Lip_est_Intro_Hilbert} holds for a general Banach ideal $\Ic$ in $\La(X)$ with the strong convex compactness property (for definitions see Section \ref{spaces of operators}), with respect to the norm $\norm{\cdot}_{\Ic}$. 

We also present (see Theorem \ref{th_p_summing_main}) an example of a Banach ideal $(\mathcal I,\|\cdot\|_{\mathcal I})$ in $\La(l_{p^*},\ell_{p})$, for $1<p<\infty$ and $\frac{1}{p}+\frac{1}{p^{*}}=1$, namely the ideal of $p$-summing operators, such that any Lipschitz function $f$ 
(in particular, the absolute value function) satisfies \eqref{com_est_ideal_Intro}.

In the final section we apply our results to finite-dimensional spaces, and obtain commutator estimates for diagonalizable matrices. Any diagonalizable matrix is a scalar type operator, hence estimates \eqref{besov estimate_pq_Intro}-\eqref{com_est_ideal_Intro} hold for diagonalizable matrices $A$ and $B$ with a constant independent of the size of the matrix.

\section{Notation and terminology}\label{notation}

All vector spaces are over the complex number field. Throughout, $X$ and $Y$ denote Banach spaces, the space of bounded linear operators from $X$ to $Y$ is $\La(X,Y)$, and $\La(X):=\La(X,X)$. We identify the algebraic tensor product $X^{*}\!\otimes Y$ with the space of finite rank operators in $\La(X,Y)$ via $(x^{*}\!\otimes y)(x):=\langle x^{*},x\rangle y$ for $x\in X$, $x^{*}\in X^{*}$ and $y\in Y$. The spectrum of $A\in\La(X)$ is ${\rm sp}(A)$, and by $\mathrm{I}_{X}\in\La(X)$ we denote the identity operator on $X$.
Throughout the text we use the abbreviations SOT and WOT for the strong and weak operator topology, respectively.

The Borel $\sigma$-algebra on a Borel measurable subset $\sigma\subseteq\C$ will be denoted by $\mathfrak{B}_{\sigma}$, and $\mathfrak{B}:=\mathfrak{B}_{\mathbb C}$. For measurable spaces $(\Omega_{1},\Sigma_{1})$ and $(\Omega_{2},\Sigma_{2})$ we denote by $\Sigma_{1}\otimes \Sigma_{2}$ the $\sigma$-algebra on $\Omega_{1}\times\Omega_{2}$ generated by all measurable rectangles $\sigma_1\times \sigma_2$ with $\sigma_1\in\Sigma_{1}$ and $\sigma_2\in\Sigma_{2}$. If $(\Omega,\Sigma)$ is a measurable space then $\B(\Omega,\Sigma)$ is the space of all bounded $\Sigma$-measurable complex-valued functions on $\Omega$, a Banach algebra with the supremum norm
\begin{align*}
\norm{f}_{\B(\Omega,\Sigma)}:=\sup_{\w\in\Omega}\,\abs{f(\omega)}\qquad\qquad(f\in\B(\Omega,\Sigma)).
\end{align*}
We simply write $\B(\Omega):=\B(\Omega,\Sigma)$ and $\norm{f}_{\infty}:=\norm{f}_{\B(\Omega,\Sigma)}$ when little confusion can arise.

If $\mu$ is a complex Borel measure on a measurable space $(\Omega,\Sigma)$ and $X$ is a Banach space, then a function $f:\Omega\rightarrow X$ is \emph{$\mu$-measurable} if there exists a sequence of $X$-valued simple functions converging to $f$ $\mu$-almost everywhere. For Banach spaces $X$ and $Y$ and a function $f:\Omega\rightarrow \La(X,Y)$, we say that $f$ is \emph{strongly measurable} if $\omega\mapsto f(\omega)x$ is a $\mu$-measurable mapping $\Omega\rightarrow Y$ for each $x\in X$.

{ If $\mu$ is a positive measure on a measurable space $(\Omega,\Sigma)$ and $f:\Omega\rightarrow[0,\infty]$ is a function, we let
\begin{align*}
\overline{\int_{\Omega}}f(\omega)\,\ud\mu(\omega):=\inf\int_{\Omega}g(\omega)\,\ud\mu(\omega)\in[0,\infty],
\end{align*}
where the infimum is taken over all measurable $g:\Omega\rightarrow [0,\infty]$ such that $g(\omega)\geq f(\omega)$ for $\omega\in\Omega$.}

The H\"{o}lder conjugate of $p\in[1,\infty]$ is denoted by $p^{*}$ and is defined by $\tfrac{1}{p}+\tfrac{1}{p^{*}}=1$. The indicator function of a subset $\sigma$ of a set $\Omega$ is denoted by $\mathbf{1}_{\sigma}$. We will often identify functions defined on $\sigma$ with their extensions to $\Omega$ by setting them equal to zero off $\sigma$.

\section{Preliminaries}\label{preliminaries}

\subsection{Scalar type operators}\label{scalar type operators}

In this section we summarize some of the basics of scalar type operators, as taken from \cite{Dunford-Schwartz1971}.

Let $X$ be a Banach space. A \emph{spectral measure} on $X$ is a map $E:\Ba\rightarrow \La(X)$ such that the following hold:
\begin{itemize}
\item $E(\emptyset)=0$ and $E(\C)=\mathrm{I}_{X}$;
\item $E(\sigma_1\cap \sigma_2)=E(\sigma_1)E(\sigma_2)$ for all $\sigma_1,\sigma_2\in\Ba$;
\item $E(\sigma_1\cup \sigma_2)=E(\sigma_1)+E(\sigma_2)-E(\sigma_1)E(\sigma_2)$ for all $\sigma_1,\sigma_2\in\Ba$;
\item $E$ is $\sigma$-additive in the strong operator topology.
\end{itemize}
Note that these conditions imply that $E$ is projection-valued. Moreover, by \cite[Corollary XV.2.4]{Dunford-Schwartz1971} there exists a constant $K$ such that
\begin{align}\label{spectral constant}
\norm{E(\sigma)}_{\La(X)}\leq K\qquad\qquad(\sigma\in\Ba).
\end{align}
An operator $A\in\La(X)$ is a \emph{spectral operator} if there exists a spectral measure $E$ on $X$ such that $A E(\sigma)=E(\sigma)A$ and ${\rm sp}(A,E(\sigma)X)\subseteq \overline{\sigma}$ for all $\sigma\in\Ba$, where ${\rm sp}(A,E(\sigma)X)$ denotes the spectrum of $A$ in the space $E(\sigma)X$. For a spectral operator $A$, we let $\specA$ denote the minimal constant $K$ occurring in \eqref{spectral constant} and call $\specA$ the \emph{spectral constant of $A$}. This is well-defined since the spectral measure $E$ associated with $A$ is unique, cf.~\cite[Corollary XV.3.8]{Dunford-Schwartz1971}. Moreover, $E$ is supported on ${\rm sp}(A)$ in the sense that $E({\rm sp}(A))=\mathrm{I}_{X}$ \cite[Corollary XV.3.5]{Dunford-Schwartz1971}.
Hence we can define an integral with respect to $E$ of bounded Borel measurable functions on ${\rm sp}(A)$, as follows. For $f=\sum_{j=1}^{n}\alpha_{j}\mathbf{1}_{\sigma_{j}}$ a finite simple function with $\alpha_{j}\in\C$ and $\sigma_{j}\subseteq {\rm sp}(A)$ Borel for $1\leq j\leq n$, we let
\begin{align}\label{simple functions}
\int_{{\rm sp}(A)}f\,\ud E:=\sum_{j=1}^{n}\alpha_{j}E(\sigma_{j}).
\end{align}
This definition is independent of the representation of $f$, and
\begin{align*}
\norm{\int_{{\rm sp}(A)}f\,\ud E}_{\La(X)}&=\sup_{\norm{x}_{X}=\norm{x^{*}}_{X^{*}}=1}\left|\sum_{j=1}^{n}\alpha_{j}x^{*}E(\sigma_{j})x\right|\\
&\leq \sup_{j}\,\abs{\alpha_{j}}\sup_{\norm{x}_{X}=\norm{x^{*}}_{X^{*}}=1}\norm{x^{*}E(\cdot)x}_{\textrm{var}} \\ &\leq 4 \norm{f}_{\B({\rm sp}(A))}\sup_{\norm{x}_{X}=\norm{x^{*}}_{X^{*}}=1}\sup_{\sigma\subseteq{\rm sp}(A)}\abs{x^{*}E(\sigma)x}\\
&\leq 4\specA \norm{f}_{\B({\rm sp}(A))},
\end{align*}
where $\norm{x^{*}E(\cdot)x}_{\textrm{var}}$ is the variation norm of the measure $x^{*}E(\cdot)x$. Since the simple functions lie dense in $\B({\rm sp}(A))$, for general $f\in \B({\rm sp}(A))$
we can define
\begin{align*}
\int_{{\rm sp}(A)}f\,\ud E:=\lim_{n\rightarrow\infty}\int_{{\rm sp}(A)}f_{n}\,\ud E\in \La(X)
\end{align*}
if $\{f_{n}\}_{n=1}^{\infty}\subseteq \B({\rm sp}(A))$ is a sequence of simple functions with $\norm{f_{n}-f}_{\infty}\rightarrow 0$ as $n\rightarrow \infty$. This definition is independent of the choice of approximating sequence and 
\begin{align}\label{norm bound for Borel functions}
\norm{\int_{{\rm sp}(A)}f\,\ud E}_{\La(X)}\leq 4\specA\norm{f}_{\B({\rm sp}(A))}.
\end{align}
It is straightforward to check that
\begin{align*}
\int_{{\rm sp}(A)}(\alpha f+g)\,\ud E&=\alpha\int_{{\rm sp}(A)}f\,\ud E+\int_{{\rm sp}(A)}g\,\ud E,\\
\int_{{\rm sp}(A)}fg\,\ud E&=\left(\int_{{\rm sp}(A)}f\,\ud E\right)\!\left(\int_{{\rm sp}(A)}g\,\ud E\right)
\end{align*}
for all $\alpha\in\C$ and simple $f,g\in\B({\rm sp}(A))$, and approximation extends these identities to general $f,g\in \B({\rm sp}(A))$. Moreover, $\int_{{\rm sp}(A)}\mathbf{1}\,\ud E=E({\rm sp}(A))=\mathrm{I}_{X}$. Hence the map $f\mapsto \int_{{\rm sp}(A)}f\,\ud E$ is a continuous morphism $\B({\rm sp}(A))\rightarrow \La(X)$ of unital Banach algebras. Since the spectrum of $A$ is compact, the identity function $\lambda\mapsto \lambda$ is bounded on ${\rm sp}(A)$ and $\int_{{\rm sp}(A)}\lambda\;\ud E(\lambda)\in\La(X)$ is well defined.

\begin{definition}\label{scalar type operator}
A spectral operator $A\in\La(X)$ with spectral measure $E$ is a \emph{scalar type operator} if
\begin{align*}
A=\int_{{\rm sp}(A)}\lambda\;\ud E(\lambda).
\end{align*}
The class of scalar type operators on $X$ is denoted by $\scal(X)$.
\end{definition}
For $A\in\scal(X)$ with spectral measure $E$ and $f\in\B({\rm sp}(A))$ we define
\begin{align}\label{functional calculus}
f(A):=\int_{{\rm sp}(A)}f\,\ud E.
\end{align}
Then, as remarked above, $f\mapsto f(A)$ is a continuous morphism $\B({\rm sp}(A))\rightarrow \La(X)$ of unital Banach algebras with norm bounded by $4\specA$. Note also that
\begin{align}\label{scalar measures}
\langle x^{*},f(A)x\rangle=\int_{{\rm sp}(A)}f(\lambda)\,\ud \langle x^{*},E(\lambda)x\rangle
\end{align}
for all $f\in\B({\rm sp}(A))$, $x\in X$ and $x^{*}\in X^{*}$. Indeed, for simple functions this follows from \eqref{simple functions}, and by taking limits one obtains \eqref{scalar measures} for general $f\in\B({\rm sp}(A))$.

Finally, we note that a normal operator $A$ on a Hilbert space $H$ is a scalar type operator with $\specA=1$, and in this case \eqref{norm bound for Borel functions} improves to
\begin{align}\label{norm bound for Borel functions and normal operators}
\norm{\int_{{\rm sp}(A)}f\,\ud E}_{\La(H)}\leq \norm{f}_{\B({\rm sp}(A))},
\end{align}
as known from the Borel functional calculus for normal operators.

\subsection{Spaces of operators}\label{spaces of operators}

In this section we discuss some properties of spaces of operators that we will need later on.

First we provide a lemma about approximation by finite rank operators. Recall that a Banach space $X$ has the \emph{bounded approximation property} if there exists $M\geq1$ such that, for each $K\subseteq X$ compact and $\epsilon>0$, there exists $S\in X^{*}\!\otimes\! X$ with $\norm{S}_{\La(X)}\leq M$ and $\sup_{x\in K}\norm{Sx-x}_{X}<\epsilon$.

\begin{lemma}\label{bounded approximation}
Let $X$ and $Y$ be Banach spaces such that $X$ is separable and either $X$ or $Y$ has the bounded approximation property. Then each $T\in\La(X,Y)$ is the SOT-limit of a norm bounded sequence of finite rank operators.
\end{lemma}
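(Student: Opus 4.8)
The plan is to exploit the bounded approximation property to produce a bounded net of finite rank operators that approximates $T$ in the SOT, and then extract a sequence using separability of $X$. First, suppose $Y$ has the bounded approximation property with constant $M\geq 1$. Given $T\in\La(X,Y)$, for each compact $L\subseteq Y$ and $\epsilon>0$ there is a finite rank $R\in Y^{*}\!\otimes Y$ with $\norm{R}_{\La(Y)}\leq M$ and $\sup_{y\in L}\norm{Ry-y}_{Y}<\epsilon$. Then $RT\in\La(X,Y)$ is finite rank (it factors through the finite-dimensional range of $R$), $\norm{RT}_{\La(X,Y)}\leq M\norm{T}_{\La(X,Y)}$, and for $x$ in a prescribed finite set $F\subseteq X$ one has $\norm{RTx-Tx}_{Y}<\epsilon$ by taking $L:=\{Tx:x\in F\}$, which is compact (in fact finite). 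Thus the finite rank operators approximate $T$ in the SOT, uniformly bounded by $M\norm{T}_{\La(X,Y)}$; this is the net $\{RT\}$ indexed by pairs $(F,\epsilon)$. If instead $X$ has the bounded approximation property with constant $M$, then for a finite set $F\subseteq X$ and $\epsilon>0$ pick $S\in X^{*}\!\otimes X$ with $\norm{S}_{\La(X)}\leq M$ and $\sup_{x\in F}\norm{Sx-x}_{X}<\epsilon/(1+\norm{T}_{\La(X,Y)})$; then $TS$ is finite rank, $\norm{TS}_{\La(X,Y)}\leq M\norm{T}_{\La(X,Y)}$, and $\norm{TSx-Tx}_{Y}<\epsilon$ for $x\in F$. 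Either way we obtain a uniformly bounded net of finite rank operators converging to $T$ in the SOT.

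Second, I pass from a net to a sequence using that $X$ is separable. Fix a countable dense set $\{x_{k}\}_{k=1}^{\infty}\subseteq X$. For each $n\in\N$, apply the construction above with the finite set $F_{n}:=\{x_{1},\dots,x_{n}\}$ and $\epsilon:=1/n$ to obtain a finite rank operator $T_{n}\in\La(X,Y)$ with $\norm{T_{n}}_{\La(X,Y)}\leq M\norm{T}_{\La(X,Y)}$ and $\norm{T_{n}x_{k}-Tx_{k}}_{Y}<1/n$ for all $k\leq n$. Then $T_{n}x_{k}\to Tx_{k}$ as $n\to\infty$ for every fixed $k$. Since the sequence $\{T_{n}\}$ is norm bounded and $\{x_{k}\}$ is dense in $X$, a standard $3\epsilon$-argument shows $T_{n}x\to Tx$ for every $x\in X$: given $x\in X$ and $\delta>0$, choose $x_{k}$ with $\norm{x-x_{k}}_{X}<\delta/(1+2M\norm{T}_{\La(X,Y)})$ and estimate $\norm{T_{n}x-Tx}_{Y}\leq\norm{T_{n}(x-x_{k})}_{Y}+\norm{T_{n}x_{k}-Tx_{k}}_{Y}+\norm{T(x_{k}-x)}_{Y}$, where the first and third terms are small by the norm bounds and the middle term tends to $0$. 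Hence $T$ is the SOT-limit of the norm bounded sequence $\{T_{n}\}$ of finite rank operators.

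The only genuinely delicate point is checking that the approximants we build really are finite rank operators in $\La(X,Y)$ rather than merely operators close to $T$: this is why the argument is phrased using compositions $RT$ or $TS$ with $R\in Y^{*}\!\otimes Y$ or $S\in X^{*}\!\otimes X$, so that finiteness of rank is automatic from the factorization through a finite-dimensional subspace, and the norm bound is automatic from submultiplicativity of the operator norm. Everything else is the routine separability reduction and a $3\epsilon$-estimate, so I do not expect any substantive obstacle.
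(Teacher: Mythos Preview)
Your proof is correct and follows the same overall strategy as the paper: use the bounded approximation property to produce a norm-bounded net of finite rank operators SOT-converging to $T$, then use separability of $X$ to pass to a sequence. The paper executes this more tersely: it cites \cite[Proposition 1.e.14]{Lindenstrauss-Tzafriri1977} for the existence of the bounded net, and then observes that the SOT is metrizable on bounded subsets of $\La(X,Y)$ (via $d(S_{1},S_{2}):=\sum_{k}2^{-k}\norm{S_{1}x_{k}-S_{2}x_{k}}_{Y}$ for $\{x_{k}\}$ dense in the unit ball of $X$), so a convergent sequence can be extracted from the net. Your version is more self-contained: you build the approximants $RT$ or $TS$ by hand, and you construct the sequence directly by a diagonal choice over finite sets $F_{n}=\{x_{1},\dots,x_{n}\}$ with tolerance $1/n$, followed by a $3\epsilon$-argument. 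Both routes are standard; yours avoids the external reference and the metrizability lemma at the cost of a few more lines, while the paper's is shorter but leans on the cited proposition.
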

\begin{proof}
Fix $T\in\La(X,Y)$. By \cite[Proposition 1.e.14]{Lindenstrauss-Tzafriri1977} there exists a norm bounded net $\left\{T_{j}\right\}_{j\in J}\subseteq X^{*}\!\otimes Y$ having $T$ as its SOT-limit. It is straightfroward to see that the strong operator topology is metrizable on bounded subsets of $\La(X,Y)$ by
\begin{align*}
d(S_{1},S_{2}):=\sum_{k=1}^{\infty}2^{-k}\norm{S_{1}x_{k}-S_{2}x_{k}}_{Y}\qquad(S_{1},S_{2}\in\La(X,Y)),
\end{align*}
where $\left\{x_{k}\right\}_{k\in\N}\subseteq X$ is a countable subset that is dense in the unit ball of $X$. Hence there exists a subsequence of $\left\{T_{j}\right\}_{j\in J}$ with $T$ as its SOT-limit.
\end{proof}

Let $X$ and $Y$ be Banach spaces and let $Z$ be a Banach space which is continuously embedded in $\La(X,Y)$. Following \cite{Voigt92} (in the case where $Z$ is a subspace of $\La(X,Y)$), we say that $Z$ has the \emph{strong convex compactness property} if the following holds. For any finite measure space $(\Omega,\Sigma,\mu)$ and any strongly measurable bounded $f:\Omega\rightarrow Z$, the operator $T\in\La(X,Y)$ defined by
\begin{align}\label{sccp operator}
Tx:=\int_{\Omega}f(\omega)x\,\ud\mu(\omega)\qquad(x\in X),
\end{align}
belongs to $Z$ with $\norm{T}_{Z}\leq \overline{\int_{\Omega}}{\norm{f(\omega)}_{Z}}\,\ud\mu(\omega)$. By the Pettis Measurability Theorem, any separable $Z$ has this property. Indeed, if $Z$ is separable then combining Propositions 1.9 and 1.10 in \cite{VTC85} shows that any strongly measurable $f:\Omega\to Z$ is $\mu$-measurable as a map to $Z$. If $f$ is bounded as well, then \eqref{sccp operator} defines an element of $Z$ with 
\begin{align*}
\norm{T}_{Z}\leq \int_{\Omega}{\norm{f(\omega)}_{Z}}\,\ud\mu(\omega).
\end{align*}
It is shown in~\cite{Voigt92} and~\cite{Schluchtermann92} that the compact and weakly compact operators have the strong convex compactness property, but not all subspaces of $\La(X,Y)$ do. Moreover, if $\mathcal N$ is a semifinite von Neumann algebra on a separable Hilbert space $H$, with faithful normal semifinite trace $\tau$, and $\mathcal F$ is a rearrangement invariant Banach function space with the Fatou property, then $\mathcal E=\mathcal N\cap \mathcal F(N,\tau)$ has the strong convex compactness property~\cite[Lemma 3.5]{ACDS09}.

\begin{lemma}\label{sccp}
Let $X$ and $Y$ be separable Banach spaces and $Z$ a Banach space continuously embedded in $\La(X,Y)$. If $B_{Z}:=\left\{z\in Z\mid \norm{z}_{Z}\leq 1\right\}$ is SOT-closed in $\La(X,Y)$, then $Z$ has the strong convex compactness property.
\end{lemma}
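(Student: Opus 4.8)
The plan is to approximate $f$ by countably-valued functions whose $Z$-valued integrals can be computed termwise, and then to transfer the resulting norm bound to $T$ using that the balls $rB_{Z}$ are SOT-closed. First I would check that $T$ is a well-defined element of $\La(X,Y)$: by hypothesis $f$ is strongly measurable as an $\La(X,Y)$-valued function, and since $Z$ embeds continuously in $\La(X,Y)$ and $f$ is bounded in $Z$, it is also bounded in $\La(X,Y)$; hence for each $x$ the map $\omega\mapsto f(\omega)x$ is $\mu$-measurable and bounded, so Bochner integrable over the finite measure space, and $\norm{Tx}_{Y}\le\mu(\Omega)\big(\sup_{\omega}\norm{f(\omega)}_{\La(X,Y)}\big)\norm{x}_{X}$. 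Writing $M:=\sup_{\omega}\norm{f(\omega)}_{Z}<\infty$, one checks that $\overline{\int_{\Omega}}\norm{f(\omega)}_{Z}\,\ud\mu(\omega)$ equals the infimum of $\int_{\Omega}g\,\ud\mu$ over measurable $g\colon\Omega\to[0,M]$ with $g(\omega)\ge\norm{f(\omega)}_{Z}$ for all $\omega$ (replace an arbitrary admissible $g$ by $g\wedge M$). So it suffices to fix one such $g$, put $c:=\int_{\Omega}g\,\ud\mu<\infty$, and prove $T\in Z$ with $\norm{T}_{Z}\le c$.

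To this end, fix $\delta>0$ and set $\Omega_{k}:=g^{-1}\big([k\delta,(k+1)\delta)\big)$ for integers $k\ge0$; only finitely many are nonempty, they partition $\Omega$, and on $\Omega_{k}$ one has $\norm{f(\omega)}_{Z}<(k+1)\delta$. Since $X$ and $Y$ are separable, the SOT is metrizable on norm-bounded subsets of $\La(X,Y)$ by a metric $d$ as in the proof of Lemma~\ref{bounded approximation}, and such subsets are SOT-separable; strong measurability of $f$ makes $\omega\mapsto d(f(\omega),S)$ measurable for every $S$. For each nonempty $\Omega_{k}$, choose a sequence $\{S_{k,j}\}_{j\in\N}\subseteq f(\Omega_{k})$ that is $d$-dense in the separable metric space $f(\Omega_{k})$; then each $S_{k,j}$ lies in $Z$ with $\norm{S_{k,j}}_{Z}<(k+1)\delta$. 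A Pettis-type partitioning argument inside each $\Omega_{k}$ (set $\Omega_{k,j}^{(n)}:=\{\omega\in\Omega_{k}:d(f(\omega),S_{k,j})<1/n\}\setminus\bigcup_{i<j}\Omega_{k,i}^{(n)}$) yields, for each $n\in\N$, a measurable partition $\{\Omega_{k,j}^{(n)}\}_{k,j}$ of $\Omega$ refining $\{\Omega_{k}\}_{k}$ with $d(f(\omega),S_{k,j})<1/n$ whenever $\omega\in\Omega_{k,j}^{(n)}$. Putting $f_{n}:=\sum_{k,j}S_{k,j}\mathbf{1}_{\Omega_{k,j}^{(n)}}$ we obtain $\sup_{n,\omega}\norm{f_{n}(\omega)}_{\La(X,Y)}<\infty$ and $f_{n}(\omega)x\to f(\omega)x$ in $Y$ for every $\omega$ and every $x$.

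Finally I would put $T_{n}:=\int_{\Omega}f_{n}\,\ud\mu\in\La(X,Y)$. The series $\sum_{k,j}S_{k,j}\,\mu(\Omega_{k,j}^{(n)})$ converges absolutely in $Z$, since
\[
\sum_{k,j}\norm{S_{k,j}}_{Z}\,\mu(\Omega_{k,j}^{(n)})\le\sum_{k}(k+1)\delta\,\mu(\Omega_{k})\le\delta\mu(\Omega)+\int_{\Omega}g\,\ud\mu=c+\delta\mu(\Omega),
\]
and its sum equals $T_{n}$ (the partial sums converge to it in $Z$, hence in $\La(X,Y)$, and also converge to $\int_{\Omega}f_{n}\,\ud\mu$ in $\La(X,Y)$ because $\mu$ is finite and $f_{n}$ is bounded). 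Thus $T_{n}\in(c+\delta\mu(\Omega))B_{Z}$. Dominated convergence gives $T_{n}x\to Tx$ in $Y$ for every $x$, i.e.\ $T_{n}\to T$ in SOT; since $z\mapsto rz$ is an SOT-homeomorphism of $\La(X,Y)$ the ball $(c+\delta\mu(\Omega))B_{Z}$ is SOT-closed, so $T\in(c+\delta\mu(\Omega))B_{Z}$. Hence $T\in Z$ with $\norm{T}_{Z}\le c+\delta\mu(\Omega)$; letting $\delta\downarrow0$ gives $\norm{T}_{Z}\le c$, and taking the infimum over $g$ completes the proof.

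The step I expect to be the main obstacle is the construction of the functions $f_{n}$: one must simultaneously approximate $f$ in SOT everywhere, keep the chosen values inside $Z$, and control their $Z$-norms by the level of $g$. This is exactly where all three hypotheses are used — separability of $X$ and $Y$ supplies the metrizable, separable SOT needed for the Pettis-type partitioning; boundedness of $f$ in $Z$ keeps the relevant balls norm-bounded in $\La(X,Y)$ and makes the approximating series converge; and SOT-closedness of $B_{Z}$ is what lets the final limit $T$ inherit the estimate.
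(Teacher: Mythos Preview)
Your proof is correct and complete; the construction of the $f_{n}$ via level sets of $g$ and explicit Pettis-type partitions works as you describe, and the passage to the limit through the SOT-closed balls $(c+\delta\mu(\Omega))B_{Z}$ is sound.

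The paper's argument proceeds along a different route. It first observes that $B_{Z}$, being SOT-closed, norm-bounded in $\La(X,Y)$, and sitting inside an SOT-separable, SOT-metrizable bounded set, is a Polish space in the strong operator topology. It then normalizes so that $f(\Omega)\subseteq B_{Z}$ and $\mu$ is a probability measure, and invokes an abstract measurability result (\cite[Propositions~1.9 and~1.10]{VTC85}) to conclude that $f$ is the $\mu$-a.e.\ SOT-limit of $B_{Z}$-valued simple functions; the limit $T$ of the simple integrals then lies in $B_{Z}$ by closedness. The sharp bound $\norm{T}_{Z}\le\int_{\Omega}g\,\ud\mu$ is recovered not by a $\delta$-discretization of the levels of $g$, but by a rescaling trick: one sets $h:=f/g$ and $\ud\nu:=(g/\!\int g\,\ud\mu)\,\ud\mu$, applies the $B_{Z}$ case to $h$ and $\nu$, and multiplies back. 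Your approach is more elementary and self-contained, essentially reproving the Pettis approximation by hand and tracking the $Z$-norm via the level sets of $g$; the paper's approach is shorter but leans on an external Polish-space measurability lemma and the change-of-measure device.
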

\begin{proof}
The proof follows that of \cite[Lemma 3.5]{ACDS09}. First we show that $B_{Z}$ is a Polish space in the strong operator topology. As in the proof of Lemma \ref{bounded approximation}, bounded subsets of $\La(X,Y)$ are SOT-metrizable. The finite rank operators are SOT-dense in $\La(X,Y)$, hence $\La(X,Y)$ is SOT-separable. Therefore $B_{Z}$ is SOT-separable and metrizable. By assumption, $B_{Z}$ is complete.

Now let $(\Omega,\mu)$ be a finite measure space and let $f:\Omega\rightarrow Z$ be bounded and strongly measurable. Without loss of generality, we may assume that $f(\Omega)\subseteq B_{Z}$ and that $\mu$ is a probability measure. For each $y^{*}\in Y^{*}$ and $x\in X$, the mapping $B_{Z}\rightarrow [0,\infty)$, $T\mapsto \abs{\langle y^{*},Tx\rangle}$ is continuous. The collection of all these mappings, for $y^{*}\in Y^{*}$ and $x\in X$, separates the points of $B_{Z}$. Moreover, $\omega\mapsto \abs{\langle y^{*},f(\omega)x\rangle}$ is a measurable mapping $\Omega\rightarrow [0,\infty)$ for each $y^{*}\in Y^{*}$ and $x\in X$. By \cite[Propositions 1.9 and 1.10]{VTC85}, $f$ is the $\mu$-almost everywhere SOT-limit of a sequence of $B_{Z}$-valued simple functions $\{f_{k}\}_{k=1}^{\infty}$. Let $T_{n}:=\int_{\Omega}f_{n}\,\ud\mu\in B_{Z}$. By the dominated convergence theorem, $T_{n}(x)\rightarrow T(x):=\int_{\Omega}f(\omega)x\,\ud\mu(\omega)$ as $n\rightarrow \infty$, for all $x\in X$. By assumption, $T\in B_{Z}$.

Now let $g:\Omega\rightarrow [0,\infty)$ be measurable such that $1\geq g(\omega)\geq \norm{f(\omega)}_{Z}$ for $\omega\in\Omega$, and define $h(\omega):=\frac{f(\omega)}{g(\omega)}$ and $\ud\nu(\omega):=\frac{g(\omega)}{\int_{\Omega}g(\eta)\,\ud\mu(\eta)}\ud\mu(\omega)$ for $\omega\in\Omega$. By what we have shown above, $x\mapsto \int_{\Omega}h(\omega)x\,\ud\nu(\omega)$ defines an element of $B_{Z}$. Since $Tx=\int_{\Omega}f(\omega)x\,\ud\mu(\omega)=\int_{\Omega}g(\omega)\,\ud\mu(\omega)\int_{\Omega}h(\omega)x\,\ud\nu(\omega)$, we obtain $\norm{T}_{Z}\leq \int_{\Omega}g(\omega)\,\ud\mu(\omega)$, as remained to be shown.
\end{proof}

\begin{remark}\label{no equivalence}
Note that the converse implication does not hold. Indeed, if $X$ is a Hilbert space (or more generally, a Banach space with the metric approximation property) then the finite rank operators of norm less than or equal to $1$ are SOT-dense in the unit ball of $\La(X)$. Therefore the compact operators of norm less than or equal to $1$ are not SOT-closed in $\La(X)$ if $X$ is infinite-dimensional. However, by \cite[Theorem 1.3]{Voigt92}, the space of compact operators on $X$ has the strong convex compactness property.
\end{remark}

Let $X$ and $Y$ be Banach spaces and $\mathcal I$ a Banach space which is continuously embedded in $\La(X,Y)$. We say that
$(\Ic, \|\cdot\|_{\Ic})$ is a \emph{Banach ideal} in $\La(X,Y)$ if
\begin{itemize}
\item For all $R\in\La(Y)$, $S\in \Ic$ and $T\in\La(X)$, $RST\in \Ic$ with $\norm{RST}_{\Ic}\leq \norm{R}_{\La(Y)}\norm{S}_{\Ic}\norm{T}_{\La(X)}$;
\item $X^{*}\!\otimes Y\subseteq \Ic$ with $\norm{x^{*}\!\otimes y}_{\Ic}=\norm{x^{*}}_{X^{*}\!}\norm{y}_{Y}$ for all $x^{*}\in X^{*}$ and $y\in Y$.
\end{itemize}
By Lemma \ref{sccp} and \cite[Proposition 17.21]{Defant-Floret93} (using that the SOT and WOT closures of a convex set coincide), for separable $X$ and $Y$, any \emph{maximal} Banach ideal (for the definition see e.g. \cite{Pietsch}) in $\La(X,Y)$ has the strong convex compactness property. This includes a large class of operator ideals, such as the ideal of absolutely $p$-summing operators, the ideal of integral operators, etc.~\cite[p.~203]{Defant-Floret93}.

\subsection{Algebras of functions}\label{algebras of functions}

In this section we discuss some algebras of functions that will be essential in later sections.

Let $\sigma_1,\sigma_2\subseteq \C$ be Borel measurable subsets and let $\mathfrak{A}(\sigma_1\times \sigma_2)$ be the class of Borel functions $\ph:\sigma_1\times \sigma_2\rightarrow \C$ such that
\begin{align}\label{function representation}
\ph(\lambda_{1},\lambda_{2})=\int_{\Omega}a_{1}(\lambda_{1},\w)a_{2}(\lambda_{2},\w)\,\ud\mu(\w)
\end{align}
for all $(\lambda_{1},\lambda_{2})\in \sigma_1\times \sigma_2$, where $(\Omega,\Sigma,\mu)$ is a finite measure space and $a_{1}\in\B(\sigma_1\times\Omega,\mathfrak B_{\sigma_1}\otimes\Sigma)$, $a_{2}\in\B(\sigma_2\times\Omega,\mathfrak B_{\sigma_2}\otimes\Sigma)$. For $\ph\in\mathfrak{A}(\sigma_1\times \sigma_2)$ let
\begin{align*}
\norm{\ph}_{\mathfrak{A}(\sigma_1\times \sigma_2)}:=\inf\int_{\Omega}\norm{a_{1}(\cdot,\w)}_{\B(\sigma_1)}\norm{a_{2}(\cdot,\w)}_{\B(\sigma_2)}\ud\mu(\w),
\end{align*}
where the infimum runs over all possible representations\footnote{This might seem problematic from a set-theoretic viewpoint. The problem can be fixed by choosing an equivalence class of such a representation for each $r\in\R$ which can occur in the infimum.} in \eqref{function representation} (it is straightforward to show that the map $\omega\mapsto \norm{a_{1}(\cdot,\w)}_{\B(\sigma_1)}\norm{a_{2}(\cdot,\w)}_{\B(\sigma_2)}$ is measurable).

\begin{lemma}\label{Banach algebra}
For all $\sigma_1,\sigma_2\subseteq \C$ measurable, $\mathfrak{A}(\sigma_1\times \sigma_2)$ is a unital Banach algebra which is contractively included in $\B(\sigma_1\times \sigma_2)$.
\end{lemma}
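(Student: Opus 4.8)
The plan is to verify the Banach algebra axioms directly from the definition of $\mathfrak{A}(\sigma_1\times\sigma_2)$ and its norm, checking contractivity into $\B(\sigma_1\times\sigma_2)$ along the way.

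First I would establish the contractive inclusion. Given $\ph\in\mathfrak{A}(\sigma_1\times\sigma_2)$ with a representation as in \eqref{function representation}, for each fixed $(\lambda_1,\lambda_2)$ we estimate $\abs{\ph(\lambda_1,\lambda_2)}\leq\int_{\Omega}\abs{a_1(\lambda_1,\w)}\,\abs{a_2(\lambda_2,\w)}\,\ud\mu(\w)\leq\int_{\Omega}\norm{a_1(\cdot,\w)}_{\B(\sigma_1)}\norm{a_2(\cdot,\w)}_{\B(\sigma_2)}\,\ud\mu(\w)$, and taking the infimum over representations gives $\norm{\ph}_{\B(\sigma_1\times\sigma_2)}\leq\norm{\ph}_{\mathfrak{A}(\sigma_1\times\sigma_2)}$. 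This also shows $\norm{\cdot}_{\mathfrak{A}}$ is finite and that $\norm{\ph}_{\mathfrak{A}}=0$ forces $\ph\equiv 0$; combined with the obvious homogeneity this makes $\norm{\cdot}_{\mathfrak{A}}$ a genuine norm once subadditivity is checked.

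Next I would check that $\mathfrak{A}(\sigma_1\times\sigma_2)$ is a vector space with a submultiplicative norm under pointwise operations. For the sum of $\ph$ and $\psi$ with representations over measure spaces $(\Omega,\Sigma,\mu)$ and $(\Omega',\Sigma',\mu')$, I take the disjoint union $\Omega\sqcup\Omega'$ with the sum measure, and the functions $a_1,a_2$ on one piece and $b_1,b_2$ on the other; this represents $\ph+\psi$ and yields $\norm{\ph+\psi}_{\mathfrak{A}}\leq\norm{\ph}_{\mathfrak{A}}+\norm{\psi}_{\mathfrak{A}}$ after taking infima. For the product, I use the product measure space $(\Omega\times\Omega',\Sigma\otimes\Sigma',\mu\otimes\mu')$ with the functions $\tilde a_1(\lambda_1,(\w,\w')):=a_1(\lambda_1,\w)b_1(\lambda_1,\w')$ and $\tilde a_2(\lambda_2,(\w,\w')):=a_2(\lambda_2,\w)b_2(\lambda_2,\w')$; by Fubini, $\int_{\Omega\times\Omega'}\tilde a_1\tilde a_2\,\ud(\mu\otimes\mu')=\ph\cdot\psi$ pointwise, and the integrand factors so that the corresponding integral equals $\big(\int_\Omega\norm{a_1(\cdot,\w)}_{\B(\sigma_1)}\norm{a_2(\cdot,\w)}_{\B(\sigma_2)}\,\ud\mu(\w)\big)\big(\int_{\Omega'}\norm{b_1(\cdot,\w')}_{\B(\sigma_1)}\norm{b_2(\cdot,\w')}_{\B(\sigma_2)}\,\ud\mu'(\w')\big)$, giving $\norm{\ph\psi}_{\mathfrak{A}}\leq\norm{\ph}_{\mathfrak{A}}\norm{\psi}_{\mathfrak{A}}$. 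The unit is the constant function $\mathbf{1}$, represented with $\Omega$ a single point of mass $1$ and $a_1=a_2=\mathbf{1}$, so $\norm{\mathbf 1}_{\mathfrak{A}}=1$.

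The remaining and genuinely substantive point is completeness. Here I would use the contractive inclusion to argue that a Cauchy sequence $\{\ph_n\}$ in $\mathfrak{A}$ converges uniformly to some $\ph\in\B(\sigma_1\times\sigma_2)$, pass to a subsequence with $\norm{\ph_{n_{k+1}}-\ph_{n_k}}_{\mathfrak{A}}\leq 2^{-k}$, choose representations of the differences $\ph_{n_{k+1}}-\ph_{n_k}$ whose integrals are at most $2^{-k}$, and then glue all these representations together over a single measure space — the disjoint union $\bigsqcup_k\Omega_k$ (together with a point carrying $\ph_{n_1}$) — whose total "mass" in the relevant sense is $\norm{\ph_{n_1}}_{\mathfrak{A}}+\sum_k 2^{-k}<\infty$. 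Absolute summability of the series of $\B(\sigma_1)\times\B(\sigma_2)$-norms lets me define $a_1,a_2$ on this union and verify via dominated convergence that $\int a_1 a_2\,\ud\mu$ equals $\ph$ pointwise, so $\ph\in\mathfrak{A}$, and that $\norm{\ph-\ph_{n_k}}_{\mathfrak{A}}\to 0$; Cauchyness then upgrades this to convergence of the full sequence. I expect the bookkeeping in this gluing construction — making the measurability of the glued functions with respect to the product $\sigma$-algebras precise, and justifying the interchange of sum and integral — to be the main obstacle, though it is routine in spirit and parallels the standard proof that projective-tensor-type norms are complete.
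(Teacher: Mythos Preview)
Your proposal is correct and follows exactly the approach the paper sketches: the vector-space and normed-algebra structure are handled via disjoint unions and product measure spaces (the paper cites \cite{Potapov-Sukochev2010} for this), and completeness is obtained by gluing representations of an absolutely summable series over a direct sum of the measure spaces. The only point to make explicit in your write-up is that, since the definition requires a \emph{finite} measure space and \emph{bounded} $a_1,a_2$, you should normalize each piece (dividing $a_j$ by $\norm{a_j(\cdot,\w)}$ and absorbing those factors into the measure) before taking the countable disjoint union.
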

\begin{proof}
That $\mathfrak{A}(\sigma_1\times \sigma_2)$ is a vector space is straightforward, and that it is normed algebra is shown in \cite[Lemma 3]{Potapov-Sukochev2010} for $\sigma_1=\sigma_2=\R$ (the proof in our setting is identical). The completeness of $\mathfrak{A}(\sigma_1\times \sigma_2)$ follows by showing that an absolutely convergent series of elements in $\mathfrak{A}(\sigma_1\times \sigma_2)$ converges in $\mathfrak{A}(\sigma_1\times \sigma_2)$. This is done by considering a direct sum of the measure spaces involved.
\end{proof}

We now state sufficient conditions for a function to belong to $\mathfrak{A}$. The first will be used in the proof of Proposition \ref{truncation estimate_inf}. Let $\W^{1,2}(\R)$ be the space of all $g\in \Ell^{2}(\R)$ with weak derivative $g'\in\Ell^{2}(\R)$, endowed with the norm $\norm{g}_{\W^{1,2}(\R)}:=\norm{g}_{\Ell^{2}(\R)}+\norm{g'}_{\Ell^{2}(\R)}$ for $g\in\W^{1,2}(\R)$.

\begin{lemma}\cite[Theorem 9]{Potapov-Sukochev2010}\label{sobolev condition}
Let $g\in\W^{1,2}(\R)$ and let
\begin{align}\label{psi_f}
\psi_g(\lambda_{1},\lambda_{2}):= \left\{\begin{array}{ll}
g(\log(\frac{\lambda_{1}}{\lambda_{2}}))&\textrm{if }\lambda_{1},\lambda_{2}>0\\
0&\textrm{otherwise}
\end{array}.\right.
\end{align}
Then $\psi_g\in\mathfrak{A}(\R^{2})$ with $\big\|\psi_g\big\|_{\mathfrak{A}(\R^{2})}\leq
\sqrt{2}\norm{g}_{\W^{1,2}(\R)}$.
\end{lemma}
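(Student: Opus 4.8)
The plan is to establish the membership $\psi_g \in \mathfrak{A}(\R^2)$ by exhibiting an explicit factorization of the form \eqref{function representation}, using the Fourier transform of $g$ as the integrating variable. First I would reduce to the relevant region: since $\psi_g$ vanishes unless $\lambda_1,\lambda_2>0$, it suffices to produce a factorization over $(0,\infty)^2$ and then extend the kernels $a_1,a_2$ by zero off $(0,\infty)$, which does not increase their sup-norms. On $(0,\infty)^2$ we have $\psi_g(\lambda_1,\lambda_2)=g(\log\lambda_1-\log\lambda_2)$, so the substitution $s_i:=\log\lambda_i$ turns the problem into factorizing $(s_1,s_2)\mapsto g(s_1-s_2)$ on $\R^2$, a difference kernel.

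The key step is the Fourier inversion identity $g(s_1-s_2)=\frac{1}{2\pi}\int_{\R}\widehat{g}(\w)\,e^{\ui\w s_1}e^{-\ui\w s_2}\,\ud\w$, which already has the bilinear shape of \eqref{function representation} with $a_1(s_1,\w):=e^{\ui\w s_1}$, $a_2(s_2,\w):=e^{-\ui\w s_2}$, and $\ud\mu(\w):=\frac{1}{2\pi}|\widehat g(\w)|\,\ud\w$ — except that $\mu$ need not be finite, since $\widehat g\in\Ell^2$ only gives $L^2$-control, not $L^1$-control, of $\widehat g$. To fix this I would absorb a weight: write $\widehat g(\w)=\frac{\widehat g(\w)(1+\ui\w)}{1+\ui\w}$ and put the factor $1+\ui\w$ into (say) $a_2$, so that $a_2(s_2,\w):=(1+\ui\w)e^{-\ui\w s_2}$ and $\ud\mu(\w):=\frac{1}{2\pi}\frac{|\widehat g(\w)|}{|1+\ui\w|}\,\ud\w$. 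Now $\mu$ is finite because, by Cauchy--Schwarz,
\begin{align*}
\int_{\R}\frac{|\widehat g(\w)|}{|1+\ui\w|}\,\ud\w\leq\Big(\int_{\R}\frac{\ud\w}{1+\w^2}\Big)^{1/2}\Big(\int_{\R}|\widehat g(\w)|^2\,\ud\w\Big)^{1/2}=\sqrt{\pi}\,\|\widehat g\|_{\Ell^2(\R)}.
\end{align*}
However, now $a_2$ is unbounded in $\w$, so this naive split fails the requirement $a_2\in\B(\sigma_2\times\Omega)$. The honest remedy, which I expect to be the technical heart of the argument, is to split the weight symmetrically and use that $|1+\ui\w|\geq\max(1,|\w|)/\sqrt2$ is comparable to $(1+|\w|)$: one writes $a_1(s_1,\w):=(1+\w^2)^{1/4}e^{\ui\w s_1}$, $a_2(s_2,\w):=(1+\w^2)^{1/4}e^{-\ui\w s_2}$, and $\ud\mu(\w):=\frac{1}{2\pi}\frac{|\widehat g(\w)|}{(1+\w^2)^{1/2}}\,\ud\w$ — but these $a_i$ are still unbounded. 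The correct device is instead to absorb the polynomial weight into the \emph{kernel of $g$ itself} rather than the exponentials: since $g\in\W^{1,2}(\R)$ means $(1+\ui\w)\widehat g(\w)=\widehat g(\w)+\widehat{g'}(\w)\in\Ell^2(\R)$ with $\|(1+\ui\w)\widehat g\|_{\Ell^2}\leq\|\widehat g\|_{\Ell^2}+\|\widehat{g'}\|_{\Ell^2}=\sqrt{2\pi}\,\|g\|_{\W^{1,2}(\R)}$ (by Plancherel), one should factor $g(s_1-s_2)$ through the two functions $s\mapsto\int e^{\ui\w s}\phi_1(\w)\,\ud\mu(\w)$-type averages where the burden of the weight is carried by a cleverly chosen finite measure; this is precisely the content of \cite[Theorem 9]{Potapov-Sukochev2010}, whose proof I would follow. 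Concretely, take $\Omega=\R$ with $\ud\mu(\w)=\ud\w$ up to normalization, $a_1(\lambda_1,\w):=\mathbf 1_{(0,\infty)}(\lambda_1)\,\frac{e^{\ui\w\log\lambda_1}}{\sqrt{1+\ui\w}}$ and $a_2(\lambda_2,\w):=\mathbf 1_{(0,\infty)}(\lambda_2)\,e^{-\ui\w\log\lambda_2}(1+\ui\w)\widehat g(\w)$, noting $\frac{1}{\sqrt{1+\ui\w}}$ is bounded but $a_2$ is not in $\B$; so ultimately one is forced into the genuine argument of loc.\ cit., which uses a Hilbert-space valued representation and the identification of $\mathfrak A$-norms with certain operator norms.

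Rather than reproduce that, I would simply cite it: the statement is verbatim \cite[Theorem 9]{Potapov-Sukochev2010} once one identifies $\psi_g$ with the function called $\psi_f$ there (our region $\lambda_1,\lambda_2>0$ and the $\log$-ratio coordinate match exactly), and the bound $\|\psi_g\|_{\mathfrak A(\R^2)}\leq\sqrt2\,\|g\|_{\W^{1,2}(\R)}$ is the constant obtained there from Plancherel's theorem, the factor $\sqrt2$ arising from $\|\widehat g\|_{\Ell^2}^2+\|\widehat{g'}\|_{\Ell^2}^2\leq(\|g\|_{\Ell^2}+\|g'\|_{\Ell^2})^2=\|g\|_{\W^{1,2}(\R)}^2$ together with $\|(1+\w^2)^{1/2}\widehat g\|_{\Ell^2}^2=\|\widehat g\|_{\Ell^2}^2+\|\widehat{g'}\|_{\Ell^2}^2\leq\|g\|_{\W^{1,2}(\R)}^2$ and the elementary inequality $\frac{1}{\sqrt{2\pi}}\int_{\R}(1+\w^2)^{-1/2}\cdot(1+\w^2)^{1/2}|\widehat g(\w)|\cdot(\dots)$ balanced via Cauchy--Schwarz to yield $\sqrt2$. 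The main obstacle, then, is not the idea — Fourier inversion on the logarithmic coordinate — but verifying that the weight $(1+\w^2)^{\pm1/2}$ can be distributed so that both factors land in $\B(\sigma_i\times\Omega)$ while the residual measure stays finite; this is exactly where one must invoke the clever symmetric splitting of \cite[Theorem 9]{Potapov-Sukochev2010} instead of a naive one, and so the cleanest route is to state that our $\psi_g$ is a special case of their result and record the constant.
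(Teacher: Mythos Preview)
The paper gives no proof of this lemma at all: it is stated with the citation \cite[Theorem 9]{Potapov-Sukochev2010} attached and nothing more. Your eventual conclusion---to invoke that result directly---is therefore exactly what the paper does, so in that sense your proposal is correct and aligned with the paper.

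That said, the bulk of your write-up is a sequence of failed attempts (naive Fourier splits with unbounded $a_i$, asymmetric weight placements, etc.) followed by an admission that none of them work and that one must fall back on the cited reference. For a lemma that the paper treats as a black-box citation, this exploratory narrative is unnecessary; a one-line reference suffices. If you did want to include a self-contained argument, you would need to actually carry out a working factorization rather than list approaches that fail---and your discussion of where the $\sqrt{2}$ comes from is somewhat muddled (several inequalities are strung together without a clear final computation).
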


The second condition involves the Besov space $\Br^{1}_{\infty,1}(\R)$. Following \cite{Peller1990}, let $\{\psi_{k}\}_{k\in\Z}$ be a sequence of Schwartz functions on $\R$ such that, for each $k\in\Z$, the Fourier transform $\F\psi_{k}$ of $\psi_{k}$ is supported on $[2^{k-1},2^{k+1}]$ and $\F\psi_{k+1}(x)=\F\psi_{k}(2x)$ for all $x>0$, and such that $\sum_{k=-\infty}^{\infty}\F\psi_{k}(x)=1$ for all $x>0$. Let $\psi_{k}^{*}$ be defined by $\F\psi_{k}^{*}(x)=\F\psi_{k}(-x)$ for $k\in\Z$ and $x\in\R$. If $f$ is a distribution on $\R$ such that $\{2^{k}\norm{f\ast \psi_{k}}_{\Ell^{\infty}\!(\R)}\}_{k\in\Z}\in \ell_1(\Z)$ and $\{2^{k}\norm{f\ast \psi^{*}_{k}}_{\Ell^{\infty}\!(\R)}\}_{k\in\Z}\in \ell_1(\Z)$, then $f$ admits a representation
\begin{align}\label{besov representation}
f=\sum_{k\in\Z}f\ast\psi_{k}+\sum_{k\in\Z}f\ast\psi^{*}_{k}+P,
\end{align}
where $P$ is a polynomial.

We let the \emph{homogeneous Besov space} $\Br^{1}_{\infty,1}(\R)$ consist of all distributions as above for which $P=0$. Then $\Br^{1}_{\infty,1}(\R)$ is a Banach space when equipped with the norm
\begin{align*}
\norm{f}_{\Br^{1}_{\infty,1}(\R)}:=\sum_{k=-\infty}^{\infty}2^{k}\norm{f\ast \psi_{k}}_{\Ell^{\infty}\!(\R)}+\sum_{k=-\infty}^{\infty}2^{k}\norm{f\ast \psi^{*}_{k}}_{\Ell^{\infty}\!(\R)}\qquad(f\in \Br^{1}_{\infty,1}(\R)).
\end{align*}
For $f\in\Br^{1}_{\infty,1}(\R)$ define $\psi_{f}:\C^{2}\rightarrow \C$ by

\begin{align*}
\psi_{f}(\lambda_{1},\lambda_{2}):=
\left\{\begin{array}{ll}
\frac{f(\lambda_{2})-f(\lambda_{1})}{\lambda_{2}-\lambda_{1}}&\textrm{if }(\lambda_{1},\lambda_{2})\in\R^{2}\textrm{ and }\lambda_{1}\neq\lambda_{2}\\
f'(\lambda_{1})&\textrm{if }\lambda_{1}=\lambda_{2}\in\R
\end{array}.\right.
\end{align*}

\begin{lemma}\label{besov condition}
There exists a constant $C\geq 0$ such that $\psi_{f}\in\mathfrak{A}(\R^{2})$ for each $f\in\Br^{1}_{\infty,1}(\R)$, with $\norm{\psi_{f}}_{\mathfrak{A}(\R^{2})}\leq C\norm{f}_{\Br^{1}_{\infty,1}(\R)}$.
\end{lemma}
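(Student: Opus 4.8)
The plan is to prove Lemma \ref{besov condition} by decomposing $f\in\Br^{1}_{\infty,1}(\R)$ using its Littlewood--Paley representation \eqref{besov representation} (with $P=0$), so that $f=\sum_{k\in\Z}(f\ast\psi_{k}+f\ast\psi^{*}_{k})$, and estimating the $\mathfrak{A}(\R^{2})$-norm of each divided-difference block $\psi_{f\ast\psi_{k}}$ separately. Since $\mathfrak{A}(\R^{2})$ is a Banach space (Lemma \ref{Banach algebra}) and $\ph\mapsto\psi_{\ph}$ is linear in $\ph$ where it is defined, it suffices to show $\sum_{k}\norm{\psi_{f\ast\psi_{k}}}_{\mathfrak{A}(\R^{2})}<\infty$ with the right bound, i.e. to produce a constant $c$ with $\norm{\psi_{g_k}}_{\mathfrak{A}(\R^{2})}\leq c\,2^{k}\norm{f\ast\psi_{k}}_{\Ell^{\infty}(\R)}$ for each band-limited piece $g_k:=f\ast\psi_k$, and symmetrically for the $\psi_k^{*}$ pieces; summing then gives the constant $C$ times $\norm{f}_{\Br^{1}_{\infty,1}(\R)}$.

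The core estimate is for a single function $g$ whose Fourier transform is supported in $[2^{k-1},2^{k+1}]$ (or the reflected interval): one needs $\norm{\psi_{g}}_{\mathfrak{A}(\R^{2})}\leq c\,2^{k}\norm{g}_{\Ell^{\infty}(\R)}$. By the scaling $\psi_{g(a\,\cdot)}(\lambda_1,\lambda_2)=a\,\psi_{g}(a\lambda_1,a\lambda_2)$ together with the fact that the $\mathfrak{A}(\R^2)$-norm is invariant under the substitution $(\lambda_1,\lambda_2)\mapsto(a\lambda_1,a\lambda_2)$ (it only relabels the variables in representation \eqref{function representation}), one reduces to the model case $k=0$, i.e. $\F g$ supported in $[1/2,2]$, where the claim becomes $\norm{\psi_{g}}_{\mathfrak{A}(\R^{2})}\leq c\,\norm{g}_{\infty}$. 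For such $g$ one writes the divided difference as an integral of the derivative,
\begin{align*}
\psi_{g}(\lambda_1,\lambda_2)=\int_{0}^{1}g'(\lambda_1+t(\lambda_2-\lambda_1))\,\ud t,
\end{align*}
and then expresses $g'$ via the Fourier inversion formula restricted to the compact frequency band: $g'(\xi)=\int \ui s\,\widehat{g}(s)\ue^{\ui s\xi}\,\frac{\ud s}{2\pi}$ with $s$ ranging over $[1/2,2]$. Substituting $\xi=\lambda_1+t(\lambda_2-\lambda_1)$ factorizes the exponential as $\ue^{\ui s(1-t)\lambda_1}\ue^{\ui st\lambda_2}$, which exhibits $\psi_{g}$ in exactly the form \eqref{function representation} with $\Omega=[0,1]\times[1/2,2]$, $a_1(\lambda_1,(t,s))=\ue^{\ui s(1-t)\lambda_1}$, $a_2(\lambda_2,(t,s))=\ue^{\ui st\lambda_2}$ (both of sup-norm $1$ in $\lambda_j$), and $\ud\mu(t,s)=\frac{\ui s}{2\pi}\widehat{g}(s)\,\ud t\,\ud s$; the total mass of $\abs{\mu}$ is bounded by $c\sup_{s}\abs{\widehat{g}(s)}\leq c'\norm{g}_{\infty}$ since $\widehat{g}=\widehat{g}\cdot\F\Phi$ for a fixed Schwartz $\Phi$ equal to $1$ on $[1/2,2]$, whence $\abs{\widehat{g}(s)}\leq\norm{g}_{\infty}\norm{\Phi}_{\Ell^{1}}$.

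I would then need to handle bookkeeping details: measurability of $a_1,a_2$ on the product $\sigma$-algebra (clear, since they are continuous), the fact that we may extend $\psi_{g}$ arbitrarily off $\R^2$ to get a function on $\C^2$ as required by the statement (setting it to zero off $\R^2$, consistent with the convention in Section \ref{notation}), and convergence of $\sum_k\psi_{g_k}$ to $\psi_f$ pointwise on the diagonal-free part of $\R^2$ and at the diagonal, which follows because $\sum_k g_k=f$ converges with derivatives in a strong enough sense on $\Br^{1}_{\infty,1}$. The main obstacle is really just this last convergence/identification point — making sure that the sum of the blockwise divided differences genuinely reproduces $\psi_f$ (including at $\lambda_1=\lambda_2$, where the value is $f'(\lambda_1)=\sum_k g_k'(\lambda_1)$), rather than differing from it by something supported on the diagonal; this is where one uses that membership in $\Br^{1}_{\infty,1}(\R)$ forces $f$ to be a genuine continuously differentiable function (indeed $f'$ is bounded and continuous) with $f=\sum_k g_k$, $f'=\sum_k g_k'$ uniformly. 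Everything else is the routine Banach-algebra estimate already packaged in Lemma \ref{Banach algebra}, and the scaling reduction is immediate.
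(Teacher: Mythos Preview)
Your overall architecture---Littlewood--Paley decomposition, scaling to the unit band, and an $\mathfrak{A}(\R^{2})$ estimate for a single band-limited piece---is exactly Peller's strategy; the paper does not reproduce this argument but simply cites \cite[Theorem~2]{Peller1990} for the representation and then normalizes the integrand to fit the definition of~$\mathfrak{A}$. So strategically you are on the same track as the source the paper invokes.

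However, the crucial step in your band-limited estimate is not correct as written. You represent $\psi_{g}$ via $\ud\mu(t,s)=\tfrac{\ui s}{2\pi}\widehat{g}(s)\,\ud t\,\ud s$ on $[0,1]\times[1/2,2]$ and then claim $\sup_{s}\abs{\widehat{g}(s)}\leq c'\norm{g}_{\infty}$ from the identity $\widehat{g}=\widehat{g}\cdot\F\Phi$. That identity is tautological and gives no pointwise control: for $g\in\Ell^{\infty}(\R)$ with spectrum in $[1/2,2]$, the Fourier transform $\widehat{g}$ is a priori only a compactly supported tempered distribution, and there is \emph{no} uniform bound $\norm{\widehat{g}}_{\mathrm{TV}}\leq C\norm{g}_{\infty}$. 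Concretely, take Rudin--Shapiro signs $\epsilon_{n}\in\{\pm 1\}$ and set $g_{N}(x)=\sum_{n=1}^{N}\epsilon_{n}\ue^{\ui(1+n/N^{2})x}$; then $\mathrm{spec}(g_{N})\subset[1,2]$, $\norm{g_{N}}_{\infty}\leq C\sqrt{N}$, but $\widehat{g_{N}}$ is a sum of $N$ unit point masses with total variation $N$. Hence your measure $\mu$ need not be finite, and the asserted bound $\norm{\psi_{g}}_{\mathfrak{A}}\leq c\norm{g}_{\infty}$ does not follow from the representation you wrote down.

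Peller's actual argument avoids pointwise Fourier inversion of $g$. For a function $g$ with spectrum in $[0,\sigma]$ he produces an integral representation of $\psi_{g}$ in which the $\lambda_{1}$-factor is a \emph{translate of $g$ itself} (hence bounded by $\norm{g}_{\infty}$) and the $\lambda_{2}$-factor and the measure come from a fixed $\Ell^{1}$ kernel built out of the band-limiting window; no integrability of $\widehat{g}$ is needed. If you want a self-contained proof rather than the citation the paper gives, you should replace your Fourier-inversion step by this convolution-type representation (see \cite[Theorem~2]{Peller1990} or \cite[p.~535]{Peller06}); the rest of your outline---scaling, summation over $k$, and the diagonal identification $f'=\sum_{k}g_{k}'$---then goes through.
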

\begin{proof}
Let $f\in\Br^{1}_{\infty,1}(\R)$. In \cite[Theorem 2]{Peller1990} (see also \cite[p.~535]{Peller06}) it is shown that $\psi_{f}$ has a representation
\begin{align*}
\psi_{f}(\lambda_{1},\lambda_{2})=\int_{\Omega}a_{1}(\lambda_{1},\omega)a_{2}(\lambda_{2},\omega)\,\ud\mu(\w)
\end{align*}
for $(\lambda_{1},\lambda_{2})\in\R^{2}$, where $(\Omega,\mu)$ is a measure space and $a_{1}$ and $a_{2}$ are measurable functions on $\R\times\Omega$ such that
\begin{align*}
\int_{\Omega}\norm{a_{1}(\cdot,\w)}_{\infty}\norm{a_{2}(\cdot,\w)}_{\infty}\ud\abs{\mu}(\w)\leq C\norm{f}_{\Br^{1}_{\infty,1}(\R)}
\end{align*}
for some constant $C\geq 0$ independent of $f$. The desired conclusion now follows by replacing $a_{1}(\lambda_{1},\w)$ and $a_{2}(\lambda_{2},\w)$ by $\frac{a_{1}(\lambda_{1},\w)}{\norm{a_{1}(\cdot,\w)}_{\infty}}$ respectively $\frac{a_{2}(\lambda_{2},\w)}{\norm{a_{2}(\cdot,\w)}_{\infty}}$, and $\ud\mu(\w)$ by $\norm{a_{1}(\cdot,\w)}_{\infty}\norm{a_{2}(\cdot,\w)}_{\infty}\ud\mu(\omega)$.
\end{proof}

In \cite[Theorem 3]{Peller1990} Peller also states a condition on a function $f$ on $\R$ which is necessary in order for $\ph_{f}\in\mathfrak{A}(\R^{2})$ to hold, and this condition is only slightly weaker than $f\in \Br^{1}_{\infty,1}(\R)$.

\section{Double operator integrals and Lipschitz estimates}\label{double operator integrals and Lipschitz estimates}

\subsection{Double operator integrals}\label{double operator integrals}

Fix Banach spaces $X$ and $Y$, scalar type operators $A\in\scal(X)$ and $B\in\scal(Y)$ with spectral measures $E$ respectively $F$, and $\ph\in\A({\rm sp}(A)\times{\rm sp}(B))$. Let a representation \eqref{function representation} for $\ph$ be given, with corresponding $(\Omega,\mu)$ and $a_{1}\in\B({\rm sp}(A)\times\Omega)$, $a_{2}\in\B({\rm sp}(B)\times\Omega)$. For $\omega\in\Omega$, let $a_{1}(A,\w):=a_{1}(\cdot,\w)(A)\in\La(X)$ and $a_{2}(B,\w):=a_{2}(\cdot,\w)(B)\in\La(Y)$ be defined by the functional calculus for $A$ respectively $B$ from Section \ref{scalar type operators}.

\begin{lemma}\label{measurability}
Let $S\in \La(X,Y)$ have separable range. Then, for each $x\in X$, $\w\mapsto a_{2}(B,\w)S a_{1}(A,\w)x$ is a weakly measurable map $\Omega\rightarrow Y$.
\end{lemma}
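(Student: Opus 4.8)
The plan is to reduce the weak measurability of $\w\mapsto a_{2}(B,\w)Sa_{1}(A,\w)x$ to the measurability of scalar-valued functions obtained by pairing with functionals in $Y^{*}$, and then to handle those scalar functions by a bootstrapping argument from simple functions. Fix $x\in X$ and $y^{*}\in Y^{*}$; I must show $\w\mapsto \langle y^{*},a_{2}(B,\w)Sa_{1}(A,\w)x\rangle$ is $\mu$-measurable. The natural first step is to use the representation \eqref{scalar measures} of the functional calculus as an integral against a scalar measure. Writing $z:=Sa_{1}(A,\w)x\in Y$ (which depends on $\w$), we have $\langle y^{*},a_{2}(B,\w)z\rangle=\int_{{\rm sp}(B)}a_{2}(\lambda_{2},\w)\,\ud\langle y^{*},F(\lambda_{2})z\rangle$, and similarly $\langle (S^{*}x'), a_{1}(A,\w)x\rangle=\int_{{\rm sp}(A)}a_{1}(\lambda_{1},\w)\,\ud\langle S^{*}x', E(\lambda_{1})x\rangle$ for suitable $x'$; iterating should express the whole pairing as a double integral
\begin{align*}
\langle y^{*},a_{2}(B,\w)Sa_{1}(A,\w)x\rangle=\int_{{\rm sp}(A)}\int_{{\rm sp}(B)}a_{1}(\lambda_{1},\w)a_{2}(\lambda_{2},\w)\,\ud\nu_{y^{*},x}(\lambda_{1},\lambda_{2})
\end{align*}
against a fixed complex measure $\nu_{y^{*},x}$ on ${\rm sp}(A)\times{\rm sp}(B)$ built from $E$, $F$ and $S$ (concretely, $\ud\langle y^{*}, F(\lambda_{2})S E(\lambda_{1})x\rangle$, which makes sense since $\sigma\mapsto F(\sigma_{2})SE(\sigma_{1})$ is, for fixed $x$ and $y^{*}$, countably additive and of bounded variation by the uniform bound \eqref{spectral constant} on $E$ and $F$).

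With that identity in hand, the second step is routine measure theory: first verify the claim when $a_{1}$ and $a_{2}$ are simple functions (indicator functions of measurable rectangles in ${\rm sp}(A)\times\Omega$ and ${\rm sp}(B)\times\Omega$ respectively), where the $\w$-dependence of the integral is visibly a finite sum of terms $\mathbf{1}_{\tau_{1}}(\w)\mathbf{1}_{\tau_{2}}(\w)\nu_{y^{*},x}(\sigma_{1}\times\sigma_{2})$, hence measurable in $\w$; then pass to general $a_{1}\in\B({\rm sp}(A)\times\Omega,\B_{{\rm sp}(A)}\otimes\Sigma)$ and $a_{2}\in\B({\rm sp}(B)\times\Omega,\B_{{\rm sp}(B)}\otimes\Sigma)$ by uniform approximation by simple functions. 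The uniform bounds \eqref{norm bound for Borel functions} on the functional calculi, namely $\norm{a_{i}(\cdot,\w)(\cdot)}\leq 4\nu(\cdot)\norm{a_{i}(\cdot,\w)}_{\infty}$, guarantee that approximation in $\B$-norm of $a_{1},a_{2}$ transfers to operator-norm (hence pointwise) convergence of $a_{1}(A,\w)$ and $a_{2}(B,\w)$, uniformly in $\w$, so that $\w\mapsto\langle y^{*},a_{2}(B,\w)Sa_{1}(A,\w)x\rangle$ is a uniform limit of measurable scalar functions and therefore measurable. Finally, since $S$ has separable range, $a_{2}(B,\w)Sa_{1}(A,\w)x$ lies in the closed subspace $\overline{\mathrm{span}}\,F(\B_{{\rm sp}(B)})S(X)$, which is separable (using that $F$ is strongly $\sigma$-additive); by the Pettis Measurability Theorem weak measurability plus essential separable-valuedness give $\mu$-measurability, matching the terminology fixed in Section \ref{notation}.

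The main obstacle I anticipate is making the double-integral identity rigorous: one has to justify interchanging the functional calculus integral for $a_{2}(B,\w)$ with the action of $S$ and with the functional calculus integral for $a_{1}(A,\w)$, i.e.\ produce the genuine product measure / iterated-integral representation against $\ud\langle y^{*},F(\cdot)SE(\cdot)x\rangle$. For simple $a_{1},a_{2}$ this is just a finite computation using $a_{i}(A,\w)=\sum_{j}\alpha_{j}(\w)E(\sigma_{j})$ and the multiplicativity of $E$ (and similarly for $F$), so the cleanest route is probably to avoid Fubini-type arguments altogether: prove the measurability assertion directly for simple $a_{1},a_{2}$ by this finite computation, and then obtain the general case purely by the uniform approximation described above, never needing the double-integral formula in closed form. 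I would present the argument in that order to sidestep the delicate interchange of integrations.
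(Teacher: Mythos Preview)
Your preferred route (the third paragraph) contains a genuine gap: you propose to verify the claim for indicator functions of measurable \emph{rectangles} and then pass to general $a_{1},a_{2}$ by uniform approximation. But finite linear combinations of rectangle-indicators are not uniformly dense in $\B({\rm sp}(A)\times\Omega,\mathfrak{B}_{{\rm sp}(A)}\otimes\Sigma)$; for instance, the indicator of the diagonal in $[0,1]^{2}$ is at sup-distance at least $\tfrac12$ from every such function, since any finite combination of rectangle-indicators is constant on the atoms of a finite grid and every non-degenerate rectangle meets both the diagonal and its complement. Uniform approximation only takes you from \emph{general} simple functions (indicators of arbitrary sets in the product $\sigma$-algebra) to bounded measurable functions, and you have not bridged the gap from rectangles to arbitrary indicators. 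This can be repaired by a functional monotone class argument, or by actually carrying out your Option~A (the double integral against $\nu_{y^{*},x}$ together with Fubini), but the ``clean'' route you end up recommending does not go through as written.

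The paper's proof proceeds differently and uses the separable-range hypothesis at an earlier and more essential point than you do. It first shows, for $a_{1}=\mathbf{1}_{\sigma}$ with $\sigma$ an \emph{arbitrary} set in $\mathfrak{B}_{{\rm sp}(A)}\otimes\Sigma$, that $\omega\mapsto\langle x^{*},a_{1}(A,\omega)x\rangle=\int\mathbf{1}_{\sigma}(\lambda,\omega)\,\ud\langle x^{*},E(\lambda)x\rangle$ is measurable (Fubini for the scalar measure $\langle x^{*},E(\cdot)x\rangle$). Then---and this is where separability of $S(X)$ enters---Pettis upgrades the weak measurability of $Sa_{1}(A,\cdot)x$ to $\mu$-measurability, so that $Sa_{1}(A,\cdot)x$ is an a.e.\ limit of $Y$-valued simple functions $g_{n}$. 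Composing each $g_{n}$ with $a_{2}(B,\omega)$ (for indicator $a_{2}$) is then elementary, and passing to the limit gives weak measurability of $a_{2}(B,\cdot)Sa_{1}(A,\cdot)x$; linearity and uniform approximation from general simple functions finish the argument. In your outline the separable-range hypothesis appears only at the very end, to upgrade the conclusion to $\mu$-measurability---but the lemma only asserts weak measurability, and in the paper the hypothesis is doing real work in the middle of the argument, precisely to glue the two halves $a_{1}(A,\omega)$ and $a_{2}(B,\omega)$ together.
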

\begin{proof}
Fix $x\in X$. If $a_{1}=\mathbf{1}_{\sigma}$ for some $\sigma\subseteq {\rm sp}(A)\times \Omega$ then it is straightforward to show that $\langle x^{*},a_{1}(A,\cdot)x\rangle$ is measurable for each $x^{*}\in X^{*}$. As $S$ has separable range, $Sa_{1}(A,\cdot)x$ is $\mu$-measurable. If $a_{2}$ is an indicator function as well, the same argument shows that $a_{2}(B,\cdot)y$ is weakly measurable for each $y\in Y$. General arguments, approximating $Sa_{1}(A,\cdot)x$ by simple functions, show that $a_{2}(B,\cdot)Sa_{1}(A,\cdot)x$ is weakly measurable. By linearity this extends to simple $a_{1}$ and $a_{2}$, and for general $a_{1}$ and $a_{2}$ let $\{f_{k}\}_{k\in\N}$, $\{g_{k}\}_{k\in\N}$ be sequences of simple functions such that $a_{1}=\lim_{k\rightarrow\infty}f_{k}$ and $a_{2}=\lim_{k\rightarrow\infty}g_{k}$ uniformly. Then $a_{1}(A,\w)=\lim_{k\rightarrow\infty}f_{k}(A)$ and $a_{2}(B,\w)=\lim_{k\rightarrow\infty}g_{k}(B)$ in the operator norm, for each $\w\in\Omega$. The desired measurability now follows.
\end{proof}

Now suppose that $Y$ is separable, that $\Ic$ is a Banach ideal in $\La(X,Y)$ and let $S\in\La(X,Y)$. By \eqref{norm bound for Borel functions},
\begin{align}\label{operator norm inequality}
\norm{a_{2}(B,\w)S a_{1}(A,\w)}_{\Ic}\leq 16\,\funA\funB\norm{S}_{\Ic}\norm{a_{1}(\cdot,\w)}_{\B({\rm sp}(A))}\norm{a_{2}(\cdot,\w)}_{\B({\rm sp}(B))}
\end{align}
for $w\in\Omega$. Since $\mathcal{I}$ is continuously embedded in $\La(X,Y)$, by the Pettis Measurability Theorem, Lemma \ref{measurability} and \eqref{operator norm inequality} we can define the \emph{double operator integral}
\begin{align}\label{double operator integral}
T^{A,B}_{\ph}(S)x:=\int_{\Omega}
a_{2}(B,\w)S a_{1}(A,\w)x\,\ud\mu(\w)\in Y\qquad(x\in X).
\end{align}
Throughout, we will use $T_{\ph}$ for $T_{\ph}^{A,B}$ when the operators $A$ and $B$ are clear from the context.

\begin{proposition}\label{operator integral well defined}
Let $X$ and $Y$ be separable Banach spaces such that $X$ or $Y$ has the bounded approximation property, and let $A\in \La_{s}(X)$, $B\in\La_{s}(Y)$, and $\ph\in\A({\rm sp}(A)\times{\rm sp}(B))$. Let $\Ic$ be a Banach ideal in $\La(X,Y)$ with the strong convex compactness property. Then \eqref{double operator integral} defines an operator $T^{A,B}_{\ph}\in\La(\Ic)$ which is independent of the choice of representation of
$\ph$ in \eqref{function representation}, with 
\begin{align}\label{double operator estimate}
\norm{T^{A,B}_{\ph}}_{\La(\Ic)}\leq 16\,\funA\funB\norm{\ph}_{\A({\rm sp}(A)\times{\rm sp}(B))}.
\end{align}
\end{proposition}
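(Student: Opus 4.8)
The plan is to verify the three assertions in turn: that \eqref{double operator integral} defines a bounded operator $T^{A,B}_{\ph}$ on $\Ic$ with the stated norm bound, and that the construction does not depend on the chosen representation of $\ph$. First I would fix a representation \eqref{function representation} for $\ph$, with data $(\Omega,\Sigma,\mu)$, $a_{1}\in\B({\rm sp}(A)\times\Omega)$ and $a_{2}\in\B({\rm sp}(B)\times\Omega)$. For $S\in\Ic$, Lemma \ref{bounded approximation} allows me to reduce to $S$ of finite rank (hence with separable range) by a norm-bounded SOT-approximation; but more directly, since $Y$ is separable, every $S\in\La(X,Y)$ has separable range, so Lemma \ref{measurability} applies and $\w\mapsto a_{2}(B,\w)Sa_{1}(A,\w)x$ is weakly measurable $\Omega\to Y$ for each $x\in X$. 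Combined with the separability of $Y$ and the Pettis Measurability Theorem, this map is $\mu$-measurable, and the pointwise bound \eqref{operator norm inequality} (which uses \eqref{norm bound for Borel functions} twice and the ideal property of $\Ic$) shows it is Bochner integrable; hence the right-hand side of \eqref{double operator integral} is a well-defined element of $Y$ depending linearly and boundedly on $x$, so $T^{A,B}_{\ph}(S)\in\La(X,Y)$.

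Next I would show $T^{A,B}_{\ph}(S)$ actually lies in $\Ic$ with the right norm estimate, and this is where the strong convex compactness property enters. Consider the map $f\colon\Omega\to\Ic$ given by $f(\w):=a_{2}(B,\w)Sa_{1}(A,\w)$. By \eqref{operator norm inequality} it is bounded, and one checks it is strongly measurable as a map into $\Ic$ (for each $x\in X$, $\w\mapsto f(\w)x=a_{2}(B,\w)Sa_{1}(A,\w)x$ is $\mu$-measurable into $Y$ by the above, and $\Ic\hookrightarrow\La(X,Y)$ continuously). Since $T^{A,B}_{\ph}(S)x=\int_{\Omega}f(\w)x\,\ud\mu(\w)$, the strong convex compactness property of $\Ic$ gives $T^{A,B}_{\ph}(S)\in\Ic$ with
\begin{align*}
\norm{T^{A,B}_{\ph}(S)}_{\Ic}\leq\overline{\int_{\Omega}}\norm{f(\w)}_{\Ic}\,\ud\mu(\w)\leq 16\,\funA\funB\norm{S}_{\Ic}\int_{\Omega}\norm{a_{1}(\cdot,\w)}_{\B({\rm sp}(A))}\norm{a_{2}(\cdot,\w)}_{\B({\rm sp}(B))}\,\ud\mu(\w).
\end{align*}
Taking the infimum over all representations of $\ph$ yields \eqref{double operator estimate}, once independence of the representation is known.

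For independence of the representation, the standard argument is to check first that $T^{A,B}_{\ph}$ depends bilinearly on the pair $(a_{1},a_{2})$ in the obvious sense, and that for $a_{1}=\mathbf{1}_{\sigma_{1}}$, $a_{2}=\mathbf{1}_{\sigma_{2}}$ with $\sigma_{i}$ Borel and $\Omega$ a point, one has $T^{A,B}_{\mathbf{1}_{\sigma_{1}}\otimes\mathbf{1}_{\sigma_{2}}}(S)=F(\sigma_{2})SE(\sigma_{1})$, which depends only on the function $\mathbf{1}_{\sigma_{1}}\otimes\mathbf{1}_{\sigma_{2}}$ and not on how it is written. By linearity this extends to finite simple $a_{1},a_{2}$ over a finite measure space, and then a limiting argument (uniform approximation of $a_{1},a_{2}$ by simple functions, using \eqref{operator norm inequality} and dominated convergence to pass to the limit inside \eqref{double operator integral}) shows that $T^{A,B}_{\ph}(S)$ is determined by the value of the integral $\int_{\Omega}a_{1}(\lambda_{1},\w)a_{2}(\lambda_{2},\w)\,\ud\mu(\w)$, i.e.\ by $\ph$ itself, and not by the representation. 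Finally, boundedness of $T^{A,B}_{\ph}$ on all of $\Ic$ follows from the norm estimate just established, so $T^{A,B}_{\ph}\in\La(\Ic)$.

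The main obstacle I anticipate is the measurability bookkeeping needed to legitimately place $f(\w)=a_{2}(B,\w)Sa_{1}(A,\w)$ into the hypothesis of the strong convex compactness property: one must carefully match the notion of ``strongly measurable bounded $f\colon\Omega\to\Ic$'' used in Section \ref{spaces of operators} with what Lemma \ref{measurability} and the Pettis Measurability Theorem actually deliver (weak measurability of $\w\mapsto f(\w)x$ into $Y$, upgraded via separability of $Y$), together with the fact that the $\Ic$-valued essential range of $f$ need not be separable a priori. The representation-independence limiting argument is routine but must be done with the $4\funA$-type constants kept explicit so that the factor $16\,\funA\funB$ is tracked correctly throughout.
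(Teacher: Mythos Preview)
Your treatment of the norm bound via the strong convex compactness property is correct and matches the paper's argument exactly.

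The gap is in your independence-of-representation argument. You propose to approximate $a_{1},a_{2}$ uniformly by simple functions, observe that for simple data the resulting operator $\sum c_{jk}F(\sigma_{2k})SE(\sigma_{1j})$ depends only on the simple function $\ph^{(n)}=\sum c_{jk}\mathbf{1}_{\sigma_{1j}\times\sigma_{2k}}$, and then pass to the limit. The difficulty is in the last step: given a second representation $(\Omega',\mu',b_{1},b_{2})$ of the same $\ph$, your procedure produces a \emph{different} sequence of simple functions $\psi^{(m)}\to\ph$ in $\|\cdot\|_{\infty}$, and you would need that the two limits $\lim_{n}T_{\ph^{(n)}}(S)$ and $\lim_{m}T_{\psi^{(m)}}(S)$ coincide. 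This would follow if the map $\chi\mapsto T_{\chi}(S)$ on simple $\chi$ were $\|\cdot\|_{\infty}$-continuous, but for general $S\in\Ic$ it is not: that is exactly the content of Remark \ref{sharpness Peller} (bounded Borel functions need not give bounded double operator integrals on $\La(X,Y)$). So the limiting argument, as written, does not close for arbitrary $S$.

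The paper circumvents this by first reducing to rank-one $S=x^{*}\!\otimes y$, where one computes via \eqref{scalar measures} that
\[
\langle y^{*},a_{2}(B,\w)Sa_{1}(A,\w)x\rangle=\int_{{\rm sp}(B)}\int_{{\rm sp}(A)}a_{1}(\lambda,\w)a_{2}(\eta,\w)\,\ud\langle x^{*},E(\lambda)x\rangle\,\ud\langle y^{*},F(\eta)y\rangle,
\]
so that Fubini gives $\langle y^{*},T_{\ph}(S)x\rangle$ as a scalar integral of $\ph$ against a product of complex measures; this manifestly depends only on $\ph$. For general $S\in\Ic$, one then invokes Lemma \ref{bounded approximation} (this is where the bounded approximation property of $X$ or $Y$ is actually used) to write $S$ as an SOT-limit of a norm-bounded sequence of finite-rank operators and passes to the limit under the integral by dominated convergence. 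You mention Lemma \ref{bounded approximation} only to dismiss it for measurability purposes; in fact it is precisely the tool needed to push independence from finite-rank $S$ to arbitrary $S$.
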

\begin{proof}
By \eqref{operator norm inequality} and the strong convex compactness property, $T_{\ph}(S)\in\La(\Ic)$ for all $S\in \Ic$, with
\begin{align*}
\norm{T_{\ph}(S)}_{\Ic}\leq 16\,\funA\funB\norm{S}_{\Ic}\int_{\Omega}\norm{a_{1}(\cdot,\omega)}_{\B({\rm sp}(A))}\norm{a_{2}(\cdot,\omega)}_{\B({\rm sp}(B))}\ud\mu(\omega).
\end{align*}
Clearly $T_{\ph}$ is linear, hence the result follows if we establish that $T_{\ph}$ is independent of the representation of $\ph$. For this it suffices to let $\ph\equiv 0$. Now, first consider $S=x^{*}\!\otimes y$ for $x^{*}\in X^{*}$ and $y\in Y$, and let $x\in X$, $y^{*}\in Y^*$ and $w\in\Omega$.
Recall that $E$ and $F$ are the spectral measures of $A$ and $B$, respectively.
Then 
\begin{align*}
\langle y^{*},a_{2}(B,\w)S a_{1}(A,\w)x\rangle&=\int_{{\rm sp}(B)}a_{2}(\eta,\w)\,\ud \langle y^{*},F(\eta)Sa_{1}(A,\w)x\rangle\\
&=\int_{{\rm sp}(B)}a_{2}(\eta,\w)\langle x^{*},a_{1}(A,\w)x\rangle\,\ud \langle y^{*},F(\eta)y\rangle\\
&=\int_{{\rm sp}(B)}\int_{{\rm sp}(A)}a_{1}(\lambda,\w)a_{2}(\eta,\w)\ud \langle x^{*},E(\lambda)x\rangle\ud \langle y^{*},F(\eta)y\rangle
\end{align*}
by \eqref{scalar measures}. Now Fubini's Theorem and the assumption on $\ph$ yield $T_{\ph}(S)=0$. By linearity, $T_{\ph}(S)=0$ for all $S\in X^{*}\otimes Y$. By Lemma \ref{bounded approximation}, a general $S\in \Ic$ is the SOT-limit of a bounded (in $\La(X,Y)$) sequence $\{S_{n}\}_{n\in\N}\subseteq X^{*}\otimes Y$. The dominated convergence theorem shows that $T_{\ph}(S)x=\lim_{n\rightarrow\infty}T_{\ph}(S_{n})x=0$ for all $x\in X$, which implies that $T_{\ph}$ is independent of the representation of $\ph$ and concludes the proof.
\end{proof}

Note that, if $A$ and $B$ are normal operators on separable Hilbert spaces $X$ and $Y$, then \eqref{double operator estimate} improves to
\begin{align}\label{double operator estimate normal}
\norm{T^{A,B}_{\ph}}_{\La(\Ic)}\leq \norm{\ph}_{\A({\rm sp}(A)\times{\rm sp}(B))},
\end{align}
by appealing to \eqref{norm bound for Borel functions and normal operators} instead of \eqref{norm bound for Borel functions} in \eqref{operator norm inequality}. 

\begin{remark}\label{sharpness Peller}
Let $H$ be an infinite-dimensional separable Hilbert space and $\mathcal{S}_{2}$ the ideal of Hilbert-Schmidt operators on $H$. There is a natural definition (see~\cite{Birman-Solomyak03}) of a double operator integral $\mathcal{T}^{A,B}_{\ph}\in \La(\mathcal{S}_{2})$ for all $\ph\in\B(\C^{2})$ and normal operators $A,B\in\La(H)$, such that $\mathcal{T}^{A,B}_{\ph}=T^{A,B}_{\ph}$ if $\ph\in A({\rm sp}(A)\times{\rm sp}(B))$. One could wonder whether Proposition \ref{operator integral well defined} can be extended to a larger class of functions than $\A({\rm sp}(A)\times{\rm sp}(B))$ using an extension of the definition of $T_{\ph}^{A,B}$ in \eqref{double operator integral} which coincides with $\mathcal{T}^{A,B}_{\ph}$ on $\mathcal{S}_{2}$. But it follows from~\cite[Theorem 1]{Peller85} that $\mathcal{T}^{A,B}_{\ph}$ extends to a bounded operator on $\Ic=\La(H)$ if and only if $\ph\in\A({\rm sp}(A)\times{\rm sp}(B))$. Hence Proposition \ref{operator integral well defined} cannot be extended to a larger function class than $\A({\rm sp}(A)\times{\rm sp}(B))$ in general. However, for specific Banach ideals, e.g.~ideals with the UMD property, results have been obtained for larger classes of functions~\cite{PWS02, Potapov-Sukochev11}.
\end{remark}

\begin{remark}\label{no bap}
The assumption in Proposition \ref{operator integral well defined} that $X$ or $Y$ has the bounded approximation property is only used, via Lemma \ref{bounded approximation}, to ensure that each $S\in I$ is the SOT-limit of a bounded (in $\La(X,Y)$) net of finite-rank operators. Clearly this is true for general Banach spaces $X$ and $Y$ if $I$ is the closure in $\La(X,Y)$ of $X^{*}\!\otimes Y$. In \cite{Lassalle-Turco} the authors consider an assumption on $X$ and $I$, called condition $c_{\lambda}^{*}$, which guarantees that each $S\in I$ is the SOT-limit of a bounded net of finite-rank operators. It is shown in \cite{Lassalle-Turco} that this condition is strictly weaker than the bounded approximation property, for certain non-trivial ideals. In the results throughout the paper where we assume that $X$ has the bounded approximation property, one may assume instead that $X$ has satisfies condition $c_{\lambda}^{*}$ for $I$ for some $\lambda\geq 1$.

\end{remark}

\subsection{Commutator and Lipschitz estimates}\label{commutator estimates}

Let $p_{1}:,p_{2}:\C^{2}\rightarrow \C$ be the coordinate projections $p_{1}(\lambda_{1},\lambda_{2}):=\lambda_{1}$, $p_{2}(\lambda_{1},\lambda_{2}):=\lambda_{2}$ for $(\lambda_{1},\lambda_{2})\in \C^{2}$. Note that $f\circ p_{1}, f\circ p_{2}\in\A(\sigma_1\times \sigma_2)$ for all $\sigma_1,\sigma_2\subseteq \C$ Borel and $f\in\B(\sigma_1\cup \sigma_2)$. For $A$ and $B$ selfadjoint operators on a Hilbert space and $\Ic$ a non-commutative $\Ell_{p}$-space, the following lemma is \cite[Lemma 3]{Potapov-Sukochev2010}.

\begin{lemma}\label{functional calculus lemma}
Under the assumptions of Proposition \ref{operator integral well defined}, the following hold:
\begin{enumerate}
\item\label{multiplicativity} The map $\ph\mapsto T_{\ph}^{A,B}$ is a morphism $\A({\rm sp}(A)\times{\rm sp}(B))\rightarrow \La(\Ic)$ of unital Banach algebras.
\item\label{consistency} Let $f\in \B({\rm sp}(A)\cup{\rm sp}(B))$ and $S\in \La(X,Y)$. Then $T_{f\circ p_{1}}(S)=Sf(A)$ and $T_{f\circ p_{2}}(S)=f(B)S$. In particular, $T_{p_{1}}(S)=SA$ and $T_{p_{2}}(S)=BS$.
\end{enumerate}
\end{lemma}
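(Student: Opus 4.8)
The plan is to verify the two assertions separately, in both cases reducing to the case of finite-rank operators $S \in X^{*}\otimes Y$, where the double operator integral becomes an honest integral against the scalar spectral measures $\langle x^{*}, E(\cdot)x\rangle$ and $\langle y^{*}, F(\cdot)y\rangle$, and then extending by SOT-density and dominated convergence exactly as in the proof of Proposition \ref{operator integral well defined}. For part \eqref{multiplicativity}, linearity of $\ph \mapsto T_{\ph}$ is already noted, so the content is multiplicativity $T_{\ph\psi} = T_{\ph}T_{\psi}$ and unitality $T_{\mathbf{1}} = \mathrm{I}_{\Ic}$. For unitality, take the trivial representation $\mathbf{1}(\lambda_1,\lambda_2) = \mathbf{1}(\lambda_1)\cdot\mathbf{1}(\lambda_2)$ over a one-point measure space; then $a_1(A,\w) = \mathrm{I}_X$, $a_2(B,\w) = \mathrm{I}_Y$, so $T_{\mathbf{1}}(S) = S$. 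For multiplicativity, fix representations $\ph(\lambda_1,\lambda_2) = \int_{\Omega} a_1(\lambda_1,\w)a_2(\lambda_2,\w)\,\ud\mu(\w)$ and $\psi(\lambda_1,\lambda_2) = \int_{\Omega'} b_1(\lambda_1,\eta)b_2(\lambda_2,\eta)\,\ud\nu(\eta)$; then $(\ph\psi)(\lambda_1,\lambda_2) = \int_{\Omega\times\Omega'} (a_1 b_1)(\lambda_1,(\w,\eta))(a_2 b_2)(\lambda_2,(\w,\eta))\,\ud(\mu\otimes\nu)$ is a representation of $\ph\psi$, and with this representation one computes, for $S = x^{*}\otimes y$,
\begin{align*}
T_{\ph\psi}(S) = \int_{\Omega}\int_{\Omega'} a_2(B,\w)b_2(B,\eta)\,S\,b_1(A,\eta)a_1(A,\w)\,\ud\nu(\eta)\,\ud\mu(\w),
\end{align*}
using that the functional calculi for $A$ and for $B$ are each multiplicative, i.e. $b_2(B,\eta)a_2(B,\w)$ need not be reordered but $(a_2 b_2)(\cdot,(\w,\eta))(B) = a_2(B,\w)b_2(B,\eta)$ since both are functions of $B$. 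Fubini and the definitions then give $T_{\ph\psi}(S) = T_{\ph}(T_{\psi}(S))$ on finite-rank $S$, and the general case follows by SOT-approximation together with the boundedness from \eqref{double operator estimate}.

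For part \eqref{consistency}, fix $f \in \B({\rm sp}(A)\cup{\rm sp}(B))$ and consider $f\circ p_1$, which has the trivial representation $(f\circ p_1)(\lambda_1,\lambda_2) = f(\lambda_1)\cdot\mathbf{1}(\lambda_2)$ over a one-point space. With this representation $a_1(A,\w) = f(A)$ and $a_2(B,\w) = \mathrm{I}_Y$, so directly from \eqref{double operator integral} we get $T_{f\circ p_1}(S)x = Sf(A)x$ for all $x \in X$, i.e. $T_{f\circ p_1}(S) = Sf(A)$; by Proposition \ref{operator integral well defined} this is independent of the chosen representation. Symmetrically, $f\circ p_2 = \mathbf{1}\cdot (f\circ p_2)$ gives $a_1(A,\w) = \mathrm{I}_X$, $a_2(B,\w) = f(B)$, hence $T_{f\circ p_2}(S) = f(B)S$. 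Taking $f = \mathrm{id}$ (the identity function on ${\rm sp}(A)\cup{\rm sp}(B)$, which is bounded since both spectra are compact) specializes to $T_{p_1}(S) = SA$ and $T_{p_2}(S) = BS$, noting that $f(A) = A$ and $f(B) = B$ by Definition \ref{scalar type operator}.

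I expect the only genuine subtlety to be bookkeeping around the representations and the reduction to finite-rank operators: one must check that $f\circ p_1, f\circ p_2$ really lie in $\A(\sigma_1\times\sigma_2)$ (already observed in the text) so that the double operator integrals in question are defined, and one must invoke the representation-independence from Proposition \ref{operator integral well defined} to know that the convenient one-point representations compute the right operators. The extension from $S \in X^{*}\otimes Y$ to general $S \in \Ic$ in part \eqref{multiplicativity} uses Lemma \ref{bounded approximation} (SOT-density of norm-bounded finite-rank sequences, available since $X$ or $Y$ has the bounded approximation property and both are separable) plus the dominated convergence argument from the proof of Proposition \ref{operator integral well defined}; in part \eqref{consistency} no approximation is needed since the formula $T_{f\circ p_1}(S) = Sf(A)$ follows directly from the one-point representation for every $S \in \La(X,Y)$. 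The main obstacle, such as it is, is simply making the Fubini interchange in the multiplicativity computation rigorous, which is licensed by the finiteness of $\mu\otimes\nu$ together with the uniform operator-norm bound \eqref{operator norm inequality}; everything else is routine.
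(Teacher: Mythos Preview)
Your proof is correct and follows essentially the same route as the paper: for part \eqref{multiplicativity} you use the product-measure representation of $\ph\psi$ together with multiplicativity of the functional calculi for $A$ and $B$, and for part \eqref{consistency} you use the trivial one-point representation, just as the paper does.

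The one difference worth noting is that in part \eqref{multiplicativity} you take an unnecessary detour through finite-rank $S$ followed by SOT-approximation. The paper simply performs the Fubini computation directly for every $S\in\Ic$ and $x\in X$: since $T_{\ph}(S)x$ is defined pointwise as a $Y$-valued Bochner integral, one can apply Fubini for Bochner integrals to the iterated integral over $\Omega_{1}\times\Omega_{2}$ and pull the bounded operator $a_{2,1}(B,\omega_{1})$ through the inner integral, obtaining $T_{\ph_{1}\ph_{2}}(S)x=T_{\ph_{1}}(T_{\ph_{2}}(S))x$ at once. Your reduction to $S=x^{*}\otimes y$ works, but it adds a layer (checking SOT-continuity of $T_{\ph}$ on bounded sets via dominated convergence, and boundedness of $T_{\psi}(S_{n})$ in $\La(X,Y)$) that the direct argument bypasses. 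In part \eqref{consistency} you correctly observe that no approximation is needed at all, which is exactly what the paper says.
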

\begin{proof}
The structure of the proof is the same as that of \cite[Lemma 3]{Potapov-Sukochev2010}. Linearity in \eqref{multiplicativity} is straightforward. Fix $\ph_{1},\ph_{2}\in \A({\rm sp}(A)\times{\rm sp}(B))$ with representations as in \eqref{function representation}, with corresponding measure spaces $(\Omega_{j},\mu_{j})$ and bounded Borel functions $a_{1,j}\in\B({\rm sp}(A)\times\Omega_{j})$ and $a_{2,j}\in\B({\rm sp}(B)\times\Omega_{j})$ for $j\in\left\{1,2\right\}$. Then $\ph:=\ph_{1}\ph_{2}$ also has a representation as in \eqref{function representation}, with $\Omega=\Omega_{1}\times\Omega_{2}$, $\mu=\mu_{1}\times\mu_{2}$ the product measure and $a_{1}=a_{1,1}a_{1,2}$, $a_{2}=a_{2,1}a_{2,2}$. By multiplicativity of the functional calculus for $A$,
\begin{align*}
a_{1}(A,(\omega_{1},\omega_{2}))=\big(a_{1,1}(\cdot,\omega_{1})a_{1,2}(\cdot,\omega_{2})\big)(A)=a_{1,1}(A,\omega_{1})a_{1,2}(A,\omega_{2})
\end{align*}
for all $(\omega_{1},\omega_{2})\in\Omega$, and similarly for $a_{2}(B,(\omega_{1},\omega_{2}))$. Applying this to \eqref{double operator integral} yields
\begin{align*}
T_{\ph}(S)x&=\int_{\Omega}
a_{2}(B,\w)S a_{1}(A,\w)x\,\ud\mu(\w)\\
&=\int_{\Omega_{1}}a_{2,1}(B,\w_{1})T_{\ph_{2}}(S)a_{1,1}(A,\w_{1})x\,\ud\mu_{1}(\omega_{1})\\
&=T_{\ph_{1}}(T_{\ph_{2}}(S))x
\end{align*}
for all $S\in \Ic$ and $x\in X$, which proves \eqref{multiplicativity}. Part \eqref{consistency} follows from \eqref{double operator integral} and the fact that $T_{\ph}$ is independent of the representation of $\ph$.
\end{proof}

For $f:{\rm sp}(A)\cup{\rm sp}(B)\rightarrow \C$ define
\begin{align}\label{divided difference}
\ph_{f}(\lambda_{1},\lambda_{2}):=\frac{f(\lambda_{2})-f(\lambda_{1})}{\lambda_{2}-\lambda_{1}}
\end{align}
for $(\lambda_{1},\lambda_{2})\in{\rm sp}(A)\times{\rm sp}(B)$ with $\lambda_{1}\neq\lambda_{2}$.

\begin{theorem}\label{general perturbation inequality}
Let $X$ and $Y$ be separable Banach spaces such that $X$ or $Y$ has the bounded approximation property, and let $\Ic$ be a Banach ideal in $\La(X,Y)$ with the strong convex compactness property. Let $A\in\La_{s}(X)$ and $B\in\La_{s}(Y)$, and let $f\in\B({\rm sp}(A)\cup{\rm sp}(B))$ be such that $\ph_{f}$ extends to an element of $\A({\rm sp}(A)\times{\rm sp}(B))$. Then
\begin{align}\label{main inequality}
\norm{f(B)S-Sf(A)}_{\Ic}\leq 16\,\funA\funB\big\|\ph_{f}\big\|_{\A({\rm sp}(A)\times{\rm sp}(B))}\norm{BS-SA}_{\Ic}
\end{align}
for all $S\in \La(X,Y)$ such that $BS-SA\in \Ic$.

In particular, if $X=Y$ and $B-A\in \Ic$,
\begin{align*}
\norm{f(B)-f(A)}_{\Ic}\leq 16\,\funA\funB\big\|\ph_{f}\big\|_{\A({\rm sp}(A)\times{\rm sp}(B))}\norm{B-A}_{\Ic}.
\end{align*}
\end{theorem}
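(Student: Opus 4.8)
The plan is to exhibit the commutator $f(B)S-Sf(A)$ as a single double operator integral, namely $T_{\ph_{f}}^{A,B}(BS-SA)$, and then read off the estimate from the norm bound \eqref{double operator estimate}. Everything will rest on the purely algebraic identity
\begin{align*}
\ph_{f}\cdot(p_{2}-p_{1})=f\circ p_{2}-f\circ p_{1}
\end{align*}
between functions on ${\rm sp}(A)\times{\rm sp}(B)$: off the diagonal it is the definition \eqref{divided difference} of $\ph_{f}$, and on the diagonal both sides vanish. Here one first fixes an extension of $\ph_{f}$ to an element of $\A({\rm sp}(A)\times{\rm sp}(B))$, which exists by hypothesis; since the spectra of $A$ and $B$ are compact the coordinate projections $p_{1},p_{2}$, hence $p_{2}-p_{1}$, lie in $\A({\rm sp}(A)\times{\rm sp}(B))$, and $f\circ p_{1},f\circ p_{2}$ lie there as well, so the displayed equality is an identity in the Banach algebra $\A({\rm sp}(A)\times{\rm sp}(B))$.

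I would first establish the operator identity $f(B)S-Sf(A)=T_{\ph_{f}}^{A,B}(BS-SA)$ for finite-rank $S\in X^{*}\!\otimes Y$, where every operator that appears lies in $\Ic$. Applying Lemma~\ref{functional calculus lemma}\eqref{consistency}, then linearity, then the algebraic identity above, then multiplicativity (Lemma~\ref{functional calculus lemma}\eqref{multiplicativity}), and finally linearity and Lemma~\ref{functional calculus lemma}\eqref{consistency} once more, one obtains
\begin{align*}
f(B)S-Sf(A)&=T_{f\circ p_{2}}(S)-T_{f\circ p_{1}}(S)=T_{f\circ p_{2}-f\circ p_{1}}(S)=T_{\ph_{f}\cdot(p_{2}-p_{1})}(S)\\
&=T_{\ph_{f}}\big(T_{p_{2}-p_{1}}(S)\big)=T_{\ph_{f}}\big(T_{p_{2}}(S)-T_{p_{1}}(S)\big)=T_{\ph_{f}}(BS-SA).
\end{align*}

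Next I would pass from finite-rank $S$ to an arbitrary $S\in\La(X,Y)$ with $BS-SA\in\Ic$. By Lemma~\ref{bounded approximation}---this is where the separability of $X$ and the bounded approximation property of $X$ or $Y$ enter---there is a norm-bounded sequence $\{S_{n}\}_{n\in\N}\subseteq X^{*}\!\otimes Y$ converging to $S$ in the strong operator topology. Then $\{BS_{n}-S_{n}A\}_{n}$ is norm-bounded in $\La(X,Y)$ and converges to $BS-SA$ in the strong operator topology, and by dominated convergence applied to \eqref{double operator integral} one has $T_{\ph_{f}}(BS_{n}-S_{n}A)x\to T_{\ph_{f}}(BS-SA)x$ for every $x\in X$ (the integrands converge pointwise in $\omega$ and are dominated by a constant multiple of $\omega\mapsto\norm{a_{1}(\cdot,\omega)}_{\B({\rm sp}(A))}\norm{a_{2}(\cdot,\omega)}_{\B({\rm sp}(B))}\norm{x}_{X}$, which is $\mu$-integrable). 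Letting $n\to\infty$ in $f(B)S_{n}-S_{n}f(A)=T_{\ph_{f}}(BS_{n}-S_{n}A)$ therefore gives $f(B)S-Sf(A)=T_{\ph_{f}}(BS-SA)$ in $\La(X,Y)$. Since $BS-SA\in\Ic$, Proposition~\ref{operator integral well defined} now yields $T_{\ph_{f}}(BS-SA)\in\Ic$ and
\begin{align*}
\norm{f(B)S-Sf(A)}_{\Ic}&=\norm{T_{\ph_{f}}(BS-SA)}_{\Ic}\leq\norm{T_{\ph_{f}}^{A,B}}_{\La(\Ic)}\norm{BS-SA}_{\Ic}\\
&\leq16\,\funA\funB\big\|\ph_{f}\big\|_{\A({\rm sp}(A)\times{\rm sp}(B))}\norm{BS-SA}_{\Ic},
\end{align*}
which is \eqref{main inequality}; the case $X=Y$, $B-A\in\Ic$ then follows by taking $S=\mathrm{I}_{X}$, so that $BS-SA=B-A$ and $f(B)S-Sf(A)=f(B)-f(A)$.

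I do not expect any individual step to be a genuine obstacle: once the homomorphism property of $\ph\mapsto T_{\ph}^{A,B}$ (Lemma~\ref{functional calculus lemma}) and the norm bound (Proposition~\ref{operator integral well defined}) are in place, the estimate follows by a formal computation. The one point needing a little care is that the theorem permits $S\notin\Ic$---in particular $S=\mathrm{I}_{X}$ in the last assertion---so the identity $f(B)S-Sf(A)=T_{\ph_{f}}(BS-SA)$, which is immediate on $\Ic$, must be bootstrapped from the finite-rank case by the strong-operator approximation argument above, and this is exactly where the separability and bounded approximation hypotheses on $X$ and $Y$ are used.
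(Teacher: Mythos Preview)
Your proof is correct and follows essentially the same approach as the paper: establish the identity $f(B)S-Sf(A)=T_{\ph_{f}}(BS-SA)$ via the algebra homomorphism property of $\ph\mapsto T_{\ph}$ (Lemma~\ref{functional calculus lemma}) and then apply the norm bound from Proposition~\ref{operator integral well defined}. The paper writes the chain of equalities directly for $S\in\Ic$ and is terse about the case $S\in\La(X,Y)\setminus\Ic$; your additional SOT-approximation step handles this explicitly, though one could equally well just invoke Lemma~\ref{functional calculus lemma} with the ideal $\La(X,Y)$ itself.
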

\begin{proof}
Note that $(p_{2}-p_{1})\ph_{f}=f\circ p_{2}-f\circ p_{1}$. By Lemma \ref{functional calculus lemma},
\begin{align*}
f(B)S-Sf(A)&=T_{f\circ p_{2}}(S)-T_{f\circ p_{1}}(S)=T_{(p_{2}-p_{1})\ph_{f}}(S)=T_{p_{2}\ph_{f}}(S)-T_{p_{1}\ph_{f}}(S)\\
&=T_{\ph_{f}}(T_{p_{2}}(S)-T_{p_{1}}(S))=T_{\ph_{f}}(BS-SA)
\end{align*}
for each $S\in \Ic$. Proposition \ref{operator integral well defined} now concludes the proof.
\end{proof}

Letting $X$ and $Y$ be Hilbert spaces and $A$ and $B$ normal operators, we generalize results from~\cite{Birman-Solomyak03, Potapov-Sukochev2010} to all Banach ideals with the strong convex compactness property. As mentioned in Section \ref{spaces of operators}, this includes all separable ideals and the so-called maximal operator ideals, which in turn is a large class of ideals containing the absolutely $(p,q)$-summing operators, the integral operators, and more~\cite[p.~203]{Defant-Floret93}. Note that, for normal operators, we can improve the estimate in \eqref{main inequality} by appealing to \eqref{double operator estimate normal} instead of \eqref{double operator estimate}.

\begin{corollary}\label{normal operators}
Let $A\in\La(X)$ and $B\in\La(Y)$ be normal operators on separable Hilbert spaces $X$ and $Y$. Let $\Ic$ be a Banach ideal in $\La(X,Y)$ with the strong convex compactness property, and let $f\in\B({\rm sp}(A)\cup{\rm sp}(B))$ be such that $\ph_{f}$ extends to an element of $\A({\rm sp}(A)\times{\rm sp}(B))$. Then
\begin{align*}
\norm{f(B)S-Sf(A)}_{\Ic}\leq \big\|\ph_{f}\big\|_{\A({\rm sp}(A)\times{\rm sp}(B))}\norm{BS-SA}_{\Ic}
\end{align*}
for all $S\in \La(X,Y)$ such that $BS-SA\in \Ic$. In particular, if $X=Y$ and $B-A\in \Ic$,
\begin{align*}
\norm{f(B)-f(A)}_{\Ic}\leq \big\|\ph_{f}\big\|_{\A({\rm sp}(A)\times{\rm sp}(B))}\norm{B-A}_{\Ic}.
\end{align*}
\end{corollary}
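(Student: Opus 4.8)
The plan is to re-run the proof of Theorem \ref{general perturbation inequality} verbatim, keeping track of the constant, and to replace the single place where the non-contractivity of the functional calculus entered: namely, for a normal operator on a Hilbert space the Borel functional calculus is contractive, so the factor $4\,\funA$ (resp.\ $4\,\funB$) coming from \eqref{norm bound for Borel functions} can be taken to be $1$.

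First I would check that the hypotheses of Theorem \ref{general perturbation inequality} are in force. A separable Hilbert space is separable and has the metric approximation property, hence the bounded approximation property. A normal operator $A$ on a Hilbert space is a scalar type operator with $\funA=1$ (as recorded at the end of Section \ref{scalar type operators}), so $A\in\La_{s}(X)$ and likewise $B\in\La_{s}(Y)$. Thus Proposition \ref{operator integral well defined}, Lemma \ref{functional calculus lemma}, and Theorem \ref{general perturbation inequality} all apply to $A$, $B$, $\Ic$, and $\ph_{f}$; in particular, the chain of identities in the proof of Theorem \ref{general perturbation inequality} gives $f(B)S-Sf(A)=T^{A,B}_{\ph_{f}}(BS-SA)$ for every $S\in\La(X,Y)$ with $BS-SA\in\Ic$.

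It then remains to sharpen the operator-norm bound, i.e.\ to show $\norm{T^{A,B}_{\ph_{f}}}_{\La(\Ic)}\leq\big\|\ph_{f}\big\|_{\A({\rm sp}(A)\times{\rm sp}(B))}$ in place of \eqref{main inequality}. Here the observation is that the factor $16\,\funA\funB$ in \eqref{double operator estimate} enters only through the two applications of \eqref{norm bound for Borel functions} used to derive \eqref{operator norm inequality} — one bounding $a_{1}(A,\w)$ and one bounding $a_{2}(B,\w)$. For normal $A$ and $B$ one invokes instead \eqref{norm bound for Borel functions and normal operators}, obtaining $\norm{a_{1}(A,\w)}_{\La(X)}\leq\norm{a_{1}(\cdot,\w)}_{\B({\rm sp}(A))}$ and the analogous bound for $a_{2}(B,\w)$; so \eqref{operator norm inequality} holds with the constant $16\,\funA\funB$ replaced by $1$. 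Feeding this sharper pointwise estimate into the proof of Proposition \ref{operator integral well defined} yields \eqref{double operator estimate normal}, and combining it with the identity from the previous paragraph gives the asserted inequality. The ``in particular'' statement follows by taking $X=Y$ and $S=\mathrm{I}_{X}$, since then $f(B)S-Sf(A)=f(B)-f(A)$ and $BS-SA=B-A$.

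I do not expect a genuine obstacle: the argument is essentially bookkeeping, and the only point needing a little care is to confirm that \eqref{norm bound for Borel functions} is used precisely at the two indicated spots in the passage \eqref{operator norm inequality}–\eqref{double operator estimate}, so that the contractivity of the normal functional calculus propagates cleanly through Proposition \ref{operator integral well defined} and the proof of Theorem \ref{general perturbation inequality}.
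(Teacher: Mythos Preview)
Your proposal is correct and follows exactly the paper's approach: the corollary is obtained by running the proof of Theorem \ref{general perturbation inequality} and replacing the estimate \eqref{double operator estimate} by the sharper \eqref{double operator estimate normal}, which in turn comes from using \eqref{norm bound for Borel functions and normal operators} in place of \eqref{norm bound for Borel functions} inside \eqref{operator norm inequality}. Your verification of the standing hypotheses (separability, bounded approximation property, $A,B\in\scal$) is more explicit than what the paper writes, but the argument is the same.
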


Combining Theorem \ref{general perturbation inequality} with Lemma \ref{besov condition} yields the following, a generalization of~\cite[Theorem 4]{Peller1990}.

\begin{corollary}\label{besov perturbation inequality}
There exists a universal constant $C\geq 0$ such that the following holds. Let $X$ and $Y$ be separable Banach spaces such that $X$ or $Y$ has the bounded approximation property, and let $\Ic$ be a Banach ideal in $\La(X,Y)$ with the strong convex compactness property. Let $f\in\Br^{1}_{\infty,1}(\R)$, and let $A\in\La_{s}(X)$ and $B\in\La_{s}(Y)$ be such that ${\rm sp}(A)\cup{\rm sp}(B)\subseteq \R$. Then
\begin{align}\label{besov estimate}
\norm{f(B)S-Sf(A)}_{\Ic}\leq C\funA\funB\norm{f}_{\Br^{1}_{\infty,1}(\R)}\!\norm{BS-SA}_{\Ic}
\end{align}
for all $S\in \La(X,Y)$ such that $BS-SA\in \Ic$. In particular, if $X=Y$ and $B-A\in \Ic$,
\begin{align*}
\norm{f(B)-f(A)}_{\Ic}\leq C\funA\funB\norm{f}_{\Br^{1}_{\infty,1}(\R)}\!\norm{B-A}_{\Ic}.
\end{align*}
\end{corollary}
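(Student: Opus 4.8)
The plan is to deduce Corollary \ref{besov perturbation inequality} directly by combining two results already established in the excerpt: the general commutator estimate of Theorem \ref{general perturbation inequality} and the sufficient condition for membership in $\A(\R^{2})$ from Lemma \ref{besov condition}. The key observation is that the hypotheses line up almost verbatim once one checks that the divided difference function $\ph_{f}$ associated to $f$ (in the sense of \eqref{divided difference}) agrees, on ${\rm sp}(A)\times{\rm sp}(B)$, with the function $\psi_{f}$ of Lemma \ref{besov condition}.

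First I would fix $f\in\Br^{1}_{\infty,1}(\R)$ and let $C\geq 0$ be the universal constant supplied by Lemma \ref{besov condition}, so that $\psi_{f}\in\A(\R^{2})$ with $\norm{\psi_{f}}_{\A(\R^{2})}\leq C\norm{f}_{\Br^{1}_{\infty,1}(\R)}$. Since ${\rm sp}(A)\cup{\rm sp}(B)\subseteq\R$, one has ${\rm sp}(A)\times{\rm sp}(B)\subseteq\R^{2}$, and restriction is a contractive unital algebra homomorphism $\A(\R^{2})\to\A({\rm sp}(A)\times{\rm sp}(B))$ (this is immediate from the definition of the $\A$-norm as an infimum over representations \eqref{function representation}: any representation of $\psi_{f}$ on $\R^{2}$ restricts to one on ${\rm sp}(A)\times{\rm sp}(B)$). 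Hence the restriction of $\psi_{f}$ lies in $\A({\rm sp}(A)\times{\rm sp}(B))$ with norm at most $C\norm{f}_{\Br^{1}_{\infty,1}(\R)}$. On the other hand, comparing \eqref{divided difference} with the definition of $\psi_{f}$ preceding Lemma \ref{besov condition}, one sees that for $(\lambda_{1},\lambda_{2})\in\R^{2}$ with $\lambda_{1}\neq\lambda_{2}$ both are equal to $\frac{f(\lambda_{2})-f(\lambda_{1})}{\lambda_{2}-\lambda_{1}}$, so $\ph_{f}$ extends to (the restriction of) $\psi_{f}$, an element of $\A({\rm sp}(A)\times{\rm sp}(B))$. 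Thus the hypothesis of Theorem \ref{general perturbation inequality} on $f$ is satisfied, provided one also notes that $f$, being in $\Br^{1}_{\infty,1}(\R)$, is bounded on the compact set ${\rm sp}(A)\cup{\rm sp}(B)$ and hence lies in $\B({\rm sp}(A)\cup{\rm sp}(B))$.

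Then I would simply invoke Theorem \ref{general perturbation inequality}, which gives, for all $S\in\La(X,Y)$ with $BS-SA\in\Ic$,
\begin{align*}
\norm{f(B)S-Sf(A)}_{\Ic}\leq 16\,\funA\funB\norm{\ph_{f}}_{\A({\rm sp}(A)\times{\rm sp}(B))}\norm{BS-SA}_{\Ic}\leq 16C\,\funA\funB\norm{f}_{\Br^{1}_{\infty,1}(\R)}\norm{BS-SA}_{\Ic},
\end{align*}
so the claim holds with the universal constant $16C$. The special case $X=Y$, $B-A\in\Ic$ follows by taking $S=\mathrm{I}_{X}$, since then $BS-SA=B-A$ and $f(B)S-Sf(A)=f(B)-f(A)$.

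There is essentially no obstacle here: the corollary is a formal consequence of the two cited results, and the only point requiring a moment's care is the identification of $\ph_{f}$ with the restriction of $\psi_{f}$ together with the (elementary) observation that restricting the domain of functions in $\A$ is norm-nonincreasing. All the genuine work — the double operator integral machinery behind Theorem \ref{general perturbation inequality} and Peller's representation behind Lemma \ref{besov condition} — has already been done. I would therefore keep the proof to a few lines, citing Lemma \ref{besov condition} for $\psi_{f}\in\A(\R^{2})$, remarking that $\ph_{f}$ is its restriction to ${\rm sp}(A)\times{\rm sp}(B)$, and applying Theorem \ref{general perturbation inequality}.
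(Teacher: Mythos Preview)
Your proposal is correct and matches the paper's approach exactly: the paper simply states that the corollary follows by combining Theorem \ref{general perturbation inequality} with Lemma \ref{besov condition}, and your write-up supplies precisely the routine details (identifying $\ph_{f}$ with the restriction of $\psi_{f}$ and noting that restriction is contractive on $\A$) needed to make that combination rigorous.
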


In the case where the Banach ideal $\Ic$ is the space $\La(X,Y)$ of all bounded operators from $X$ to $Y$, we obtain the following corollary.

\begin{corollary}\label{besov perturbation inequality1}
There exists a universal constant $C\geq 0$ such that the following holds. Let $X$ and $Y$ be separable Banach spaces such that either $X$ or $Y$ has the bounded approximation property. Let $f\in\Br^{1}_{\infty,1}(\R)$, and let $A,B\in\La_{s}(X)$ be such that ${\rm sp}(A)\cup{\rm sp}(B)\subseteq \R$. Then
\begin{align}\label{besov estimate1}
\norm{f(B)S-Sf(A)}_{\La(X,Y)}\leq C\funA\funB\norm{f}_{\Br^{1}_{\infty,1}(\R)}\!\norm{BS-SA}_{\La(X,Y)}
\end{align}
for all $S\in\La(X,Y)$. In particular, if $X=Y$ then
\begin{align*}
\norm{f(B)-f(A)}_{\La(X)}\leq C\funA\funB\norm{f}_{\Br^{1}_{\infty,1}(\R)}\!\norm{B-A}_{\La(X)}.
\end{align*}
\end{corollary}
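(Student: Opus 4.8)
The plan is to deduce Corollary \ref{besov perturbation inequality1} directly from Corollary \ref{besov perturbation inequality} by taking $\Ic=\La(X,Y)$ there. Once we know that $\La(X,Y)$, equipped with its operator norm, is a Banach ideal in $\La(X,Y)$ possessing the strong convex compactness property, the hypotheses of Corollary \ref{besov perturbation inequality} are met (the bounded approximation assumption is common to both statements), and \eqref{besov estimate1} is exactly \eqref{besov estimate} for this choice of ideal, with the same universal constant $C$.

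Checking that $\La(X,Y)$ is a Banach ideal in itself is routine: it embeds isometrically, hence continuously, in $\La(X,Y)$; the bound $\norm{RST}_{\La(X,Y)}\le\norm{R}_{\La(Y)}\norm{S}_{\La(X,Y)}\norm{T}_{\La(X)}$ is submultiplicativity of the operator norm; and $X^{*}\!\otimes Y\subseteq\La(X,Y)$ with $\norm{x^{*}\!\otimes y}_{\La(X,Y)}=\norm{x^{*}}_{X^{*}}\norm{y}_{Y}$ is the usual norm identity for rank-one maps. For the strong convex compactness property, fix a finite measure space $(\Omega,\Sigma,\mu)$ and a bounded strongly measurable $f\colon\Omega\to\La(X,Y)$. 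For each $x\in X$ the $Y$-valued map $\omega\mapsto f(\omega)x$ is $\mu$-measurable by strong measurability of $f$, and it is bounded on a space of finite measure, hence Bochner integrable; so $Tx:=\int_{\Omega}f(\omega)x\,\ud\mu(\omega)$ defines an element of $Y$, the assignment $x\mapsto Tx$ is clearly linear, and
\begin{align*}
\norm{Tx}_{Y}\le\int_{\Omega}\norm{f(\omega)x}_{Y}\,\ud\mu(\omega)\le\norm{x}_{X}\,\overline{\int_{\Omega}}\norm{f(\omega)}_{\La(X,Y)}\,\ud\mu(\omega).
\end{align*}
Thus $T\in\La(X,Y)$ with $\norm{T}_{\La(X,Y)}\le\overline{\int_{\Omega}}\norm{f(\omega)}_{\La(X,Y)}\,\ud\mu(\omega)$, which is the required property. (The upper integral enters precisely because $\omega\mapsto\norm{f(\omega)}_{\La(X,Y)}$ need not be measurable, matching the definition in Section \ref{spaces of operators}.)

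With these facts in hand, Corollary \ref{besov perturbation inequality} applies verbatim with $\Ic=\La(X,Y)$. The side condition $BS-SA\in\Ic$ appearing there is automatic, since $\Ic=\La(X,Y)$ contains every bounded operator; hence the inequality holds for all $S\in\La(X,Y)$, and the $X=Y$ statement follows by specializing to $S=\mathrm{I}_{X}$. I do not anticipate any real obstacle: the whole argument is a specialization of an already-established result, the only point meriting a moment's attention being the role of the upper integral in the strong convex compactness property.
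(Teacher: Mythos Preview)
Your proposal is correct and is exactly the paper's approach: the corollary is stated immediately after Corollary \ref{besov perturbation inequality} as the specialization $\Ic=\La(X,Y)$, with no separate proof given. Your verification that $\La(X,Y)$ has the strong convex compactness property is a bit more detailed than what the paper spells out, but entirely in line with how the result is meant to be read.
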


\begin{remark}\label{local besov}
Corollaries \ref{besov perturbation inequality} and \ref{besov perturbation inequality1} yield global estimates, in the sense that \eqref{besov estimate} and \eqref{besov estimate1} hold for all scalar type operators $A$ and $B$ with
real spectrum, and the constant in the estimate depends on $A$ and $B$ only through their spectral constants $\funA$ and $\funB$. Local estimates follow if $f\in\B(\R)$ is contained in the Besov class locally. More precisely, given scalar type operators $A\in\La_{s}(X)$ and $B\in\La_{s}(Y)$ with real spectrum, suppose there exists $g\in\Br^{1}_{\infty,1}(\R)$ with $g(s)=f(s)$ for all $s\in{\rm sp}(A)\cup{\rm sp}(B)$. Then (with notation as in Corollary \ref{besov perturbation inequality})
\begin{align}\label{local besov inequality}
\norm{f(B)S-Sf(A)}_{\Ic}\leq C\funA\funB\norm{g}_{\Br^{1}_{\infty,1}(\R)}\!\norm{BS-SA}_{\Ic}
\end{align}
for all $S\in \La(X,Y)$ such that $BS-SA\in \Ic$. This follows directly from Theorem \ref{general perturbation inequality}.
\end{remark}

\section{Spaces with an unconditional basis}\label{unconditional basis spaces}

In this section we prove some results for specific scalar type operators, namely operators which are diagonalizable with respect to an unconditional Schauder basis. These results will be used in later sections. In this section we assume all spaces to be infinite-dimensional, but the results and proofs carry over directly to finite-dimensional spaces. This will be used in Section \ref{finite-dimensional spaces}.

\subsection{Diagonalizable operators}\label{diagonalizable operators}

Let $X$ be a Banach space with an unconditional Schauder basis $\{e_j\}_{j=1}^\infty\subseteq X$. For $j\in \mathbb N$, let $\Pp_{j}\in\La(X)$ be the projection given by $\Pp_{j}(x):=x_{j}e_{j}$ for all $x=\sum_{k=1}^\infty x_k e_k\in X$. 

\begin{assumption}\label{basis assumption}
For simplicity, we assume in this section that $\norm{\sum_{j\in N}\mathcal{P}_{j}}_{\La(X)}=1$ for all non-empty $N\subseteq \N$. This condition is satisfied in the examples we consider in later sections, and simplifies the presentation. For general bases one merely gets additional constants in the results.
\end{assumption}

An operator $A\in\La(X)$ is \emph{diagonalizable} (with respect to $\{e_j\}_{j=1}^\infty$) if there exists $U\in\La(X)$ invertible and a sequence $\{\lambda_j\}_{j=1}^\infty\in \ell_{\infty}$ of complex numbers such that
\begin{align}\label{diagonal_infinite}
UAU^{-1}x=\sum_{j=1}^{\infty}\lambda_{j}\Pp_{j}x\qquad(x\in X),
\end{align}
where the series converges since $\{e_k\}_{k=1}^\infty$ is unconditional, cf.~\cite[Lemma 16.1]{Singer1970}. In this case $A$ is a scalar type operator, with point spectrum equal to $\{\lambda_{j}\}_{j=1}^{\infty}$, $\spA=\overline{\left\{\lambda_{j}\right\}_{j=1}^{\infty}}$ and spectral measure $E$ given by
\begin{align}\label{spectral measure diagonalizable}
E(\sigma)=\sum_{\lambda_{j}\in\sigma}U^{-1}\Pp_{j}U
\end{align}
for $\sigma\subseteq \C$ Borel. The set of all diagonalizable operators on $X$ is denoted by
$\La_{\ud}(X)$. We do not explicitly mention the basis $\{e_j\}_{j=1}^\infty$ with respect to which an operator is diagonalizable, since this basis will be fixed throughout. Often we write $A\in \La_{\ud}(X, \{\lambda_j\}_{j=1}^\infty, U)$ in order to identify the operator $U$ and the sequence $\{\lambda_j\}_{j=1}^\infty$ from above. For $A\in\La_{\ud}(X, \{\lambda_j\}_{j=1}^\infty, U)$ and $f\in \B(\C)$, it follows from \eqref{functional calculus} that
\begin{align}\label{functional calculus matrices_infinite}
f(A)=U^{-1}\Big(\sum_{j=1}^{\infty}f(\lambda_{j})\Pp_{j}\Big)U.
\end{align}
Since any Banach space with a Schauder basis is separable and has the bounded approximation property, we can apply the results from the previous section to diagonalizable operators, and we obtain for instance the following.

\begin{corollary}\label{besov perturbation inequality diagonalizable}
There exists a universal constant $C\geq 0$ such that the following holds. Let $X$ and $Y$ be Banach spaces with unconditional Schauder bases, and let $\Ic$ be a Banach ideal in $\La(X,Y)$ with the strong convex compactness property. Let $f\in\Br^{1}_{\infty,1}(\R)$, and let $A\in\La_{\ud}(X)$ and $B\in\La_{\ud}(Y)$ be such that ${\rm sp}(A)\cup{\rm sp}(B)\subseteq \R$. Then
\begin{align*}
\norm{f(B)S-Sf(A)}_{\Ic}\leq C\funA\funB\norm{f}_{\Br^{1}_{\infty,1}(\R)}\!\norm{BS-SA}_{\Ic}
\end{align*}
for all $S\in \La(X,Y)$ such that $BS-SA\in \Ic$. In particular, if $X=Y$ and $B-A\in \Ic$,
\begin{align*}
\norm{f(B)-f(A)}_{\Ic}\leq C\funA\funB\norm{f}_{\Br^{1}_{\infty,1}(\R)}\!\norm{B-A}_{\Ic}.
\end{align*}
\end{corollary}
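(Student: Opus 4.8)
The plan is to deduce this corollary directly from Corollary \ref{besov perturbation inequality}, so the work consists entirely of checking that diagonalizable operators on spaces with unconditional Schauder bases fit the hypotheses there. First I would recall that any Banach space $X$ with a Schauder basis is separable, and that the partial sum projections (or rather the finite sums $\sum_{j\in N}\Pp_{j}$ under Assumption \ref{basis assumption}) furnish a norm-bounded net of finite-rank operators converging to $\mathrm{I}_{X}$ in the strong operator topology, which gives the bounded approximation property. Hence both $X$ and $Y$ are separable and have the bounded approximation property, so in particular the standing hypotheses of Corollary \ref{besov perturbation inequality} on the spaces are met.

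Next I would observe that $A\in\La_{\ud}(X)$ and $B\in\La_{\ud}(Y)$ are scalar type operators, i.e.\ $A\in\La_{s}(X)$ and $B\in\La_{s}(Y)$: this is precisely the content of the discussion around \eqref{diagonal_infinite}--\eqref{spectral measure diagonalizable}, where the spectral measure $E(\sigma)=\sum_{\lambda_{j}\in\sigma}U^{-1}\Pp_{j}U$ is exhibited and one checks $A=\int_{{\rm sp}(A)}\lambda\,\ud E(\lambda)$ from \eqref{functional calculus matrices_infinite} applied to $f(\lambda)=\lambda$. Since ${\rm sp}(A)=\overline{\{\lambda_{j}\}}$ and ${\rm sp}(B)=\overline{\{\mu_{j}\}}$ are assumed to lie in $\R$, Lemma \ref{besov condition} tells us that for $f\in\Br^{1}_{\infty,1}(\R)$ the divided difference $\ph_{f}$ restricts to an element of $\A({\rm sp}(A)\times{\rm sp}(B))$ with $\norm{\ph_{f}}_{\A({\rm sp}(A)\times{\rm sp}(B))}\leq\norm{\psi_{f}}_{\A(\R^{2})}\leq C\norm{f}_{\Br^{1}_{\infty,1}(\R)}$ for the universal constant $C$ from that lemma.

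With these checks in place, the estimate is immediate: Corollary \ref{besov perturbation inequality} (equivalently, Theorem \ref{general perturbation inequality} combined with Lemma \ref{besov condition}) applied to $A$, $B$, the ideal $\Ic$, and $f$ yields
\begin{align*}
\norm{f(B)S-Sf(A)}_{\Ic}\leq 16\,\funA\funB\norm{\ph_{f}}_{\A({\rm sp}(A)\times{\rm sp}(B))}\norm{BS-SA}_{\Ic}\leq C'\funA\funB\norm{f}_{\Br^{1}_{\infty,1}(\R)}\norm{BS-SA}_{\Ic}
\end{align*}
for all $S\in\La(X,Y)$ with $BS-SA\in\Ic$, with $C'=16C$ a universal constant; the case $X=Y$, $B-A\in\Ic$ follows by taking $S=\mathrm{I}_{X}$. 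I do not anticipate a genuine obstacle here—the statement is a packaging of earlier results—so the only thing to be careful about is bookkeeping: confirming that the ``spectral constant'' $\funA$ used in Theorem \ref{general perturbation inequality} is the same quantity $\nu(A)$ attached to the diagonal spectral measure \eqref{spectral measure diagonalizable}, and that the universal constant is genuinely independent of $X$, $Y$, $\Ic$, $A$, $B$ and $f$.
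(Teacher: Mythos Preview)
Your proposal is correct and matches the paper's approach exactly: the paper simply remarks that any Banach space with a Schauder basis is separable and has the bounded approximation property, and that diagonalizable operators are scalar type, so Corollary \ref{besov perturbation inequality} applies directly. Your write-up just spells out these verifications in slightly more detail than the paper does.
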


Since this result does not apply to the absolute value function (and neither does the more general Theorem \ref{general perturbation inequality}), and because of the importance of the absolute value function, we now study Lipschitz estimates for more general functions.

Let $Y$ be a Banach space with an unconditional Schauder basis $\{f_{k}\}_{k=1}^\infty\subseteq Y$, and let the projections $\mathcal{Q}_{k}\in\La(Y)$ be given by $\mathcal{Q}_{k}(y):=y_{k}f_{k}$ for all $y=\sum_{l=1}^\infty y_{l} f_{l}\in Y$ and $k\in\N$. Let $\lambda=\{\lambda_{j}\}_{j=1}^{\infty}$ and $\mu=\{\mu_{k}\}_{k=1}^{\infty}$ be sequences of complex numbers, and let $\ph:\C^{2}\to \C$. For $n\in\N$, define $T_{\ph,n}^{\lambda,\mu}\in\La(\La(X,Y))$ by
\begin{align}\label{double operator integral discrete1}
T^{\lambda,\mu}_{\ph,n}(S):=\sum_{j,k=1}^{n}\ph_{f}(\lambda_{j},\mu_{k})\Qq_{k}S\Pp_{j}\qquad(S\in\La(X,Y)).
\end{align}
Note that $T^{\lambda,\mu}_{\ph,n}\in\La(\Ic)$ for each Banach ideal $\Ic$ in $\La(X,Y)$. 

Let $f\in \B(\C)$ and extend the divided difference from \eqref{divided difference}, given by
\begin{align*}
\ph_{f}(\lambda_{1},\lambda_{2}):=\frac{f(\lambda_{2})-f(\lambda_{1})}{\lambda_{2}-\lambda_{1}}
\end{align*}
for $(\lambda_{1},\lambda_{2})\in\C^{2}$ with $\lambda_{1}\neq\lambda_{2}$, to a function $\ph_{f}:\C^{2}\rightarrow \C$. 

\begin{lemma}\label{write as schur multiplier}
Let $X$ and $Y$ be Banach spaces with unconditional Schauder bases, and let $\Ic$ be a Banach ideal in $\La(X,Y)$. Let $\lambda=\{\lambda_{j}\}_{j=1}^{\infty}$ and $\mu=\{\mu_{k}\}_{k=1}^{\infty}$ be sequences of complex numbers, and let $A\in\La_{\ud}(X,\lambda, U)$, $B\in\La_{\ud}(Y,\mu,V)$, $f\in\B(\C)$ and $n\in\N$. Then
\begin{align*}
\norm{f(B)S_{n}-S_{n}f(A)}_{\Ic}\leq \|U\|_{\La(X)}\|V^{-1}\|_{\La(Y)}\norm{T^{\lambda,\mu}_{\ph_{f},n}(V(BS-SA)U^{-1})}_{\Ic}
\end{align*}
for all $S\in \Ic$ such that $BS-SA\in\Ic$, where $S_{n}:=\sum_{j,k=1}^{n}V^{-1}\Qq_{k}VSU^{-1}\Pp_{j}U$.
\end{lemma}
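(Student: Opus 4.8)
The plan is to reduce the truncated commutator $f(B)S_n - S_n f(A)$ to an application of the discrete double operator integral $T^{\lambda,\mu}_{\ph_f,n}$ by a direct computation, mimicking the algebraic manipulations of Theorem~\ref{general perturbation inequality} but carried out with finite sums of the basis projections rather than with the full functional calculus. First I would compute $f(A)$ and $f(B)$ explicitly on the relevant finite-dimensional pieces. By \eqref{functional calculus matrices_infinite} we have $f(A) = U^{-1}\big(\sum_{j}f(\lambda_j)\Pp_j\big)U$, and since $S_n = \sum_{j,k=1}^n V^{-1}\Qq_k V S U^{-1}\Pp_j U$ already carries the spectral projections on both sides, the products $S_n f(A)$ and $f(B)S_n$ telescope: using $\Pp_j \Pp_{j'} = \delta_{jj'}\Pp_j$ and $U U^{-1} = \mathrm{I}$, one gets $S_n f(A) = \sum_{j,k=1}^n f(\lambda_j) V^{-1}\Qq_k V S U^{-1}\Pp_j U$ and likewise $f(B)S_n = \sum_{j,k=1}^n f(\mu_k) V^{-1}\Qq_k V S U^{-1}\Pp_j U$. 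Subtracting gives
\begin{align*}
f(B)S_n - S_n f(A) = \sum_{j,k=1}^n \big(f(\mu_k)-f(\lambda_j)\big)\, V^{-1}\Qq_k V S U^{-1}\Pp_j U.
\end{align*}

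Next I would insert the divided difference. On the indices with $\lambda_j \neq \mu_k$ we may write $f(\mu_k)-f(\lambda_j) = \ph_f(\lambda_j,\mu_k)(\mu_k - \lambda_j)$; on the diagonal indices $\lambda_j = \mu_k$ the coefficient $f(\mu_k)-f(\lambda_j)$ is zero, so those terms contribute nothing and it does no harm to keep them multiplied by $\ph_f(\lambda_j,\mu_k)\cdot 0$. Thus the whole sum equals $\sum_{j,k=1}^n \ph_f(\lambda_j,\mu_k)(\mu_k - \lambda_j) V^{-1}\Qq_k V S U^{-1}\Pp_j U$. Now I recognize the factor $(\mu_k-\lambda_j) V^{-1}\Qq_k V S U^{-1}\Pp_j U$ as coming from the truncated commutator $BS-SA$: since $B = V^{-1}(\sum_k \mu_k \Qq_k)V$ and $A = U^{-1}(\sum_j \lambda_j \Pp_j)U$, one has $\Qq_k V(BS-SA)U^{-1}\Pp_j = (\mu_k - \lambda_j)\Qq_k VSU^{-1}\Pp_j$ by the same telescoping with $\Qq_k\Qq_{k'} = \delta_{kk'}\Qq_k$ and $\Pp_j\Pp_{j'} = \delta_{jj'}\Pp_j$. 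Substituting, $f(B)S_n - S_n f(A) = \sum_{j,k=1}^n \ph_f(\lambda_j,\mu_k)\, V^{-1}\big(\Qq_k V(BS-SA)U^{-1}\Pp_j\big)U = V^{-1}\,T^{\lambda,\mu}_{\ph_f,n}\big(V(BS-SA)U^{-1}\big)\,U$ by the definition \eqref{double operator integral discrete1}.

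Finally I would take $\Ic$-norms. Since $(\Ic,\norm{\cdot}_\Ic)$ is a Banach ideal in $\La(X,Y)$, the ideal property gives $\norm{V^{-1}RU}_\Ic \leq \norm{V^{-1}}_{\La(Y)}\norm{R}_\Ic\norm{U}_{\La(X)}$ for $R \in \Ic$; applying this with $R = T^{\lambda,\mu}_{\ph_f,n}(V(BS-SA)U^{-1})$, which lies in $\Ic$ because $T^{\lambda,\mu}_{\ph_f,n} \in \La(\Ic)$ and $V(BS-SA)U^{-1}\in\Ic$ (again by the ideal property, as $BS-SA\in\Ic$), yields exactly the claimed inequality. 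I do not anticipate a genuine obstacle here: the only mild care needed is the bookkeeping around the diagonal terms $\lambda_j = \mu_k$ where $\ph_f$ was only defined off the diagonal — but since those terms are annihilated by the vanishing of $f(\mu_k)-f(\lambda_j)$, any choice of extension of $\ph_f$ to the diagonal (as in the paragraph preceding the lemma) works, and the identity $f(B)S_n - S_n f(A) = V^{-1}T^{\lambda,\mu}_{\ph_f,n}(V(BS-SA)U^{-1})U$ holds on the nose. One should also note that the hypothesis $\norm{\sum_{j\in N}\Pp_j}_{\La(X)} = 1$ from Assumption~\ref{basis assumption} is not even needed for this particular lemma, which is purely algebraic plus one ideal estimate.
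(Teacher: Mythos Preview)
Your proof is correct and follows essentially the same route as the paper's: a direct computation of $f(B)S_n-S_nf(A)$ using the projection relations and \eqref{functional calculus matrices_infinite}, insertion of the divided difference to produce the factor $(\mu_k-\lambda_j)$, recognition of $\Qq_k V(BS-SA)U^{-1}\Pp_j=(\mu_k-\lambda_j)\Qq_k VSU^{-1}\Pp_j$, and a final application of the ideal property. Your remarks on the diagonal terms and on Assumption~\ref{basis assumption} being unnecessary here are accurate and add clarity.
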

\begin{proof}
Let $S\in\Ic$ be such that $BS-SA\in\Ic$. For the duration of the proof write $P_{j}:=U^{-1}\Pp_{j}U\in\La(X)$ and $Q_{k}:=V^{-1}\Qq_{k}V\in\La(Y)$ for $j,k\in\N$. By \eqref{functional calculus matrices_infinite}, and using that $P_{j}P_{k}=0$ and $Q_{j}Q_{k}=0$ for $j\neq k$,
\begin{align*}
f(B)S_{n}-S_{n}f(A)&= \sum_{k=1}^{\infty}f(\mu_{k})Q_{k}\Big(\sum_{i,l=1}^{n}Q_{l}SP_{i}\Big)-\sum_{j=1}^{\infty}f(\lambda_{j})\Big(\sum_{i,l=1}^{n}Q_{l}SP_{i}\Big)P_{j}\\
&=\sum_{j,k=1}^{n}(f(\mu_{k})-f(\lambda_{j}))Q_{k}SP_{j}\\
&=\sum_{j,k=1}^{n}\sum_{\mu_{k}\neq \lambda_{j}}\frac{f(\mu_{k})-f(\lambda_{j})}{\mu_{k}-\lambda_{j}}(\mu_{k}Q_{k}SP_{j}-\lambda_{j}Q_{k}SP_{j})\\
&=\sum_{j,k=1}^{n}\ph_{f}(\lambda_{j},\mu_{k})Q_{k}\Big(\Big(\sum_{l=1}^{\infty}\mu_{l}Q_{l}\Big)S-S\Big(\sum_{i=1}^{\infty}\lambda_{i}P_{i}\Big)\Big)P_{j}\\
&=\sum_{j,k=1}^{n}\ph_{f}(\lambda_{j},\mu_{k})Q_{k}(BS-SA)P_{j}\\
&=V^{-1}T^{A,B}_{\ph_{f}}(V(BS-SA)U^{-1})U.
\end{align*}
where we have used that $BS-SA\in M_{n}$. Now use the ideal property of $\Ic$ to conclude the proof.
\end{proof}

For a sequence $\lambda$ of complex numbers and $A\in\La_{\ud}(X,\lambda,\mathcal{U})$, define
\begin{align}\label{definition diagonalizable constant}
\diagA:=\inf\left\{\left.\|U\|_{\La(X)}\|U^{-1}\|_{\La(X)}\right| A\in\La_{\ud}(X,\lambda,U)\right\}.
\end{align}
We will call $\diagA$ the \emph{diagonalizability constant of $A$}. Using the unconditionality of the Schauder basis of $X$ and Assumption \ref{basis assumption}, one can show that $\diagA$ does not depend on the specific ordering of the sequence $\lambda$. Since the sequence $\lambda$ is, up to ordering, uniquely determined by $A$ (it is the point spectrum of $A$), $\diagA$ only depends on $A$. Moreover, by Assumption \ref{basis assumption} and \eqref{spectral measure diagonalizable}, $\norm{E(\sigma)}_{\La(X)}\leq \norm{U^{-1}}_{\La(X)}\norm{U}_{\La(X)}$ for all $\sigma\subseteq \C$ Borel and $U\in\La(X)$ such that $A\in\La_{\ud}(X,\lambda,U)$, where $E$ is the spectral measure of $A$. Hence 
\begin{align}\label{spectral estimate}
\specA\leq \diagA,
\end{align}
where $\specA$ is the spectral constant of $A$ from Section \ref{scalar type operators}. 

We now derive commutator estimates for $A$ and $B$ in the operator norm, under a boundedness assumption which will be verified for specific $X$ and $Y$ in later sections. 

\begin{proposition}\label{commutator estimates diagonalizable}
Let $X$ and $Y$ be Banach spaces with unconditional Schauder bases, $A\in\La_{\ud}(X,\lambda, U)$, $B\in\La_{\ud}(Y,\mu,V)$ and $f\in\B(\C)$. Suppose that 
\begin{align}\label{boundedness condition}
C:=\sup_{n\in\N}\norm{T^{\lambda,\mu}_{\ph_{f},n}}_{\La(\La(X,Y))}<\infty.
\end{align}
Then
\begin{align*}
\norm{f(B)S-Sf(A)}_{\La(X,Y)}\leq C\diagA\diagB\norm{BS-SA}_{\La(X,Y)}
\end{align*}
for all $S\in\La(X,Y)$.
\end{proposition}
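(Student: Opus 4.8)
The plan is to pass to the limit $n\to\infty$ in the finite-rank estimate provided by Lemma \ref{write as schur multiplier}, using the boundedness hypothesis \eqref{boundedness condition} together with the unconditional basis structure of $X$ and $Y$. First I would fix $S\in\La(X,Y)$ and write $D:=BS-SA\in\La(X,Y)$. Applying Lemma \ref{write as schur multiplier} with $\Ic=\La(X,Y)$ gives, for each $n\in\N$,
\begin{align*}
\norm{f(B)S_{n}-S_{n}f(A)}_{\La(X,Y)}&\leq \|U\|_{\La(X)}\|V^{-1}\|_{\La(Y)}\norm{T^{\lambda,\mu}_{\ph_{f},n}(VDU^{-1})}_{\La(X,Y)}\\
&\leq C\,\|U\|_{\La(X)}\|V^{-1}\|_{\La(Y)}\,\|V\|_{\La(Y)}\|D\|_{\La(X,Y)}\|U^{-1}\|_{\La(X)},
\end{align*}
where $S_{n}=\sum_{j,k=1}^{n}V^{-1}\Qq_{k}VSU^{-1}\Pp_{j}U$ and the second line uses \eqref{boundedness condition} and the submultiplicativity of the operator norm under composition with $V$, $V^{-1}$, $U$, $U^{-1}$.

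Next I would let $n\to\infty$. Set $P^{(n)}:=U^{-1}\big(\sum_{j=1}^{n}\Pp_{j}\big)U\in\La(X)$ and $Q^{(n)}:=V^{-1}\big(\sum_{k=1}^{n}\Qq_{k}\big)V\in\La(Y)$, so that $S_{n}=Q^{(n)}SP^{(n)}$. Since $\{e_{j}\}$ and $\{f_{k}\}$ are Schauder bases, $\sum_{j=1}^{n}\Pp_{j}\to \I_{X}$ and $\sum_{k=1}^{n}\Qq_{k}\to\I_{Y}$ in SOT; hence $P^{(n)}\to\I_{X}$ and $Q^{(n)}\to\I_{Y}$ in SOT. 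Consequently, for every $x\in X$, $S_{n}x=Q^{(n)}S(P^{(n)}x)\to Sx$: indeed $P^{(n)}x\to x$, so $S(P^{(n)}x)\to Sx$ by continuity of $S$, and then $Q^{(n)}(S(P^{(n)}x))\to Sx$ because the $Q^{(n)}$ are uniformly bounded (by Assumption \ref{basis assumption}, $\|\sum_{k=1}^{n}\Qq_{k}\|_{\La(Y)}=1$, so $\|Q^{(n)}\|_{\La(Y)}\leq\|V^{-1}\|_{\La(Y)}\|V\|_{\La(Y)}$) and a standard $3\varepsilon$-argument applies. Applying $f$ via the functional calculus \eqref{functional calculus matrices_infinite}: since $f(B)$ and $f(A)$ are bounded operators, $f(B)S_{n}\to f(B)S$ and $S_{n}f(A)\to Sf(A)$ in SOT as well. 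Therefore $f(B)S_{n}-S_{n}f(A)\to f(B)S-Sf(A)$ in SOT, and the operator norm is SOT-lower semicontinuous on bounded sets, giving
\begin{align*}
\norm{f(B)S-Sf(A)}_{\La(X,Y)}\leq \liminf_{n\to\infty}\norm{f(B)S_{n}-S_{n}f(A)}_{\La(X,Y)}.
\end{align*}

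Finally I would take the infimum over all choices of $U$ with $A\in\La_{\ud}(X,\lambda,U)$ and all $V$ with $B\in\La_{\ud}(Y,\mu,V)$: the right-hand side of the displayed estimate is $C\,\|U\|_{\La(X)}\|U^{-1}\|_{\La(X)}\,\|V\|_{\La(Y)}\|V^{-1}\|_{\La(Y)}\,\|D\|_{\La(X,Y)}$, and since $S$ and $D=BS-SA$ do not depend on the choice of $U,V$, passing to the infimum yields the factor $\diagA\,\diagB$ by \eqref{definition diagonalizable constant}. This gives exactly $\norm{f(B)S-Sf(A)}_{\La(X,Y)}\leq C\diagA\diagB\norm{BS-SA}_{\La(X,Y)}$. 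The only mildly delicate point is the SOT-convergence $S_{n}x\to Sx$, which is where one must invoke the uniform boundedness of the partial-sum projections from Assumption \ref{basis assumption} rather than unconditionality alone; the rest is routine. I expect no serious obstacle, since all the analytic content is already packaged in Lemma \ref{write as schur multiplier} and the hypothesis \eqref{boundedness condition}.
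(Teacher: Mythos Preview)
Your proposal is correct and follows essentially the same route as the paper's own proof: apply Lemma \ref{write as schur multiplier} with $\Ic=\La(X,Y)$, use the hypothesis \eqref{boundedness condition} to get a uniform bound on $\norm{f(B)S_{n}-S_{n}f(A)}$, pass to the limit via the SOT-convergence $S_{n}\to S$, and then take the infimum over $U$ and $V$. Your treatment of the SOT-convergence step is more detailed than the paper's (which simply calls it ``straightforward''), and you use $\liminf$ where the paper writes $\limsup$, but both are immaterial differences.
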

\begin{proof}
Let $S\in\La(X,Y)$ and let $S_{n}\in\La(X,Y)$ be as in Lemma \ref{write as schur multiplier} for $n\in\N$. It is straightforward to show that, for each $x\in X$, $S_{n}x\to Sx$ as $n\to \infty$. Hence $f(B)S_{n}x-S_{n}f(A)x\to f(B)Sx-Sf(A)x$ as $n\to \infty$, for each $x\in X$. Lemma \ref{write as schur multiplier} and \eqref{boundedness condition} now yield
\begin{align*}
\norm{f(B)S-Sf(A)}_{\La(X,Y)}&\leq \limsup_{n\to \infty}\norm{f(B)S_{n}-S_{n}f(A)}_{\La(X,Y)}\\
&\leq C\|U\|\|V^{-1}\|\|V(BS-SA)U^{-1}\|_{\La(X,Y)}\\
&\leq C\|U\|\|U^{-1}\|\|V\|\|V^{-1}\|\,\|BS-SA\|_{\La(X,Y)}.
\end{align*}
Taking the infimum over $U$ and $V$ concludes the proof.
\end{proof}

\begin{remark}\label{other ideals}
Proposition \ref{commutator estimates diagonalizable} also holds for more general Banach ideals in $\La(X,Y)$. Indeed, let $\Ic$ be a Banach ideal in $\La(X,Y)$ with the property that, if $\{S_{m}\}_{m=1}^{\infty}\subseteq\Ic$ is an $\Ic$-bounded sequence which SOT-converges to some $S\in\La(X,Y)$ as $m\to \infty$, then $S\in\Ic$ with $\norm{S}_{\Ic}\leq \limsup_{m\to\infty}\norm{S_{m}}_{\Ic}$. With notation as in Proposition \ref{commutator estimates diagonalizable}, if
\begin{align*}
C:=\sup_{n\in\N}\norm{T^{\lambda,\mu}_{\ph_{f},n}}_{\La(\Ic)}<\infty
\end{align*}
then 
\begin{align*}
\norm{f(B)S-Sf(A)}_{\Ic}\leq C\diagA\diagB\norm{BS-SA}_{\Ic}
\end{align*}
for all $S\in\La(X,Y)$ such that $BS-SA\in\Ic$. This follows directly from the proof of Proposition \ref{commutator estimates diagonalizable}.
\end{remark}

\subsection{Estimates for the absolute value function}\label{absolute value function}

It is known that Lipschitz estimates for the absolute value function are related to estimates for so-called triangular truncation operators. For example, in \cite{Kosaki1992} and \cite{DDPS} it was shown that the boundedness of the standard triangular truncation on many operator spaces is equivalent to Lipschitz estimates for the absolute value function. We prove that, in our setting, triangular truncation operators are also related to Lipschitz estimates for $f$ the absolute value function. We do so by relating the assumption in \eqref{boundedness condition} to so-called triangular truncation operators associated to sequences. We will then bound the norms of these operators in later sections for specific $X$ and $Y$.

Let $\lambda=\{\lambda_{j}\}_{j=1}^{\infty}$ and $\mu=\{\mu_{k}\}_{k=1}^{\infty}$ be sequences of real numbers, and let $X$ and $Y$ be as before. For $n\in\N$ define $T_{\Delta,n}^{\lambda,\mu}\in\La(\La(X,Y))$ by
\begin{align}\label{sequence truncation operator}
T_{\Delta,n}^{\lambda,\mu}(S):=\sum_{j,k=1}^{n}\sum_{\mu_{k}\leq \lambda_{j}}\Qq_{k}S\Pp_{j}\qquad(S\in\La(X,Y)).
\end{align}
We call $T_{\Delta}^{A,B}$ the \emph{triangular truncation associated to $\lambda$ and $\mu$}.

For $f(t):=\abs{t}$ for $t\in\R$, define $\ph_{f}:\C^{2}\to \C$ by
\begin{align}\label{psi_f_2}
\ph_{f}(\lambda_{1},\lambda_{2}):= \left\{\begin{array}{ll}
\frac{\abs{\lambda_1}-\abs{\lambda_2}}{\lambda_1-\lambda_2}&\textrm{if }\lambda_{1}\neq\lambda_{2}\\
1&\textrm{otherwise}
\end{array}.\right.
\end{align}  
The following result relates the norm of $T_{\ph_{f},n}^{\lambda,\mu}$ to that of $T_{\Delta,n}^{\lambda,\mu}$.

\begin{proposition}\label{truncation estimate_inf}
There exists a universal constant $C\geq 0$ such that the following holds. Let $X$ and $Y$ be Banach spaces with unconditional Schauder bases and let $\Ic$ be a Banach ideal in $\La(X,Y)$ with the strong convex compactness property. Let $\lambda$ and $\mu$ be bounded sequences of real numbers. Let $f(t):=\abs{t}$ for $t\in\R$. Then
\begin{align*}
\norm{T^{\lambda,\mu}_{\ph_{f},n}(S)}_{\Ic}\leq C\Big(\|S\|_{\Ic}+\|T_{\Delta,n}^{\lambda,\mu}(S)\|_{\Ic}\Big)
\end{align*}
for all $n\in\N$ and $S\in\Ic$. In particular, if $\sup_{n\in\N}\|T_{\Delta,n}^{\lambda,\mu}(S)\|_{\La(\La(X,Y))}<\infty$ then \eqref{boundedness condition} holds.
\end{proposition}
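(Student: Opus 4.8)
The plan is to express $\ph_f$, the divided difference of the absolute value function, as a combination of the triangular truncation symbol and another symbol handled by Lemma \ref{sobolev condition}, and then invoke the double operator integral machinery. Explicitly, for $\lambda_1,\lambda_2$ real and nonzero we compute that $\frac{\abs{\lambda_1}-\abs{\lambda_2}}{\lambda_1-\lambda_2}$ equals $\pm 1$ when $\lambda_1,\lambda_2$ have the same sign, and equals $\frac{\lambda_1+\lambda_2}{\lambda_1-\lambda_2}$ when they have opposite signs. The first kind of term is, up to sign and relabelling, exactly the symbol $\mathbf 1_{\{\mu_k\leq\lambda_j\}}$ (or its complement), so the corresponding Schur multiplier is $T^{\lambda,\mu}_{\Delta,n}$ up to the harmless $\pm\mathrm{I}$ coming from the diagonal and from splitting $\mathbf 1_{\{\mu_k\leq\lambda_j\}}+\mathbf 1_{\{\mu_k>\lambda_j\}}=1$. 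The second kind of term, on the region where $\lambda_1,\lambda_2$ have opposite sign, is where Lemma \ref{sobolev condition} enters: one writes $\frac{\lambda_1+\lambda_2}{\lambda_1-\lambda_2}=\coth\!\big(\tfrac12\log(\lambda_1/(-\lambda_2))\big)$-type expression so that it has the form $\psi_g(\lambda_1,-\lambda_2)$ for a suitable $g\in\W^{1,2}(\R)$, after multiplying by a smooth cutoff that localizes away from $0$ and from $\infty$ (using that $\lambda,\mu$ are bounded). The cutoff must vanish near the diagonal ratio $1$ in $\W^{1,2}$ and decay at $\pm\infty$; this forces $g$ to be genuinely $\W^{1,2}$ rather than merely bounded, which is exactly what Lemma \ref{sobolev condition} demands.

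First I would set up the sign decomposition: let $\sigma_+=\{j: \lambda_j\geq 0\}$, $\sigma_-=\{j:\lambda_j<0\}$ and similarly $\tau_\pm$ for $\mu$, giving a $2\times 2$ block decomposition of $T^{\lambda,\mu}_{\ph_f,n}(S)$. On the $(\tau_+,\sigma_+)$ block, $\ph_f(\lambda_j,\mu_k)=1$ identically; on the $(\tau_-,\sigma_-)$ block it is again $1$; so these two blocks together contribute at most $\norm{S}_{\Ic}$ after one more use of the ideal property to reassemble the block projections. On the two mixed blocks, say $(\tau_-,\sigma_+)$ where $\lambda_j\geq 0>\mu_k$, we have $\ph_f(\lambda_j,\mu_k)=\frac{\lambda_j+\mu_k}{\lambda_j-\mu_k}$; but wait — this needs care, because when $\lambda_j=\mu_k=0$ the value is $1$ by convention and the formula $\frac{\lambda+\mu}{\lambda-\mu}$ is $0/0$, so I would treat the $0$-eigenvalue indices separately (they contribute a finite-rank, hence $\Ic$-bounded, correction controlled by $\norm{S}_{\Ic}$ via the ideal property and Assumption \ref{basis assumption}). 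Away from that, on $\lambda>0$, $-\mu>0$, I substitute $s=\log(\lambda/(-\mu))$ and note $\frac{\lambda+\mu}{\lambda-\mu}=\frac{e^{s}-1}{e^{s}+1}=\tanh(s/2)$, which is bounded but \emph{not} in $\W^{1,2}(\R)$; so I instead multiply by a cutoff $\chi\in C_c^\infty(\R)$ that is $1$ on $[\log(\delta), \log(1/\delta)]$ for $\delta$ depending on the (finite) bounds and separation scale coming from $\lambda,\mu$ restricted to the first $n$ indices — except that $\delta$ must not depend on $n$, so actually I would cut off only at $\pm\infty$ (possible since $\lambda,\mu$ are uniformly bounded, so $s$ ranges in a bounded-above set when the eigenvalues are bounded away from $0$, but $s\to+\infty$ as $-\mu\to 0$). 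The honest resolution: the symbol $\tanh(s/2)$ restricted to $s$ bounded below is still not decaying at $+\infty$, so one writes $\tanh(s/2)=1-(1-\tanh(s/2))$, peeling off the constant $1$ (another $\norm{S}_{\Ic}$ contribution, with the constant-$1$ multiplier on that block reassembling to a block projection of norm $1$) and leaving $1-\tanh(s/2)=\frac{2}{e^s+1}$ which decays exponentially at $+\infty$; multiplying by a fixed cutoff killing the neighborhood of $-\infty$ then lands us in $\W^{1,2}(\R)$ with norm bounded independently of $\lambda,\mu,n$, because the cutoff can be taken independent of all data once one observes the relevant range of $s$ is $(-\infty, 2\log(\sup|\lambda|/\text{sep}))$...

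I realize the cleanest route is not to cut off by hand but simply to note that $g(s):=\mathbf 1_{s\leq 0}\cdot 1 + \mathbf 1_{s>0}\cdot\tanh(s/2)$ differs from a $\W^{1,2}$ function only by a constant and a step, the step being absorbed into the triangular truncation term (a step in $s=\log(\lambda/(-\mu))$ across $s=0$ is the condition $\lambda\leq -\mu$, i.e.~again a truncation symbol), so the proof ultimately reduces $\ph_f$, on each block, to: (constants) $+$ (triangular-truncation symbols) $+$ ($\psi_g$ with $g\in\W^{1,2}$ of controlled norm). Applying Lemma \ref{sobolev condition} to get $\psi_g\in\A(\R^2)$ with $\norm{\psi_g}_{\A}\leq\sqrt2\norm{g}_{\W^{1,2}}$, and then Proposition \ref{operator integral well defined} (via the finite $n$-truncated version, which is just a finite sum so the double operator integral reduces to a finite Schur multiplier and the bound \eqref{double operator estimate} applies with the relevant spectral constants equal to $1$ by Assumption \ref{basis assumption}), yields $\norm{T^{\lambda,\mu}_{\psi_g,n}(S)}_{\Ic}\leq 16\norm{\psi_g}_{\A}\norm{S}_{\Ic}\leq C\norm{S}_{\Ic}$ with $C$ universal. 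Summing the block contributions — finitely many constant-times-$\norm{S}_{\Ic}$ terms, finitely many $\norm{T^{\lambda,\mu}_{\Delta,n}(S)}_{\Ic}$ terms (accounting for the four sign-blocks and the reflections $\mu\mapsto-\mu$), and the $\psi_g$ terms — gives the claimed estimate $\norm{T^{\lambda,\mu}_{\ph_f,n}(S)}_{\Ic}\leq C(\norm{S}_{\Ic}+\norm{T^{\lambda,\mu}_{\Delta,n}(S)}_{\Ic})$. The final sentence (that $\sup_n\norm{T^{\lambda,\mu}_{\Delta,n}(S)}_{\La(\La(X,Y))}<\infty$ implies \eqref{boundedness condition}) is then immediate by specializing $\Ic=\La(X,Y)$ and taking the supremum over $\norm{S}_{\Ic}\leq 1$.

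The main obstacle is producing the function $g\in\W^{1,2}(\R)$ with $\W^{1,2}$-norm bounded by a \emph{universal} constant, i.e.~independent of the sequences $\lambda,\mu$ and of $n$: one must check that after peeling off the constants and the truncation steps, what remains really is a fixed $\W^{1,2}$ function (essentially $\frac{2}{e^s+1}$, or $\tanh(s/2)-\mathrm{sgn}(s)$) and does not pick up $\lambda$- or $n$-dependent cutoffs. The boundedness hypothesis on $\lambda,\mu$ is used precisely to ensure no cutoff near $s=\pm\infty$ is needed beyond the intrinsic exponential decay of this fixed function, or alternatively, as in the cited \cite[Proposition]{Potapov-Sukochev2010}, to justify that the relevant substitution $s=\log(\lambda_1/\lambda_2)$ behaves well; I expect the analogue of their argument carries over verbatim once the sign-block bookkeeping above is in place.
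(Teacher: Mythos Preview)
Your approach is essentially the same as the paper's: a sign-block decomposition of $T^{\lambda,\mu}_{\ph_f,n}(S)$, trivial bounds on the same-sign blocks, and on the remaining blocks the substitution $s=\log(\text{ratio})$ to write the symbol as $\tanh(s/2)$, which is then split into a step part (absorbed into truncation/constant terms) and the $\W^{1,2}$ function $g(t)=\frac{2}{e^{|t|}+1}$ handled by Lemma~\ref{sobolev condition} and Proposition~\ref{operator integral well defined}. The paper organizes this slightly differently---it first reduces (by the reflection $\lambda\mapsto-\lambda$, $\mu\mapsto-\mu$) to estimating $\Phi(S)=\sum_{\lambda_j,\mu_k>0}\frac{\mu_k-\lambda_j}{\mu_k+\lambda_j}\Qq_kS\Pp_j$ and then writes $\Phi(S)$ explicitly as $T^{A,B}_{\psi_g}$ applied to $T^{\lambda,\mu}_{\Delta,n}(S)$ and to $S-T^{\lambda,\mu}_{\Delta,n}(S)$---but the underlying computation, the choice of $g$, and the invoked lemmas are identical; your exploratory remarks about cutoffs are unnecessary once you land (as you do) on the fixed function $g$, whose $\W^{1,2}$ norm is a universal constant.
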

\begin{proof}
Let $n\in\N$ and $S\in \Ic$, and write $\lambda=\{\lambda_{j}\}_{j=1}^{\infty}$ and $\mu=\{\mu_{k}\}_{k=1}^{\infty}$. Throughout the proof we will only consider $\lambda_{j}$ and $\mu_{k}$ for $1\leq j,k\leq n$, but to simplify the presentation we will not mention this explicitly from here on. We can decompose $T_{\ph_{f},n}^{\lambda,\mu}(S)$ as
\begin{align*}
T_{\ph_{f},n}^{\lambda,\mu}(S)=&\sum_{\lambda_{k},\mu_{k}\geq 0}\Qq_{k}S\Pp_{j}-\sum_{\mu_{k}<0<\lambda_{j}}\frac{\mu_{k}+\lambda_{j}}{\mu_{k}-\lambda_{j}}\Qq_{k}S\Pp_{j}+\\ 
&\sum_{\lambda_{j}<0<\mu_{k}}
\frac{\mu_{k}+\lambda_{j}}{\mu_{k}-\lambda_{j}}\Qq_{k}S\Pp_{j}
-\sum_{\lambda_{k},\mu_{k}\leq 0}\Qq_{k}S\Pp_{j}+\sum_{\lambda_{k},\mu_{k}=0}\Qq_{k}S\Pp_{j}.
\end{align*}
Note that some of these terms may be zero. By the ideal property of $\Ic$ and Assumption \ref{basis assumption}, 
\begin{align}\label{subset estimate}
\Big\|\sum_{\lambda_{j},\mu_{k}\geq0}\Qq_{k}S\Pp_{j}\Big\|_{\Ic}\leq \Big\|\sum_{\mu_{k}\geq0}\Qq_{k}\Big\|_{\La(Y)}\norm{S}_{\Ic}\Big\|\sum_{\lambda_{j}\geq0}\Pp_{j}\Big\|_{\La(X)}\leq \norm{S}_{\Ic}.
\end{align}
Similarly, $\norm{\sum_{\lambda_{k},\mu_{k}\leq 0}\Qq_{k}S\Pp_{j}}_{\Ic}$ and $\norm{\sum_{\lambda_{k},\mu_{k}=0}\Qq_{k}S\Pp_{j}}_{\Ic}$ are each bounded by $\norm{S}_\Ic$. To bound the other terms it is sufficient to show that
\begin{align*}
\norm{\sum_{\lambda_{j},\mu_{k}>0}\frac{\mu_{k}-\lambda_{j}}{\mu_{k}+\lambda_{j}}\Qq_{k}S\Pp_{j}}_\Ic\leq C'\Big(\norm{S}_{\Ic}+\norm{{T}_{\Delta,n}^{\lambda,\mu}(S)}_{\Ic}\Big)
\end{align*}
for some universal constant $C'\geq 0$. Indeed, replacing $\lambda$ by $-\lambda$ and $\mu$ by $-\mu$ then yields the desired conclusion. Let
\begin{align*}
\Phi(S):=\sum_{\lambda_{j},\mu_{k}>0}\frac{\mu_{k}-\lambda_{j}}{\mu_{k}+\lambda_{j}}\Qq_{k}S\Pp_{j},
\end{align*}
and define $g\in\W^{1,2}(\R)$ by $g(t):=\frac{2}{e^{\abs{t}}+1}$
for $t\in\R$. Note that $\Phi(S)$ is equal to
\begin{align*}
\sum_{0<\mu_{k}\leq\lambda_{j}}\left(g\left(\log\tfrac{\lambda_{j}}{\mu_{k}}\right)-1\right)\Qq_{k}S\Pp_{j}
+\sum_{0<\lambda_{j}<\mu_{k}}\left(1-g\left(\log\tfrac{\lambda_{j}}{\mu_{k}}\right)\right)\Qq_{k}S\Pp_{j}.
\end{align*}
Now let $\psi_{g}:\C^{2}\rightarrow \C$ be as in \eqref{psi_f}, and let $A:=\sum_{j=1}^{\infty}\lambda_{j}\Pp_{j}\in\La(X)$ and $B:=\sum_{k=1}^{\infty}\mu_{k}\Qq_{k}\in\La(Y)$. Let $T_{\psi_{g}}^{A,B}$ be as in \eqref{double operator integral}. One can check that
\begin{align*}
\Phi(S)=&T_{\psi_{g}}^{A,B}(T_{\Delta,n}^{\lambda,\mu}(S))-\sum_{\lambda_{j},\mu_{k}>0}\Qq_{k}T_{\Delta,n}^{\lambda,\mu}(S)\Pp_{j}+\\
&\sum_{\lambda_{j},\mu_{k}>0}\Qq_{k}(S-T_{\Delta,n}^{\lambda,\mu}(S))\Pp_{j}
-T_{\psi_{g}}^{A,B}(S-{T}_{\Delta,n}^{\lambda,\mu}(S)).
\end{align*}
Any Banach space with a Schauder basis is separable and has the bounded approximation property, hence Lemma \ref{sobolev condition} and Proposition \ref{operator integral well defined} yield 
\begin{align*}
\norm{T_{\psi_{g}}^{A,B}(T_{\Delta,n}^{\lambda,\mu}(S))}_{\Ic}\leq 16\sqrt{2}\,\specA\specB\norm{g}_{\W^{1,2}(\R)}\norm{T_{\Delta,n}^{\lambda,\mu}(S))}_{\Ic}.
\end{align*}
By \eqref{spectral estimate}, $\specA=\specB=1$. Similarly,
\begin{align*}
\norm{T_{\psi_{g}}^{A,B}(S-T_{\Delta,n}^{\lambda,\mu}(S))}_{\Ic}\leq 16\sqrt{2}\norm{g}_{\W^{1,2}(\R)}\Big(\Big\|S\Big\|_{\Ic}+\Big\|T_{\Delta,n}^{\lambda,\mu}(S))\Big\|_{\Ic}\Big).
\end{align*}
By the same arguments as in \eqref{subset estimate},
\begin{align*}
\Big\|\sum_{\lambda_{j},\mu_{k}>0}\Qq_{k}T_{\Delta,n}^{\lambda,\mu}(S)\Pp_{j}\Big\|_{\Ic}+\Big\|\sum_{\lambda_{j},\mu_{k}>0}\Qq_{k}(S-T_{\Delta,n}^{\lambda,\mu}(S))\Pp_{j}\Big\|\leq 2\Big\|S\Big\|_{\Ic}+\Big\|T_{\Delta,n}^{\lambda,\mu}(S)\Big\|_{\Ic}.
\end{align*}
Combining all these estimates yields
\begin{align*}
\norm{\Phi(S)}_\Ic\leq
\Big(2+32\sqrt{2}\norm{g}_{\W^{1,2}(\R)}\Big)
\Big(\Big\|S\Big\|_{\Ic}+\Big\|{T}_{\Delta,n}^{\lambda,\mu}(S)\Big\|_{\Ic}\Big),
\end{align*}
as desired.
\end{proof}

\section{The absolute value function on $\La(\ell_{p},\ell_{q})$}\label{lp-spaces}

In this section we study the absolute value function on the space $\La(\ell_{p},\ell_{q})$. We show that the absolute value function is operator Lipschitz on $\La(\ell_{p},\ell_{q})$ for $p<q<\infty$, on $\La(\ell_{1})$ and on $\La(\ce_{0})$. We also obtain results for $p\geq q$. 

The key idea of the proof is entirely different from the techniques used in \cite{Davies},\cite{PWS02}, \cite{DDPS} and \cite{Kosaki1992}, which are based on a special geometric property of the reflexive Schatten von Neumann ideals (the UMD-property), a property which $\La(\ell_{p},\ell_{q})$ does not have. Instead, we prove our results by relating estimates for the operators from \eqref{sequence truncation operator} to the standard triangular truncation operator, defined in \eqref{classical_tr_tr} below. For this we use the theory of Schur multipliers on $\La(\ell_{p},\ell_{q})$ developed in \cite{Ben}. We then appeal to results from \cite{Ben1} about the boundedness of the standard triangular truncation on $\La(\ell_{p},\ell_{q})$.

\subsection{Schur multipliers}

For $p\in[1,\infty)$, let $\{e_{j}\}_{j=1}^{\infty}$ be the standard Schauder basis of $\ell_{p}$, with the corresponding projections $\Pp_{j}(x):=x_{j}e_{j}$ for $x=\sum_{k=1}^{\infty}x_{k}e_{k}$ and $j\in\N$. We consider this basis and the corresponding projections on all $\ell_{p}$-spaces simultaneously, for simplicity of notation. Note that Assumption \ref{basis assumption} is satisfied for this basis. For $q\in[1,\infty]$, any operator $S\in \La(\ell_{p},\ell_{q})$ can be represented by an infinite matrix $\tilde{S}=\{s_{jk}\}_{j,k=1}^\infty$, where $s_{jk}:=(S(e_k),e_j)$ for $j,k\in\N$. For an infinite matrix $M=\{m_{jk}\}_{j,k=1}^\infty$ the
product $M\ast \tilde{S}:=\{m_{jk}s_{jk}\}$ is the \emph{Schur
product} of the matrices $M$ and $\tilde{S}$. The matrix $M$ is a
\emph{Schur multiplier} if the mapping $\tilde{S}\mapsto M\ast \tilde{S}$ is a bounded operator on $\La(\ell_{p},\ell_{q})$. Throughout, we identify Schur multipliers with their corresponding operators.

The notion of a Schur multiplier is a discrete version of a double operator integral (for details see e.g.~\cite{PST1, SST-survey}). Schur multipliers on the space $\La(\ell_{p},\ell_{q})$ are also called $(p,q)$-multipliers.
We denote by $\mathcal M(p,q)$ the Banach space of $(p,q)$-multipliers with the norm
\begin{align*}
\norm{M}_{(p,q)}:=\sup\left\{\norm{M\ast \tilde{S}}_{\La(\ell_{p},\ell_{q})}\left| \norm{S}_{\La(\ell_{p},\ell_{q})}\leq 1\right.\right\}.
\end{align*}

\begin{remark}\label{c0}
We will also consider $(p,q)$-multipliers $M$ for $p=\infty$ and $q\in[1,\infty]$. Any operator $S\in\La(\ce_{0},\ell_{q})$ corresponds to an infinite matrix $\tilde{S}=\{s_{jk}\}_{j,k=1}^\infty$, and $M$ is said to be a $(\infty,q)$-multiplier if the mapping $S\mapsto M\ast \tilde{S}$ is a bounded operator on $\La(\ce_{0},\ell_{q})$, and we define the Banach space $\mathcal{M}(\infty,q)$ in the obvious way. Below we will often not explicitly distinguish the case $p=\infty$ from $1\leq p<\infty$, in order to keep the presentation simple. The reader should keep in mind that for $p=\infty$ the space $\ell_{p}$ should be replaced by $\ce_{0}$.
\end{remark} 

\begin{remark}\label{rem_max_entry}
It is straightforward to see that $\norm{M}_{(p,q)}\geq \sup_{j,k\in\N}\abs{m_{j,k}}$ for all $p,q\in[1,\infty]$ and $M\in\mathcal{M}(p,q)$.
\end{remark}

For $p,q\in[1,\infty]$ and $S\in\La(\ell_{p},\ell_{q})$, define
\begin{align}\label{classical_tr_tr}
\Tr_{\Delta}(S):=\sum_{k\leq j}\Pp_{k}S\Pp_{j},
\end{align}
which is a well-defined element of $\La(\ell_{r},\ell_{s})$ for suitable $r,s\in[1,\infty]$, by Proposition \ref{Pil-Ben} below. The operator $\Tr_{\Delta}$ is the \emph{(standard) triangular truncation}. This operator can be identified with the following Schur multiplier. Let $T_\Delta'=\{t'_{jk}\}_{j,k=1}^\infty$ be a matrix given by
$t'_{jk}=1$ for $k\le j$ and $t'_{jk}=0$ otherwise.
It is clear that $\Tr_{\Delta}$ extends to a bounded linear operator on $\La(\ell_{p},\ell_{q})$ if and only if $T'_\Delta$ is a $(p,q)$-multiplier. 
For $n\in\N$ and $r,s\in[1,\infty]$ we will consider the operators $\Tr_{\Delta,n}\in\La(\La(\ell_{p},\ell_{q}),\La(\ell_{r},\ell_{s}))$, given by 
\begin{align*}
\Tr_{\Delta,n}(S):=\sum_{1\leq k\leq j\leq n}\Pp_{k}S\Pp_{j}\qquad(S\in \La(\ell_{p},\ell_{q}).
\end{align*}

The dependence of the $(p,q)$-norm of $\Tr_{\Delta}$ on the indices $p$ and $q$ was determined in \cite{Ben1} and \cite{KP} (see also \cite{ST}), and is as follows. 

\begin{proposition}\label{Pil-Ben} 
Let $p,q\in[1,\infty]$. Then the following statements hold.
\begin{itemize}
\item[$(i)$]\cite[Theorem 5.1]{Ben1} If $p<q$, $1=p=q$ or $p=q=\infty$, then $\Tr_{\Delta}\in \mathcal{M}(p,q)$.
\item[$(ii)$]\cite[Proposition 1.2]{KP} If $1\neq p\geq q\neq \infty$, then there is a constant $C>0$ such that 
\begin{align*}
\norm{\Tr_{\Delta,n}}_{\La(\La(\ell_{p},\ell_{q}))}\geq C \ln n
\end{align*}
for all $n\in\N$. 
\item[$(iii)$]\cite[Theorem 5.2]{Ben1} If $1\neq p\geq q\neq\infty$, then for each $s>q$ and $r<p$, 
\begin{align*}
\Tr_{\Delta}:\La(\ell_{p},\ell_{q})\to\La(\ell_{p},\ell_{s})\quad\textrm{and}\quad \Tr_{\Delta}:\La(\ell_{p},\ell_{q})\to\La(\ell_{r},\ell_{q})
\end{align*}
are bounded.
\end{itemize}
\end{proposition}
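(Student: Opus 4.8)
Proposition \ref{Pil-Ben} compiles known facts about the triangular truncation as a Schur multiplier, so my plan is to match each part with its source and to supply an elementary argument where one is available. For part $(i)$ I would first dispose of the cases $1=p=q$ and $p=q=\infty$ by hand. If $S\in\La(\ell_{1})$ has matrix $\{s_{jk}\}_{j,k=1}^{\infty}$, then $\norm{S}_{\La(\ell_{1})}=\sup_{k}\sum_{j}\abs{s_{jk}}$ and the $k$-th column of $\Tr_{\Delta}(S)$ is obtained from that of $S$ by keeping only the entries with $j\leq k$, so $\norm{\Tr_{\Delta}(S)}_{\La(\ell_{1})}\leq\norm{S}_{\La(\ell_{1})}$ and $T_{\Delta}'\in\mathcal{M}(1,1)$. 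The case $X=Y=\ce_{0}$ is symmetric, now using that $\norm{S}_{\La(\ce_{0})}$ controls the row sums of $S$ while $\Tr_{\Delta}$ restricts each row to a final segment (one also checks that the truncated matrix still maps $\ce_{0}$ into $\ce_{0}$, which uses that each column of $S$ lies in $\ce_{0}$). The remaining case $p<q$ is the substantive one and is precisely \cite[Theorem 5.1]{Ben1}; I would invoke Bennett's result directly, as it rests on his classification of $(p,q)$-multipliers for $p<q$ and admits no short proof.

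For part $(ii)$, the logarithmic lower bound $\norm{\Tr_{\Delta,n}}_{\La(\La(\ell_{p},\ell_{q}))}\geq C\ln n$ when $1\neq p\geq q\neq\infty$ is \cite[Proposition 1.2]{KP}, which I would quote. The mechanism is the familiar one: test $\Tr_{\Delta,n}$ against a fixed Hilbert-matrix-type operator truncated to its first $n$ coordinates, which remains bounded on $\La(\ell_{p},\ell_{q})$ with norm $O(1)$ uniformly in $n$ in this range of exponents, whereas its triangular truncation has norm comparable to $\ln n$; this is the classical logarithmic blow-up of triangular truncation on $\La(\ell_{2})$, and the content of \cite{KP} is that the example persists once $(p,q)$ moves off the diagonal into the region $p\geq q$.

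For part $(iii)$, the boundedness of $\Tr_{\Delta}:\La(\ell_{p},\ell_{q})\to\La(\ell_{p},\ell_{s})$ for $s>q$ and of $\Tr_{\Delta}:\La(\ell_{p},\ell_{q})\to\La(\ell_{r},\ell_{q})$ for $r<p$, in the range $1\neq p\geq q\neq\infty$, is \cite[Theorem 5.2]{Ben1}, to be cited directly; heuristically, enlarging the target exponent past $q$ or shrinking the domain exponent below $p$ moves one back into the regime governed by Bennett's multiplier estimates, so the obstruction to boundedness disappears. The only genuine obstacle, were one to insist on a self-contained treatment, lies in parts $(i)$ for $p<q$ and $(iii)$: both depend on Bennett's theory of Schur multipliers on $\La(\ell_{p},\ell_{q})$ with mixed exponents, which is nontrivial and does not reduce to the Hilbert-space case or to a soft interpolation argument. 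Since that machinery is developed in full in \cite{Ben1}, the honest proof is to cite it rather than reproduce it.
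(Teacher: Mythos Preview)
Your proposal is correct and matches the paper, which gives no proof of this proposition and simply states it with citations to \cite{Ben1} and \cite{KP}. Your supplementary elementary arguments for the endpoint cases $p=q=1$ and $p=q=\infty$ go slightly beyond the paper and are fine, modulo a harmless index slip (since $t'_{jk}=1$ precisely when $k\leq j$, the truncation retains the entries with $j\geq k$ in column $k$, not $j\leq k$; the conclusion is unaffected because zeroing entries can only decrease the column $\ell_{1}$-sums).
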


We will also need the following result, a generalization of \cite[Theorem 4.1]{Ben}. For a matrix $M=\{m_{jk}\}_{j,k=1}^{\infty}$, let $\widetilde{M}=\{\widetilde{m}_{jk}\}_{j,k=1}^{\infty}$ be obtained from $M$ by repeating the first column, i.e., $\widetilde{m}_{j1}=m_{j1}$ and $\widetilde{m}_{jk}=m_{j(k-1)}$ for $j\in\N$ and $k\geq 2$. 

\begin{proposition}\label{Th_ColRepeating} 
Let $p,q,r,s\in[1,\infty]$ with $r\leq p$. Let $M=\{m_{jk}\}_{j,k=1}^{\infty}$ be such that $S\mapsto M\ast S$ is a bounded mapping $\La(\ell_{p},\ell_{q})\to \La(\ell_{r},\ell_{s})$. Then $S\mapsto \widetilde{M}\ast S$ is also a bounded mapping $\La(\ell_{p},\ell_{q})\to \La(\ell_{r},\ell_{s})$, with 
\begin{align*}
\norm{M}_{\La(\La(\ell_{p},\ell_{q}),\La(\ell_{r},\ell_{s}))}=\norm{\widetilde{M}}_{\La(\La(\ell_{p},\ell_{q}),\La(\ell_{r},\ell_{s}))}.
\end{align*}
In particular, if $M\in\mathcal{M}(p,q)$ then $\widetilde{M}\in\mathcal{M}(p,q)$ with $\norm{M}_{(p,q)}=\norm{\widetilde{M}}_{(p,q)}$.
\end{proposition}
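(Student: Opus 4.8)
The plan is to show that inserting a repeated copy of the first column into a matrix $M$ does not change the norm of the associated Schur multiplication operator $\La(\ell_p,\ell_q)\to\La(\ell_r,\ell_s)$, by exhibiting the passage from $M$ to $\widetilde M$ (and back) as a composition of isometric operations on the relevant operator spaces. First I would prove the inequality $\norm{\widetilde M}\leq\norm{M}$. Given $S\in\La(\ell_p,\ell_q)$ with matrix $\{s_{jk}\}$, I want to produce an operator $S'\in\La(\ell_p,\ell_q)$ of the same norm whose columns are those of $S$ with the first column repeated in positions $1$ and $2$; then $M\ast S'$ has, in columns $2,3,4,\dots$, exactly the entries of $\widetilde M\ast S$, so that $\widetilde M\ast S$ is obtained from $M\ast S'$ by deleting column $1$ — a norm-nonincreasing operation (restriction along the inclusion of a coordinate subspace of $\ell_r$ composed with a coordinate projection). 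The construction of $S'$: on $\ell_p$, let $R\colon\ell_p\to\ell_p$ be the "column-duplicating" map sending $e_1\mapsto e_1$, $e_2\mapsto e_1$, $e_k\mapsto e_{k-1}$ for $k\geq 3$; this $R$ is bounded $\ell_p\to\ell_p$ (indeed of norm $\leq 2^{1/p}$, or exactly the relevant constant — the key point for the case $r\le p$ is handled below), and set $S':=S\circ R'$ for an appropriate variant so that $S'e_1=S'e_2=Se_1$ and $S'e_k=Se_{k-1}$ for $k\ge 3$. The hypothesis $r\leq p$ is what makes the duplication work without loss: one uses that the natural map realizing the repeated column has the same norm as the identity precisely because of the ordering of the indices (this is the content of, and the reason for citing, the generalization of \cite[Theorem 4.1]{Ben}).

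For the reverse inequality $\norm{M}\leq\norm{\widetilde M}$, I would run the argument backwards: $M$ is recovered from $\widetilde M$ by deleting the (repeated) first column, which is again a norm-nonincreasing operation on the domain side, i.e. $M\ast S$ is obtained from $\widetilde M\ast S''$ for a suitable $S''\in\La(\ell_p,\ell_q)$ with $\norm{S''}\le\norm{S}$ whose matrix has a zero first column and whose $k$-th column for $k\ge 2$ equals the $(k-1)$-th column of $S$; explicitly $S'':=S\circ P$ where $P\colon\ell_p\to\ell_p$ is the shift $e_k\mapsto e_{k+1}$ (an isometry), so $\widetilde M\ast S''$ agrees with $M\ast S$ after discarding column $1$. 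Combining the two inequalities gives the asserted equality of operator norms, and specializing to $r=p$, $s=q$ gives the final sentence about $\mathcal M(p,q)$.

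The main obstacle I anticipate is bookkeeping: one must be careful that the operations "duplicate a column of $S$", "delete a column of $M\ast S$", etc., are realized by genuine bounded operators between the correct spaces with the claimed norms, and in particular that no constant worse than $1$ creeps in. The delicate point is the role of the hypothesis $r\le p$: the column-duplication map, viewed as acting on the domain, must not increase the $\La(\ell_p,\ell_q)$-norm of $S$, and this is exactly where $r\le p$ (rather than $r=p$) is needed, since one is really comparing norms of operators out of $\ell_p$ against norms of operators out of $\ell_r$. I would handle this by invoking the generalization of \cite[Theorem 4.1]{Ben} cited in the statement — i.e. the block/column-repetition invariance of Schur multiplier norms between $\La(\ell_p,\ell_q)$ and $\La(\ell_r,\ell_s)$ under $r\le p$ — rather than reproving it from scratch, and then the two-sided estimate above is a short formal argument. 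Everything else is routine identification of matrices with operators as set up at the start of this section.
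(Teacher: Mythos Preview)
Your reverse inequality $\norm{M}\leq\norm{\widetilde M}$ is fine: the shift $P$ on $\ell_p$ is an isometry, $\widetilde M\ast(S\circ P)$ has a zero first column followed by the columns of $M\ast S$, and deleting that first column is restriction to a coordinate subspace of $\ell_r$.

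The forward inequality $\norm{\widetilde M}\leq\norm{M}$, however, has a genuine gap. First, the specific relation you assert is false: if $S'$ has columns $s^{(1)},s^{(1)},s^{(2)},s^{(3)},\ldots$, then column $k$ of $M\ast S'$ is $m^{(k)}\cdot s^{(k-1)}$ for $k\geq 2$, whereas column $k$ of $\widetilde M\ast S$ is $m^{(k-1)}\cdot s^{(k)}$; these do not agree. Second, and more fundamentally, no fixed operator $S'=S\circ R$ with $R\in\La(\ell_p)$ can work here: your duplication map $R$ has $\norm{R}_{\La(\ell_p)}=2^{1/p}>1$, so at best you would get $\norm{\widetilde M}\leq 2^{1/p}\norm{M}$. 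Your proposed fix, to ``invoke the generalization of \cite[Theorem 4.1]{Ben}'', is circular: the proposition you are proving \emph{is} that generalization.

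What actually happens in Bennett's argument (and in the paper, which simply points to it) is that one works vector by vector. For fixed $x\in\ell_r$, one has
\[
((\widetilde M\ast S)x)_j=m_{j1}(s_{j1}x_1+s_{j2}x_2)+\sum_{k\geq 2}m_{jk}s_{j(k+1)}x_{k+1}=((M\ast S_x)x')_j,
\]
where $x'=\big((\abs{x_1}^r+\abs{x_2}^r)^{1/r},x_3,x_4,\ldots\big)$ and $S_x$ has first column $(s_{j1}x_1+s_{j2}x_2)/x_1'$ and $k$-th column $s^{(k+1)}$ for $k\geq 2$. Then $\norm{x'}_r=\norm{x}_r$, and $\norm{S_x}_{\La(\ell_p,\ell_q)}\leq\norm{S}_{\La(\ell_p,\ell_q)}$ reduces to checking that the vector $\big(x_1 z_1/x_1',\,x_2 z_1/x_1',\,z_2,z_3,\ldots\big)$ has $\ell_p$-norm at most $\norm{z}_p$, i.e.\ that
\[
\abs{x_1}^p+\abs{x_2}^p\leq(\abs{x_1}^r+\abs{x_2}^r)^{p/r},
\]
which holds precisely because $r\leq p$. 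This is the inequality the paper isolates as the only point beyond Bennett's original proof. The key feature you are missing is that the ``column-combining'' operator on $S$ must depend on the test vector $x$.
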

\begin{proof}
The proof is almost identical to that of \cite[Theorem 4.1]{Ben}, and the condition $r\leq p$ is used to ensure that $\abs{x_{1}}^{p}+\abs{x_{2}}^{p}\leq (\abs{x_{1}}^{r}+\abs{x_{2}}^{r})^{p/r}$ for all $x_{1},x_{2}\in\C$ (with the obvious modification for $p=\infty$ or $r=\infty$).
\end{proof}

\begin{remark}\label{rem_on_Ben}
By considering the transpose $M'$ of a matrix $M$, and using that $M':\La(\ell_{q^{*}},\ell_{p^{*}})\to \La(\ell_{s^{*}},\ell_{r^{*}})$ with 
\begin{align*}
\norm{M'}_{\La(\La(\ell_{q^{*}},\ell_{p^{*}}),\,\La(\ell_{s^{*}},\ell_{r^{*}}))}=\norm{M}_{\La(\La(\ell_{p},\ell_{q}),\,\La(\ell_{r},\ell_{s}))},
\end{align*}
as is straightforward to check, one obtains from Proposition \ref{Th_ColRepeating} that for $q^{*}\geq s^{*}$, that is, for $s\leq q$, row repetitions do not alter the $\norm{\cdot}_{\La(\La(\ell_{p},\ell_{q}),\,\La(\ell_{r},\ell_{s}))}$-norm of a matrix. Moreover, since $\norm{S}_{\La(\ell_{p},\ell_{q})}$ is invariant under permutations of the columns and rows of $S\in\La(\ell_{p},\ell_{q})$, rearrangement of the rows and columns of $M\in\La(\La(\ell_{p},\ell_{q}),\,\La(\ell_{r},\ell_{s}))$ does not change the norm $\norm{M}_{\La(\La(\ell_{p},\ell_{q}),\,\La(\ell_{r},\ell_{s}))}$.
\end{remark}

The following lemma is crucial to our main results. 

\begin{lemma}\label{crucial lemma}
Let $p,q,r,s\in[1,\infty]$ with $r\leq p$ and $s\leq q$. Let $\lambda=\{\lambda_{j}\}_{j=1}^{\infty}$ and $\mu=\{\mu_{k}\}_{k=1}^{\infty}$ be sequences of real numbers. Then
\begin{align*}
\norm{T^{\lambda,\mu}_{\Delta, n}}_{\La(\La(\ell_{p},\ell_{q}),\,\La(\ell_{r},\ell_{s}))}\leq \norm{\Tr_{\Delta,n}}_{\La(\La(\ell_{p},\ell_{q}),\,\La(\ell_{r},\ell_{s}))}
\end{align*}
for all $n\in\N$.
\end{lemma}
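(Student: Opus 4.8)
The plan is to realize $T^{\lambda,\mu}_{\Delta,n}$ as a compression of the standard triangular truncation $\Tr_{\Delta,n}$ after a suitable rearrangement and repetition of rows and columns, so that the desired inequality follows from Proposition~\ref{Th_ColRepeating} and Remark~\ref{rem_on_Ben}. First I would record the matrix picture: $T^{\lambda,\mu}_{\Delta,n}$ is the Schur multiplier (restricted to the upper-left $n\times n$ block) with matrix $M^{\lambda,\mu}_{n}=\{m_{jk}\}$ where $m_{jk}=1$ if $\mu_{k}\leq\lambda_{j}$ and $m_{jk}=0$ otherwise (for $1\leq j,k\leq n$, and $0$ elsewhere). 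The key combinatorial observation is that such a "staircase" matrix associated to two finite real sequences is, up to permuting rows and columns and then repeating some rows and columns, exactly the genuine triangular pattern $t'_{jk}=\mathbf 1_{k\leq j}$. Concretely, list the distinct values taken by the combined set $\{\lambda_{j}\}\cup\{\mu_{k}\}$ in increasing order as $\nu_{1}<\dots<\nu_{m}$; this induces a nondecreasing assignment $j\mapsto a(j)$ with $\lambda_{j}=\nu_{a(j)}$ and $k\mapsto b(k)$ with $\mu_{k}=\nu_{b(k)}$, and $m_{jk}=1$ exactly when $b(k)\leq a(j)$. Thus $M^{\lambda,\mu}_{n}$ is obtained from the $m\times m$ lower-triangular pattern $\{\mathbf 1_{\beta\leq\alpha}\}_{\alpha,\beta=1}^{m}$ by (i) permuting rows and columns so that the $\lambda$'s and $\mu$'s appear in nondecreasing order, and (ii) repeating the row indexed by $\alpha$ as many times as there are $j$ with $a(j)=\alpha$, and likewise repeating columns according to the multiplicities of the $\mu$-values (with some rows/columns of the triangular pattern possibly used zero times, which only means we pass to a submatrix).

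Next I would assemble the norm bounds. By Remark~\ref{rem_on_Ben}, rearranging rows and columns does not change the $\norm{\cdot}_{\La(\La(\ell_{p},\ell_{q}),\La(\ell_{r},\ell_{s}))}$-norm, so step (i) is free. For step (ii), Proposition~\ref{Th_ColRepeating} says (using $r\leq p$) that repeating a column leaves this norm unchanged; applying it iteratively handles all the column repetitions, and the transpose argument in Remark~\ref{rem_on_Ben} (using $s\leq q$) gives the same for row repetitions. Passing to a submatrix can only decrease the norm, since zeroing out rows or columns of $S$ (equivalently, compressing by coordinate projections, which have norm $\leq 1$ by Assumption~\ref{basis assumption}) is norm-nonincreasing. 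Chaining these: $\norm{T^{\lambda,\mu}_{\Delta,n}}\leq\norm{(\text{the }m\times m\text{ triangular pattern})}=\norm{\Tr_{\Delta,m'}}$ for the appropriate truncation level $m'\leq n$, and since $\Tr_{\Delta,m'}$ is the compression of $\Tr_{\Delta,n}$ it satisfies $\norm{\Tr_{\Delta,m'}}\leq\norm{\Tr_{\Delta,n}}$. This yields the claimed inequality.

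The step I expect to be the main obstacle, or at least the one requiring the most care, is bookkeeping the reduction to a submatrix correctly: Proposition~\ref{Th_ColRepeating} as stated repeats a column rather than deletes one, so one must phrase the argument as "the triangular pattern, after repeating rows/columns according to multiplicities, has a submatrix equal to $M^{\lambda,\mu}_{n}$ up to permutation," and then invoke the trivial direction (compression decreases norm) rather than Proposition~\ref{Th_ColRepeating} in reverse. A clean way to avoid fuss is to argue directly: given $S\in\La(\ell_{p},\ell_{q})$ with $\norm{S}\leq 1$, define $\widehat S\in\La(\ell_{p},\ell_{q})$ by placing the entries of $S$ into the positions dictated by the maps $a(\cdot),b(\cdot)$ (i.e. $\widehat S$ has the same rows and columns as $S$ but reindexed/duplicated to the $m$-scale, with unused slots zero); check $\norm{\widehat S}_{\La(\ell_{p},\ell_{q})}\leq 1$ using that duplicating rows of a matrix does not increase its $\La(\ell_{p},\ell_{q})$-norm when $r\le p$ type inequalities hold — here exactly $p\geq r$ and $q\geq s$ enter — and then $T^{\lambda,\mu}_{\Delta,n}(S)$ is recovered from $\Tr_{\Delta,m}(\widehat S)$ by a further compression. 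Either route is routine once the indexing is set up; there is no analytic difficulty beyond the cited propositions.
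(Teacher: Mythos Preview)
Your approach is essentially the paper's: realize $T^{\lambda,\mu}_{\Delta,n}$ as the Schur multiplier of a $\{0,1\}$ staircase matrix, sort by permuting rows and columns (Remark~\ref{rem_on_Ben}), and then use Proposition~\ref{Th_ColRepeating} to strip repeated rows and columns until a genuine triangular pattern $\Tr_{\Delta,N}$ with $N\leq n$ remains. The one bookkeeping point you correctly flag as the obstacle is real: merging the $\lambda$'s and $\mu$'s into a single list of length $m$ can give $m>n$, and the submatrix you extract from the $m\times m$ triangle need not itself be triangular---so the line ``$=\|\Tr_{\Delta,m'}\|$ for some $m'\leq n$'' is not yet justified. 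The clean fix (implicit in the paper's ``omit repeated rows and columns'' and easy to supply) is that after removing repeated rows and columns one should also discard any all-zero rows and columns; a short combinatorial check then shows the remaining staircase is exactly an $N\times N$ lower-triangular pattern with $N\leq n$, which is $\Tr_{\Delta,N}$ up to a permutation.
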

\begin{proof}
Note that $T_{\Delta,n}^{\lambda,\mu}(S)=M\ast S$ for all $S\in\La(\ell_{p},\ell_{q})$, where $M=\{m_{jk}\}_{j,k=1}^{\infty}$ is such that $m_{jk}=1$ if $1\leq j,k\leq n$ and $\mu_{k}\leq \lambda_{j}$, and $m_{jk}=0$ otherwise. It suffices to prove that $\norm{M}_{\La(\La(\ell_{p},\ell_{q}),\,\La(\ell_{r},\ell_{s}))}\leq \norm{\Tr}_{\La(\La(\ell_{p},\ell_{q}),\,\La(\ell_{r},\ell_{s}))}$. Assume that $M$ is non-zero, otherwise the statement is trivial. By Remark \ref{rem_on_Ben}, rearrangement of the rows and columns of $M$ does not change its norm. Hence, we may assume that $\{\lambda_{j}\}_{j=1}^{n}$ and $\{\mu_{k}\}_{k=1}^{n}$ are decreasing. Now $M$ has the property that, if $m_{jk}=1$, then $m_{il}=1$ for all $i\leq j$ and $k\leq l\leq m_{2}$. By Proposition \ref{Th_ColRepeating} and Remark \ref{rem_on_Ben}, we may omit repeated rows and columns of $M$, and doing this repeatedly reduces $M$ to $\Tr_{\Delta,N}$ for some $1\leq N\leq n$. Noting that $\norm{\Tr_{\Delta,N}}_{\La(\La(\ell_{p},\ell_{q}),\,\La(\ell_{r},\ell_{s}))}\leq \norm{\Tr_{\Delta,n}}_{\La(\La(\ell_{p},\ell_{q}),\,\La(\ell_{r},\ell_{s}))}$ concludes the proof.
\end{proof}

\subsection{The cases $p<q$, $p=q=1$, $p=q=\infty$}\label{p<q}

We now combine the theory from the previous sections to deduce our main results. Throughout this section let $f(t):=\abs{t}$ for $t\in\R$. 

\begin{theorem}\label{truncation estimate_inf_lplq} 
Let $p,q\in[1,\infty]$ with $p<q$, $p=q=1$ or $p=q=\infty$. Then there exists a constant $C\geq 0$ such that the following holds (where $\ell_{\infty}$ should be replaced by $\ce_{0}$). Let $A\in\La_{\ud}(\ell_{p})$ and $B\in\La_{\ud}(\ell_{q})$ have real spectrum. Then
\begin{align*}
\norm{f(B)S-Sf(A)}_{\La(\ell_{p},\ell_{q})}\leq C\diagA\diagB \norm{BS-SA}_{\La(\ell_{p},\ell_{q})}
\end{align*}
for all $S\in \La(\ell_{p},\ell_{q})$.
\end{theorem}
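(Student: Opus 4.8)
The plan is to combine the machinery from Sections \ref{unconditional basis spaces} and \ref{lp-spaces} exactly as the paper has set it up. The key observation is that Proposition \ref{commutator estimates diagonalizable} reduces the desired commutator estimate to verifying the boundedness condition \eqref{boundedness condition}, namely that $\sup_{n\in\N}\norm{T^{\lambda,\mu}_{\ph_{f},n}}_{\La(\La(\ell_{p},\ell_{q}))}<\infty$, where $\ph_{f}$ is the divided difference of the absolute value function from \eqref{psi_f_2}. In turn, Proposition \ref{truncation estimate_inf} bounds $\norm{T^{\lambda,\mu}_{\ph_{f},n}(S)}$ in terms of $\norm{S}$ and $\norm{T^{\lambda,\mu}_{\Delta,n}(S)}$, so it suffices to bound the norms of the triangular truncation operators $T^{\lambda,\mu}_{\Delta,n}$ associated to the sequences $\lambda,\mu$ uniformly in $n$.

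First I would fix $A\in\La_{\ud}(\ell_{p},\lambda,U)$ and $B\in\La_{\ud}(\ell_{q},\mu,V)$ with real spectrum, so $\lambda=\{\lambda_{j}\}$ and $\mu=\{\mu_{k}\}$ are bounded real sequences. Apply Lemma \ref{crucial lemma} with $r=p$, $s=q$ (which trivially satisfies $r\leq p$, $s\leq q$): this gives
\begin{align*}
\norm{T^{\lambda,\mu}_{\Delta,n}}_{\La(\La(\ell_{p},\ell_{q}))}\leq \norm{\Tr_{\Delta,n}}_{\La(\La(\ell_{p},\ell_{q}))}\leq \norm{\Tr_{\Delta}}_{\La(\La(\ell_{p},\ell_{q}))}
\end{align*}
for all $n$, where the second inequality holds because $\Tr_{\Delta,n}(S)=\Tr_{\Delta}(\sum_{j\leq n}\Pp_j S\Pp_j)$-type truncations and Assumption \ref{basis assumption} ensure the partial truncations are dominated by the full one (or one invokes that $\Tr_{\Delta,n}$ is obtained from $\Tr_\Delta$ by compression with coordinate projections of norm $1$). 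By Proposition \ref{Pil-Ben}$(i)$, in precisely the range $p<q$, $p=q=1$, or $p=q=\infty$, the standard triangular truncation $\Tr_{\Delta}$ belongs to $\mathcal{M}(p,q)$, so $C_{0}:=\norm{\Tr_{\Delta}}_{(p,q)}<\infty$; hence $\sup_{n}\norm{T^{\lambda,\mu}_{\Delta,n}}_{\La(\La(\ell_p,\ell_q))}\leq C_{0}$. For the $p=q=\infty$ case one works throughout with $\ce_{0}$ in place of $\ell_{\infty}$, as flagged in Remark \ref{c0}; the basis of $\ce_0$ is unconditional and satisfies Assumption \ref{basis assumption}, so all cited results apply.

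Next, Proposition \ref{truncation estimate_inf} applied with $\Ic=\La(\ell_{p},\ell_{q})$ (which has the strong convex compactness property as the full operator space, or one simply uses the operator-norm version) yields a universal constant $C_{1}$ with $\norm{T^{\lambda,\mu}_{\ph_{f},n}(S)}\leq C_{1}(\norm{S}+\norm{T^{\lambda,\mu}_{\Delta,n}(S)})\leq C_{1}(1+C_{0})\norm{S}$ for all $S\in\La(\ell_{p},\ell_{q})$ with $\norm{S}\leq 1$ and all $n$. This establishes \eqref{boundedness condition} with constant $C_{2}:=C_{1}(1+C_{0})$, depending only on $p,q$. Finally, Proposition \ref{commutator estimates diagonalizable} gives
\begin{align*}
\norm{f(B)S-Sf(A)}_{\La(\ell_{p},\ell_{q})}\leq C_{2}\,\diagA\,\diagB\,\norm{BS-SA}_{\La(\ell_{p},\ell_{q})}
\end{align*}
for all $S\in\La(\ell_{p},\ell_{q})$, which is the claim with $C=C_{2}$.

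The main obstacle is genuinely just the input Proposition \ref{Pil-Ben}$(i)$ — the boundedness of the classical triangular truncation on $\La(\ell_{p},\ell_{q})$ in this index range, due to Bennett — which is the only place the restriction $p<q$, $p=q=1$, $p=q=\infty$ enters (indeed part $(ii)$ shows it fails for $1\neq p\geq q\neq\infty$). Everything else is bookkeeping: checking Assumption \ref{basis assumption} for the standard bases, keeping the $\ce_0$ substitution consistent in the $\infty$ case, and verifying that the hypotheses of Propositions \ref{commutator estimates diagonalizable}, \ref{truncation estimate_inf}, and Lemma \ref{crucial lemma} are met (in particular that $\La(\ell_p,\ell_q)$, as a Banach ideal in itself, satisfies the compactness hypothesis used in Proposition \ref{truncation estimate_inf}, or else falling back on the purely operator-norm statement in Proposition \ref{commutator estimates diagonalizable} and Remark \ref{other ideals}). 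One should also note the constant is independent of $A$ and $B$ once their diagonalizability constants are controlled, as emphasized in the introduction.
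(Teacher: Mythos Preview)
Your proposal is correct and follows exactly the same route as the paper's own proof, which simply reads ``combine Propositions \ref{commutator estimates diagonalizable} and \ref{truncation estimate_inf} and Lemma \ref{crucial lemma} with Proposition \ref{Pil-Ben} $(i)$.'' Your write-up merely unpacks this combination in more detail.
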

\begin{proof}
Simply combine Propositions \ref{commutator estimates diagonalizable} and \ref{truncation estimate_inf} and Lemma \ref{crucial lemma} with Proposition \ref{Pil-Ben} $(i)$.
\end{proof}

We single out the specific case in Theorem \ref{truncation estimate_inf_lplq} where $p=q=1$ or $p=q=\infty$ and $S$ is the identity operator.

\begin{corollary}\label{lipschitz l1}
There exists a universal constant $C\geq 0$ such that
\begin{align*}
\norm{f(B)-f(A)}_{\La(\ell_{1})}\leq C\diagA\diagB \norm{B-A}_{\La(\ell_{1})}
\end{align*}
for all $A,B\in\La_{\ud}(\ell_{1})$ with real spectrum.
\end{corollary}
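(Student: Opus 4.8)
The plan is to derive Corollary \ref{lipschitz l1} directly from Theorem \ref{truncation estimate_inf_lplq} by specializing the parameters. First I would set $p=q=1$, which is one of the admissible cases in Theorem \ref{truncation estimate_inf_lplq} (it falls under the clause $p=q=1$). Then $\La(\ell_{p},\ell_{q})=\La(\ell_{1})$, and the theorem provides a constant $C\geq 0$ so that
\begin{align*}
\norm{f(B)S-Sf(A)}_{\La(\ell_{1})}\leq C\diagA\diagB\norm{BS-SA}_{\La(\ell_{1})}
\end{align*}
for all $A,B\in\La_{\ud}(\ell_{1})$ with real spectrum and all $S\in\La(\ell_{1})$.

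The second step is to take $S:=\mathrm{I}_{\ell_{1}}$, the identity operator on $\ell_{1}$. Since $\mathrm{I}_{\ell_{1}}\in\La(\ell_{1})=\La(\ell_{p},\ell_{q})$ in this case, it is an admissible choice of $S$. Substituting $S=\mathrm{I}_{\ell_{1}}$ gives $f(B)S-Sf(A)=f(B)-f(A)$ and $BS-SA=B-A$, so the estimate reduces immediately to
\begin{align*}
\norm{f(B)-f(A)}_{\La(\ell_{1})}\leq C\diagA\diagB\norm{B-A}_{\La(\ell_{1})},
\end{align*}
which is exactly the claimed inequality. The constant $C$ is universal in the sense that it does not depend on $A$ or $B$ (only the factors $\diagA$ and $\diagB$ do), so the statement about a universal constant is justified.

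There is essentially no obstacle here: this is a routine specialization and there is no hard step. The only minor point to note is that taking $S$ to be the identity is legitimate precisely because $p=q$, so that $\La(\ell_{p},\ell_{q})$ is an algebra containing $\mathrm{I}_{\ell_{1}}$; for $p<q$ the identity would not lie in $\La(\ell_{p},\ell_{q})$, which is why the corresponding Lipschitz statement is only extracted in the diagonal cases $p=q=1$ (and, analogously, $p=q=\infty$ with $\ell_{\infty}$ replaced by $\ce_{0}$, giving estimate \eqref{Lip_est_c0_Intro}).
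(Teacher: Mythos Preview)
Your proposal is correct and matches the paper's approach exactly: the paper simply states that this corollary is the special case of Theorem \ref{truncation estimate_inf_lplq} with $p=q=1$ and $S$ the identity operator, without giving any further argument.
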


\begin{corollary}\label{lipschitz c0}
There exists a universal constant $C\geq 0$ such that
\begin{align*}
\norm{f(B)-f(A)}_{\La(\ce_{0})}\leq C\diagA\diagB \norm{B-A}_{\La(\ce_{0})}
\end{align*}
for all $A,B\in\La_{\ud}(\ce_{0})$ with real spectrum.
\end{corollary}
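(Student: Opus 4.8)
The plan is to deduce Corollary \ref{lipschitz c0} directly from Theorem \ref{truncation estimate_inf_lplq} by specializing the parameters. Indeed, the case $p=q=\infty$ is explicitly allowed in the hypotheses of Theorem \ref{truncation estimate_inf_lplq}, with the standing convention (stated in the theorem and in Remark \ref{c0}) that $\ell_{\infty}$ is to be replaced by $\ce_{0}$ throughout. So the first and essentially only step is to apply Theorem \ref{truncation estimate_inf_lplq} with this choice of indices: there is a constant $C\geq 0$ such that
\begin{align*}
\norm{f(B)S-Sf(A)}_{\La(\ce_{0})}\leq C\diagA\diagB\norm{BS-SA}_{\La(\ce_{0})}
\end{align*}
for all $A,B\in\La_{\ud}(\ce_{0})$ with real spectrum and all $S\in\La(\ce_{0})$, where $f(t)=\abs{t}$.

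Second, I would take $S:=\mathrm{I}_{\ce_{0}}$, the identity operator on $\ce_{0}$. Then $f(B)S-Sf(A)=f(B)-f(A)=\abs{B}-\abs{A}$ and $BS-SA=B-A$, so the estimate becomes
\begin{align*}
\norm{\abs{B}-\abs{A}}_{\La(\ce_{0})}\leq C\diagA\diagB\norm{B-A}_{\La(\ce_{0})},
\end{align*}
which is exactly the claimed inequality. The constant $C$ is the one furnished by Theorem \ref{truncation estimate_inf_lplq} for $p=q=\infty$, hence is universal (independent of $A$, $B$ and $S$), so it may be called a universal constant in the statement.

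There is essentially no obstacle here: the corollary is a direct specialization. The only point that requires a moment's care is the bookkeeping of the $\ell_{\infty}/\ce_{0}$ convention and the verification that $\ce_{0}$ indeed satisfies the structural hypotheses needed to invoke Theorem \ref{truncation estimate_inf_lplq} — namely that $\ce_{0}$ has an unconditional Schauder basis (the standard unit vector basis, for which Assumption \ref{basis assumption} holds) — but this is routine and was already recorded in Section \ref{lp-spaces}. Thus the proof is a one-line reduction, parallel to the proof of Corollary \ref{lipschitz l1}.
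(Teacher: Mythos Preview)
Your proposal is correct and matches the paper's approach exactly: the corollary is obtained by specializing Theorem \ref{truncation estimate_inf_lplq} to $p=q=\infty$ (with $\ell_{\infty}$ replaced by $\ce_{0}$) and taking $S=\mathrm{I}_{\ce_{0}}$. The paper does not give a separate proof, noting only that this singles out the case $p=q=\infty$ with $S$ the identity.
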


In the case of Theorem \ref{truncation estimate_inf_lplq} where $p=2$ or $q=2$, we can apply our results to compact self-adjoint operators. By the spectral theorem, any compact self-adjoint operator $A\in\La(\ell_{2})$ has an orthonormal basis of eigenvectors, and therefore $A\in\La_{\ud}(\ell_{2},\lambda,U)$ for some sequence $\lambda$ of real numbers and an isometry $U\in\La(\ell_{2})$. Thus Theorem \ref{truncation estimate_inf_lplq} yields the following corollaries.

\begin{corollary}\label{to l2}
Let $p\in[1,2)$. Then there exists a constant $C\geq 0$ such that the following holds. Let $A\in\La_{\ud}(\ell_{p})$ and let $B\in\La(\ell_{2})$ be compact and self-adjoint. Then
\begin{align*}
\norm{f(B)S-Sf(A)}_{\La(\ell_{p},\ell_{2})}\leq C\diagA \norm{BS-SA}_{\La(\ell_{p},\ell_{2})}
\end{align*}
for all $S\in \La(\ell_{p},\ell_{2})$.
\end{corollary}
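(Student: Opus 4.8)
The plan is to obtain this as an immediate consequence of Theorem \ref{truncation estimate_inf_lplq}, applied to the pair $(p,q)$ with $q=2$: since $p\in[1,2)$ we have $p<q$, so that theorem is available. The only thing to verify is that $B$, being compact and self-adjoint, falls within its scope, i.e.\ that $B\in\La_{\ud}(\ell_{2})$ with real spectrum, and that its diagonalizability constant is controlled.

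First I would invoke the spectral theorem for compact self-adjoint operators on $\ell_{2}$: there is an orthonormal basis $\{g_{k}\}_{k=1}^{\infty}$ of $\ell_{2}$ consisting of eigenvectors of $B$, with associated real eigenvalues $\mu=\{\mu_{k}\}_{k=1}^{\infty}$ satisfying $\mu_{k}\to 0$; in particular $\mu\in\ell_{\infty}$. Defining $V\in\La(\ell_{2})$ by $Vg_{k}:=e_{k}$ for $k\in\N$, where $\{e_{k}\}_{k=1}^{\infty}$ is the standard basis of $\ell_{2}$, one has that $V$ is unitary and $VBV^{-1}=\sum_{k=1}^{\infty}\mu_{k}\Pp_{k}$. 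Hence $B\in\La_{\ud}(\ell_{2},\mu,V)$ and ${\rm sp}(B)=\overline{\{\mu_{k}\}_{k=1}^{\infty}}\subseteq\R$. Since $V$ is an isometry, the definition \eqref{definition diagonalizable constant} gives $\diagB\leq\|V\|_{\La(\ell_{2})}\|V^{-1}\|_{\La(\ell_{2})}=1$; combined with \eqref{spectral estimate} and the fact that $\specB\geq\norm{E({\rm sp}(B))}_{\La(\ell_{2})}=1$, this yields $\diagB=1$.

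Finally I would apply Theorem \ref{truncation estimate_inf_lplq} to $A\in\La_{\ud}(\ell_{p})$ and $B\in\La_{\ud}(\ell_{2})$ (both with real spectrum), obtaining a constant $C\geq 0$ depending only on $p$ such that $\norm{f(B)S-Sf(A)}_{\La(\ell_{p},\ell_{2})}\leq C\diagA\diagB\norm{BS-SA}_{\La(\ell_{p},\ell_{2})}$ for all $S\in\La(\ell_{p},\ell_{2})$, and then substitute $\diagB=1$ to reach the stated inequality.

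There is no real obstacle: the corollary merely repackages Theorem \ref{truncation estimate_inf_lplq} using the standard observation — already recorded in the text preceding the statement — that a compact self-adjoint operator on $\ell_{2}$ is diagonalized by a unitary. The single point deserving a moment's attention is that the framework of Section \ref{unconditional basis spaces}, including the definition of $\diagB$, is tied to the \emph{fixed} standard basis of $\ell_{2}$, so the eigenbasis of $B$ must be transported to it; this is exactly the role of $V$, and it is also the reason the diagonalizability constant (rather than the weaker spectral constant) is what appears, taking the value $1$ in this situation.
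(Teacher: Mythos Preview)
Your proposal is correct and follows exactly the paper's approach: the corollary is deduced from Theorem \ref{truncation estimate_inf_lplq} with $q=2$, using the spectral theorem to see that a compact self-adjoint operator on $\ell_{2}$ is diagonalizable by a unitary, whence $\diagB=1$. The paper records precisely this observation in the sentence immediately preceding the corollary and gives no further proof.
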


\begin{corollary}\label{from l2}
Let $q\in(2,\infty]$. Then there exists a constant $C\geq 0$ such that the following holds (where $\ell_{\infty}$ should be replaced by $\ce_{0}$). Let $A\in\La(\ell_{2})$ be compact and self-adjoint, and let $B\in\La_{\ud}(\ell_{q})$. Then
\begin{align*}
\norm{f(B)S-Sf(A)}_{\La(\ell_{2},\ell_{q})}\leq C\diagB \norm{BS-SA}_{\La(\ell_{2},\ell_{q})}
\end{align*}
for all $S\in \La(\ell_{2},\ell_{q})$.
\end{corollary}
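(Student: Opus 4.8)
The plan is to obtain this as a direct corollary of Theorem \ref{truncation estimate_inf_lplq} applied with the pair of indices $(2,q)$: since $q\in(2,\infty]$ we have $2<q$, so we are in the first case $p<q$ of that theorem, and the only extra input needed is that a compact self-adjoint operator on $\ell_{2}$ is diagonalizable, in the sense of Section \ref{unconditional basis spaces}, with diagonalizability constant at most $1$.

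First I would invoke the spectral theorem for compact self-adjoint operators: there is an orthonormal basis $\{v_{j}\}_{j=1}^{\infty}$ of $\ell_{2}$ with $Av_{j}=\lambda_{j}v_{j}$, where $\lambda_{j}\in\R$ and $\{\lambda_{j}\}_{j=1}^{\infty}\in\ce_{0}\subseteq\ell_{\infty}$ (as $A$ is compact). Define $U\in\La(\ell_{2})$ by declaring $U^{-1}e_{j}:=v_{j}$ for $j\in\N$ and extending by linearity and continuity; then $U^{-1}$, and hence $U$, is unitary, in particular an isometry, and $UAU^{-1}x=\sum_{j=1}^{\infty}\lambda_{j}\Pp_{j}x$ for all $x\in\ell_{2}$. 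Thus $A\in\La_{\ud}(\ell_{2},\{\lambda_{j}\}_{j=1}^{\infty},U)$ with ${\rm sp}(A)\subseteq\R$, and from \eqref{definition diagonalizable constant} together with $\norm{U}_{\La(\ell_{2})}\norm{U^{-1}}_{\La(\ell_{2})}=1$ we obtain $\diagA\leq 1$ (in fact $\specA=\diagA=1$ by \eqref{spectral estimate}).

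Next I would apply Theorem \ref{truncation estimate_inf_lplq} to $A\in\La_{\ud}(\ell_{2})$ and $B\in\La_{\ud}(\ell_{q})$, both with real spectrum --- the requirement ${\rm sp}(B)\subseteq\R$ being implicit, since $f(B)$ must be defined. Since $2<q$, the theorem supplies a constant $C'$, depending only on $q$, with
\[
\norm{f(B)S-Sf(A)}_{\La(\ell_{2},\ell_{q})}\leq C'\diagA\diagB\norm{BS-SA}_{\La(\ell_{2},\ell_{q})}
\]
for all $S\in\La(\ell_{2},\ell_{q})$ (with $\ell_{\infty}$ replaced by $\ce_{0}$ when $q=\infty$). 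Using $\diagA\leq 1$ then gives the assertion with $C:=C'$.

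I do not expect a genuine obstacle beyond Theorem \ref{truncation estimate_inf_lplq} itself, which already absorbs the work done in Proposition \ref{truncation estimate_inf}, Lemma \ref{crucial lemma}, and Bennett's triangular truncation bound in Proposition \ref{Pil-Ben}$(i)$. The one step deserving a careful statement is that the eigenbasis of $A$ can be transported onto the standard basis of $\ell_{2}$ by an isometry, so that $\diagA\leq 1$; the argument is otherwise the mirror image of the proof of Corollary \ref{to l2}, with the roles of the two $\ell_{p}$-spaces interchanged.
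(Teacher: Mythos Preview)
Your proposal is correct and follows exactly the paper's approach: the paper observes, just before stating Corollaries \ref{to l2} and \ref{from l2}, that by the spectral theorem any compact self-adjoint $A\in\La(\ell_{2})$ lies in $\La_{\ud}(\ell_{2},\lambda,U)$ for a real sequence $\lambda$ and a unitary $U$, whence $\diagA=1$, and then applies Theorem \ref{truncation estimate_inf_lplq} with $p=2<q$.
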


\begin{remark}\label{inclusion mapping}
Corollaries \ref{lipschitz l1} and \ref{lipschitz c0} show that the absolute value function is operator Lipschitz on $\ell_{1}$ and $\ce_{0}$, in the following sense. For fixed $M\geq 1$, there exists a constant $C\geq 0$ such that
\begin{align*}
\norm{f(B)-f(A)}\leq C \norm{B-A}
\end{align*}
for all diagonalizable operators $A$ and $B$ such that $\diagA, \diagB\leq M$, and $C$ is independent of $A$ and $B$. 

For $p<q$ a similar statement holds. Restricting $B\in\La_{\ud}(\ell_{q})$ and $f(B)\in\La(\ell_{q})$ to operators from $\ell_{p}$ to $\ell_{q}$, and letting $S$ be the inclusion mapping $\ell_{p}\hookrightarrow \ell_{q}$ in Theorem \ref{truncation estimate_inf_lplq}, one can suggestively write
\begin{align*}
\norm{f(B)-f(A)}_{\La(\ell_{p},\ell_{q})}\leq C\norm{B-A}_{\La(\ell_{p},\ell_{q})},
\end{align*}
for all $A\in\La_{\ud}(\ell_{p})$ and $B\in\La_{\ud}(\ell_{q})$ with $\diagA,\diagB\leq M$. This also applies to Corollaries \ref{to l2} and \ref{from l2}. 
\end{remark}

\subsection{The case $p\geq q$}

We now examine the absolute value function $f$ on $\La(\ell_{p},\ell_{q})$ for $p\geq q$, and obtain the following result. 

\begin{proposition}\label{case p geq q1}
Let $p,q\in(1,\infty]$ with $p\geq q$. Then for each $s<q$ there exists a constant $C\geq 0$ such that the following holds (where $\ell_{\infty}$ should be replaced by $\ce_{0}$). Let $A\in\La_{\ud}(\ell_{p},\lambda,U)$ and $B\in\La_{\ud}(\ell_{q},\mu,V)$ have real spectrum, and let $S\in\La(\ell_{p},\ell_{q})$ be such that $V(BS-SA)U^{-1}\in\La(\ell_{p},\ell_{s})$. Then
\begin{align*}
\norm{f(B)S-Sf(A)}_{\La(\ell_{p},\ell_{q})}\leq C\|U\|_{\La(\ell_{p})}\|V^{-1}\|_{\La(\ell_{q})}\|V(BS-SA)U^{-1}\|_{\La(\ell_{p},\ell_{s})}.
\end{align*}
In particular, if $p=q$ and $V(B-A)U^{-1}\in\La(\ell_{p},\ell_{s})$, then
\begin{align*}
\norm{f(B)-f(A)}_{\La(\ell_{p})}\leq C\|U\|_{\La(\ell_{p})}\|V^{-1}\|_{\La(\ell_{p})}\|V(B-A)U^{-1}\|_{\La(\ell_{p},\ell_{s})}.
\end{align*}
\end{proposition}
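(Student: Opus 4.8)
The plan is to reduce, via the discretization of Lemma~\ref{write as schur multiplier} and Proposition~\ref{truncation estimate_inf}, to a uniform bound on the generalized triangular truncation $T^{\lambda,\mu}_{\Delta,n}$ viewed as an operator \emph{from} $\La(\ell_{p},\ell_{s})$ \emph{to} $\La(\ell_{p},\ell_{q})$ (throughout, $\ell_{\infty}$ is read as $\ce_{0}$). First I would record that $\La(\ell_{p},\ell_{q})$ is a Banach ideal in itself whose unit ball is SOT-closed, so by Lemma~\ref{sccp} it has the strong convex compactness property; hence Proposition~\ref{truncation estimate_inf} may be used with $\Ic=\La(\ell_{p},\ell_{q})$. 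Write $\lambda,\mu$ for the (bounded, real) point spectra of $A,B$, and for $S\in\La(\ell_{p},\ell_{q})$ set $W:=V(BS-SA)U^{-1}$ and $S_{n}:=\sum_{j,k=1}^{n}V^{-1}\Qq_{k}VSU^{-1}\Pp_{j}U$. Lemma~\ref{write as schur multiplier} followed by Proposition~\ref{truncation estimate_inf} then give
\begin{align*}
\norm{f(B)S_{n}-S_{n}f(A)}_{\La(\ell_{p},\ell_{q})}
&\leq\|U\|_{\La(\ell_{p})}\|V^{-1}\|_{\La(\ell_{q})}\norm{T^{\lambda,\mu}_{\ph_{f},n}(W)}_{\La(\ell_{p},\ell_{q})}\\
&\leq C\,\|U\|_{\La(\ell_{p})}\|V^{-1}\|_{\La(\ell_{q})}\Big(\norm{W}_{\La(\ell_{p},\ell_{q})}+\norm{T^{\lambda,\mu}_{\Delta,n}(W)}_{\La(\ell_{p},\ell_{q})}\Big)
\end{align*}
with $C$ absolute. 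Since $s<q$ one has $\norm{W}_{\La(\ell_{p},\ell_{q})}\leq\norm{W}_{\La(\ell_{p},\ell_{s})}$, so the whole statement follows once $\sup_{n}\norm{T^{\lambda,\mu}_{\Delta,n}}_{\La(\La(\ell_{p},\ell_{s}),\,\La(\ell_{p},\ell_{q}))}<\infty$.

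This uniform bound is the crux, and is where the hypotheses $p\geq q$ and $s<q$ are used. Since $p\neq1$, $p\geq s$, $s\neq\infty$ and $q>s$, Proposition~\ref{Pil-Ben}$(iii)$ — applied with the exponent pair $(p,s)$ and the larger target exponent $q$ (with the obvious modification when $q=\infty$, composing with $\ell_{t}\hookrightarrow\ce_{0}$ for some finite $t>s$) — shows $\Tr_{\Delta}$ is bounded from $\La(\ell_{p},\ell_{s})$ to $\La(\ell_{p},\ell_{q})$; compressing by $\sum_{j\leq m}\Pp_{j}$ (norm one on $\ell_{p}$ and $\ell_{s}$) then gives $\sup_{m}\norm{\Tr_{\Delta,m}}_{\La(\La(\ell_{p},\ell_{s}),\,\La(\ell_{p},\ell_{q}))}<\infty$. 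Now $T^{\lambda,\mu}_{\Delta,n}(S)=M\ast S$ for the staircase matrix $M$ supported on $\{1,\dots,n\}^{2}$ determined by $\lambda,\mu$. Permuting the rows and columns of $M$ — isometries of every $\ell_{r}$, hence norm-preserving for the $\La(\La(\ell_{p},\ell_{s}),\La(\ell_{p},\ell_{q}))$-norm — we may assume $\lambda$ and $\mu$ are non-increasing, and deleting the all-zero rows and columns of $M$ does not increase that norm. Listing the surviving $\lambda_{j}$'s and $\mu_{k}$'s in a single non-increasing sequence of length $N\leq 2n$ (breaking ties so that each $\lambda_{j}$ precedes any equal $\mu_{k}$) exhibits $M$ as a \emph{submatrix} of $\Tr_{\Delta,N}$. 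Consequently $M\ast S$ is, up to rearrangement, $\Tr_{\Delta,N}\ast\widetilde S$ where $\widetilde S$ is $S$ spread into the submatrix pattern and padded with zeros; since spreading-and-padding is norm one into $\La(\ell_{p},\ell_{s})$, this yields $\norm{T^{\lambda,\mu}_{\Delta,n}(S)}_{\La(\ell_{p},\ell_{q})}\leq\norm{\Tr_{\Delta,N}}_{\La(\La(\ell_{p},\ell_{s}),\,\La(\ell_{p},\ell_{q}))}\norm{S}_{\La(\ell_{p},\ell_{s})}$, as needed. I expect this step to be the main obstacle: unlike in Theorem~\ref{truncation estimate_inf_lplq} the source and target spaces now differ and $s<q$, so the row-repetition half of Remark~\ref{rem_on_Ben} (hence Lemma~\ref{crucial lemma}) no longer preserves norms and cannot be quoted directly; the submatrix device, which only invokes norm-one coordinate inclusions and projections, is how I would get around it.

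Finally I would combine the two displays to obtain $\norm{f(B)S_{n}-S_{n}f(A)}_{\La(\ell_{p},\ell_{q})}\leq C'\|U\|_{\La(\ell_{p})}\|V^{-1}\|_{\La(\ell_{q})}\norm{W}_{\La(\ell_{p},\ell_{s})}$ with $C'$ independent of $n$, and pass to the limit exactly as in the proof of Proposition~\ref{commutator estimates diagonalizable}: the partial-sum projections $\sum_{j\leq n}\Pp_{j}$ and $\sum_{k\leq n}\Qq_{k}$ are uniformly bounded and converge strongly to the identity on $\ell_{p}$ and $\ell_{q}$, so $S_{n}x\to Sx$, whence $f(B)S_{n}x-S_{n}f(A)x\to f(B)Sx-Sf(A)x$ for every $x\in\ell_{p}$; taking $n\to\infty$ and then the supremum over $\norm{x}_{\ell_{p}}\leq1$ gives the asserted inequality. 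The special case $p=q$ is recovered by taking $S=\mathrm{I}_{\ell_{p}}$.
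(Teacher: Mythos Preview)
Your proof is correct and follows the paper's route almost verbatim: Lemma~\ref{write as schur multiplier}, Proposition~\ref{truncation estimate_inf} with $\Ic=\La(\ell_{p},\ell_{q})$, a bound on $T^{\lambda,\mu}_{\Delta,n}$ as a map $\La(\ell_{p},\ell_{s})\to\La(\ell_{p},\ell_{q})$, Proposition~\ref{Pil-Ben}$(iii)$, and the SOT limit from Proposition~\ref{commutator estimates diagonalizable}. The only divergence is in the truncation bound. The paper simply invokes Lemma~\ref{crucial lemma} ``with $p=r$ and with $q$ and $s$ interchanged''; you instead embed the staircase matrix $M$ as a submatrix of $\Tr_{\Delta,N}$ (with $N\leq 2n$) via the merged non-increasing list of the $\lambda_{j}$'s and $\mu_{k}$'s, and use only norm-one coordinate inclusions and projections. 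Your stated reason for the detour---that the row-repetition half of Remark~\ref{rem_on_Ben} fails here---is traceable to a slip in that remark: the condition $q^{*}\geq s^{*}$ is equivalent to $s\geq q$, not to $s\leq q$ as written, and with that correction the hypotheses of Lemma~\ref{crucial lemma} (source $\La(\ell_{p},\ell_{s})$, target $\La(\ell_{p},\ell_{q})$, $q>s$) \emph{are} satisfied. So the paper's citation is legitimate once the typo is fixed, while your submatrix device is a clean self-contained alternative that bypasses Proposition~\ref{Th_ColRepeating} entirely, at the harmless cost of landing in $\Tr_{\Delta,N}$ with $N\leq 2n$ rather than $N\leq n$.
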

\begin{proof}
Let $R:=V(BS-SA)U^{-1}$. With notation as in Lemma \ref{write as schur multiplier}, 
\begin{align*}
\norm{f(B)S_{n}-S_{n}f(A)}_{\La(\ell_{p},\ell_{q})}\leq \|U\|_{\La(\ell_{p})}\|V^{-1}\|_{\La(\ell_{q})}\norm{T^{\lambda,\mu}_{\ph_{f},n}(R)}_{\La(\ell_{p},\ell_{q})}
\end{align*}
for each $n\in\N$. Proposition \ref{truncation estimate_inf}, Lemma \ref{crucial lemma} (with $p=r$ and with $q$ and $s$ interchanged) and Proposition \ref{Pil-Ben} $(iii)$ (with $q$ and $s$ interchanged) yield a constant $C'\geq 0$ such that
\begin{align*}
\norm{T^{\lambda,\mu}_{\ph_{f},n}(R)}_{\La(\ell_{p},\ell_{q})}\leq C'\left(\norm{R}_{\La(\ell_{p},\ell_{q})}+\norm{R}_{\La(\ell_{p},\ell_{s})}\right).
\end{align*}
Since $\La(\ell_{p},\ell_{s})\hookrightarrow \La(\ell_{p},\ell_{q})$ contractively,
\begin{align*}
\norm{f(B)S_{n}-S_{n}f(A)}_{\La(\ell_{p},\ell_{q})}\leq C\|U\|_{\La(\ell_{p})}\|V^{-1}\|_{\La(\ell_{q})}\|V(BS-SA)U^{-1}\|_{\La(\ell_{p},\ell_{s})}
\end{align*} 
for all $n\in\N$, where $C=2C'$. Finally, as in the proof of Proposition \ref{commutator estimates diagonalizable}, one lets $n$ tend to infinity to conclude the proof.
\end{proof}

In the same way, appealing to the second part of Proposition \ref{Pil-Ben} (iii), one deduces the following result.

\begin{proposition}\label{case p geq q2}
Let $p,q\in[1,\infty)$ with $p\geq q$. Then for each $r>p$ there exists a constant $C\geq 0$ such that the following holds (where $\ell_{\infty}$ should be replaced by $\ce_{0}$). Let $A\in\La_{\ud}(\ell_{p},\lambda,U)$ and $B\in\La_{\ud}(\ell_{q},\mu,V)$ have real spectrum, and let $S\in\La(\ell_{p},\ell_{q})$ be such that $V(BS-SA)U^{-1}\in\La(\ell_{r},\ell_{q})$. Then
\begin{align*}
\norm{f(B)S-Sf(A)}_{\La(\ell_{p},\ell_{q})}\leq C\|U\|_{\La(\ell_{p})}\|V^{-1}\|_{\La(\ell_{q})}\|V(BS-SA)U^{-1}\|_{\La(\ell_{r},\ell_{q})}.
\end{align*}
In particular, if $p=q$ and $V(B-A)U^{-1}\in\La(\ell_{r},\ell_{q})$, then
\begin{align*}
\norm{f(B)-f(A)}_{\La(\ell_{p})}\leq C\|U\|_{\La(\ell_{p})}\|V^{-1}\|_{\La(\ell_{p})}\|V(B-A)U^{-1}\|_{\La(\ell_{r},\ell_{q})}.
\end{align*}
\end{proposition}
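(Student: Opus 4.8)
The plan is to repeat the proof of Proposition \ref{case p geq q1} with the source exponents playing the role that the target exponents played there, and with the first part of Proposition \ref{Pil-Ben}$(iii)$ replaced by the second.

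Set $R:=V(BS-SA)U^{-1}$. Since $BS-SA\in\La(\ell_{p},\ell_{q})$ always and $U\in\La(\ell_{p})$, $V\in\La(\ell_{q})$, the hypothesis says $R\in\La(\ell_{r},\ell_{q})$, and this space embeds contractively in $\La(\ell_{p},\ell_{q})$ because $\norm{x}_{\ell_{r}}\leq\norm{x}_{\ell_{p}}$ whenever $r\geq p$. With notation as in Lemma \ref{write as schur multiplier}, that lemma gives for every $n\in\N$
\begin{align*}
\norm{f(B)S_{n}-S_{n}f(A)}_{\La(\ell_{p},\ell_{q})}\leq \norm{U}_{\La(\ell_{p})}\norm{V^{-1}}_{\La(\ell_{q})}\,\norm{T^{\lambda,\mu}_{\ph_{f},n}(R)}_{\La(\ell_{p},\ell_{q})},
\end{align*}
so the argument reduces to an $n$-uniform bound for $\norm{T^{\lambda,\mu}_{\ph_{f},n}(R)}_{\La(\ell_{p},\ell_{q})}$ by a fixed multiple of $\norm{R}_{\La(\ell_{r},\ell_{q})}$.

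Applying Proposition \ref{truncation estimate_inf} with the Banach ideal $\Ic=\La(\ell_{p},\ell_{q})$ (which has the strong convex compactness property) and $S=R$, one obtains a universal constant $C_{0}$ with
\begin{align*}
\norm{T^{\lambda,\mu}_{\ph_{f},n}(R)}_{\La(\ell_{p},\ell_{q})}\leq C_{0}\Big(\norm{R}_{\La(\ell_{p},\ell_{q})}+\norm{T^{\lambda,\mu}_{\Delta,n}(R)}_{\La(\ell_{p},\ell_{q})}\Big).
\end{align*}
The truncation term is where membership $R\in\La(\ell_{r},\ell_{q})$ gets used. By Lemma \ref{crucial lemma}, applied with $(p,q,r,s)$ replaced by $(r,q,p,q)$ (the required inequality $p\leq r$ holds since $r>p$), $\norm{T^{\lambda,\mu}_{\Delta,n}}_{\La(\La(\ell_{r},\ell_{q}),\La(\ell_{p},\ell_{q}))}\leq\norm{\Tr_{\Delta,n}}_{\La(\La(\ell_{r},\ell_{q}),\La(\ell_{p},\ell_{q}))}$; and by the second part of Proposition \ref{Pil-Ben}$(iii)$, applied with $(p,q,r)$ replaced by $(r,q,p)$ --- the hypotheses $1\neq r\geq q\neq\infty$ and $p<r$ are met, and the endpoint $r=\infty$, with $\ell_{r}$ read as $\ce_{0}$, is included --- the operator $\Tr_{\Delta}$ maps $\La(\ell_{r},\ell_{q})$ boundedly into $\La(\ell_{p},\ell_{q})$; hence so does each $\Tr_{\Delta,n}$, with a bound independent of $n$ (since $\Tr_{\Delta,n}$ is a compression of $\Tr_{\Delta}$ and the coordinate projections have norm $1$ by Assumption \ref{basis assumption}). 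Combining this with the contractive inclusion above yields a constant $C'$, depending only on $p,q,r$, such that $\norm{T^{\lambda,\mu}_{\ph_{f},n}(R)}_{\La(\ell_{p},\ell_{q})}\leq C'\norm{R}_{\La(\ell_{r},\ell_{q})}$ for all $n$.

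Finally I would let $n\to\infty$ exactly as at the end of the proof of Proposition \ref{commutator estimates diagonalizable}: one has $S_{n}x\to Sx$ for every $x$, hence $f(B)S_{n}x-S_{n}f(A)x\to f(B)Sx-Sf(A)x$, so that $\norm{f(B)S-Sf(A)}_{\La(\ell_{p},\ell_{q})}\leq\limsup_{n}\norm{f(B)S_{n}-S_{n}f(A)}_{\La(\ell_{p},\ell_{q})}$; this gives the asserted estimate with $C=C'$, and the displayed special case follows on taking $X=Y=\ell_{p}$ (so $q=p$) and $S=\mathrm{I}_{\ell_{p}}$. I expect the only point needing care to be the exponent bookkeeping of the previous paragraph: checking that the constraint $r\leq p$ of Lemma \ref{crucial lemma} and the constraint $1\neq p\geq q\neq\infty$ of Proposition \ref{Pil-Ben}$(iii)$ are each satisfied once the exponents are in their correct slots, and handling the endpoint $r=\infty$ consistently. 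Everything else is a line-by-line transcription of the $p<q$ argument.
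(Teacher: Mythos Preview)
Your proposal is correct and matches the paper's approach exactly: the paper proves this proposition by the single sentence ``In the same way, appealing to the second part of Proposition \ref{Pil-Ben} $(iii)$, one deduces the following result,'' and you have carried out precisely that transcription of the proof of Proposition \ref{case p geq q1}, with the exponent bookkeeping done correctly (Lemma \ref{crucial lemma} applied with $(r,q,p,q)$ in place of $(p,q,r,s)$, and Proposition \ref{Pil-Ben}$(iii)$ with $(r,q,p)$ in place of $(p,q,r)$). The only slip is cosmetic: in your final line you call it ``the $p<q$ argument,'' but you mean the argument for Proposition \ref{case p geq q1}.
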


We single out the case where $p=q=2$. Here we write $f(A)=\abs{A}$ for a normal operator $A\in\La(\ell_{2})$, since then $f(A)$ is equal to $\abs{A}:=\sqrt{A^{*}A}$. Note also that the following result applies in particular to compact self-adjoint operators. For simplicity of the presentation we only consider $\epsilon\in(0,1]$, it should be clear how the result extends to other $\epsilon>0$.

\begin{corollary}\label{case p=q=2}
For each $\epsilon\in(0,1]$ there exists a constant $C\geq 0$ such that the following holds. Let $A\in\La_{\ud}(\ell_{2},\lambda,U)$ and $B\in\La_{\ud}(\ell_{2},\mu,V)$ be self-adjoint, with $U$ and $V$ unitaries, and let $S\in\La(\ell_{2})$. If $V(BS-SA)U^{-1}\in\La(\ell_{2},\ell_{2-\epsilon})$, then
\begin{align*}
\norm{\abs{B}S-S\abs{A}}_{\La(\ell_{2})}\leq C\|V(BS-SA)U^{-1}\|_{\La(\ell_{2},\ell_{2-\epsilon})}
\end{align*}
and if $V(BS-SA)U^{-1}\in\La(\ell_{2+\epsilon},\ell_{2})$ then
\begin{align*}
\norm{\abs{B}S-S\abs{A}}_{\La(\ell_{2})}\leq C\|V(BS-SA)U^{-1}\|_{\La(\ell_{2+\epsilon},\ell_{2})}.
\end{align*}
In particular, if $V(B-A)U^{-1}\in\La(\ell_{2},\ell_{2-\epsilon})$, then
\begin{align*}
\norm{\abs{B}-\abs{A}}_{\La(\ell_{2})}\leq C\|V(B-A)U^{-1}\|_{\La(\ell_{2},\ell_{2-\epsilon})}
\end{align*}
and if $V(B-A)U^{-1}\in\La(\ell_{2+\epsilon},\ell_{2})$, then
\begin{align*}
\norm{\abs{B}-\abs{A}}_{\La(\ell_{2})}\leq C\|V(B-A)U^{-1}\|_{\La(\ell_{2+\epsilon},\ell_{2})}.
\end{align*}
\end{corollary}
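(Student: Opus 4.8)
The plan is to obtain the statement as a direct specialization of Propositions~\ref{case p geq q1} and~\ref{case p geq q2} to the diagonal case $p=q=2$. First I would record two preliminary observations. Since $A$ and $B$ are self-adjoint while $U$ and $V$ are unitary, the eigenvalue sequences $\lambda$ and $\mu$ necessarily consist of real numbers (as $\sum_{j}\lambda_{j}\Pp_{j}=UAU^{-1}$ must be self-adjoint), so $A$ and $B$ have real spectrum and the hypotheses of both propositions are met; moreover, by the functional calculus of Section~\ref{scalar type operators}, $f(A)=\abs{A}=\sqrt{A^{*}A}$ and $f(B)=\abs{B}$, as noted just before the statement. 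Second, $\norm{U}_{\La(\ell_{2})}=\norm{V^{-1}}_{\La(\ell_{2})}=1$ since $U,V$ are unitaries.

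For the first inequality I would invoke Proposition~\ref{case p geq q1} with $p=q=2\in(1,\infty]$ (so $p\geq q$) and with $s:=2-\epsilon$; since $\epsilon\in(0,1]$ we have $1\leq s<2=q$, so $s$ is admissible. For any $S\in\La(\ell_{2})$ with $V(BS-SA)U^{-1}\in\La(\ell_{2},\ell_{2-\epsilon})$, Proposition~\ref{case p geq q1} combined with the two preliminary observations gives exactly
\begin{align*}
\norm{\abs{B}S-S\abs{A}}_{\La(\ell_{2})}\leq C\,\norm{V(BS-SA)U^{-1}}_{\La(\ell_{2},\ell_{2-\epsilon})},
\end{align*}
where $C$ is the constant furnished by that proposition and depends only on $\epsilon$. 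Symmetrically, for the second inequality I would apply Proposition~\ref{case p geq q2} with $p=q=2\in[1,\infty)$ and $r:=2+\epsilon>2=p$, which is admissible, yielding the analogous estimate with $\norm{V(BS-SA)U^{-1}}_{\La(\ell_{2+\epsilon},\ell_{2})}$ on the right-hand side. Taking the larger of the two constants (or renaming) gives a single $C=C(\epsilon)$ valid for both.

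Finally, the two displayed statements involving $B-A$ are just the case $S:=\mathrm{I}_{\ell_{2}}\in\La(\ell_{2})$ of the inequalities just established, since then $BS-SA=B-A$ and $\abs{B}S-S\abs{A}=\abs{B}-\abs{A}$. I do not expect any genuine obstacle: the entire content is the bookkeeping of checking that the index constraints of the two propositions are satisfied for $\epsilon\in(0,1]$ and that the unitary factors contribute a constant $1$; the analytic work has already been done in Propositions~\ref{case p geq q1} and~\ref{case p geq q2} (ultimately resting on Proposition~\ref{truncation estimate_inf}, Lemma~\ref{crucial lemma}, and Proposition~\ref{Pil-Ben}$(iii)$).
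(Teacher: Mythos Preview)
Your proposal is correct and matches the paper's approach: the corollary is stated without proof as an immediate specialization of Propositions~\ref{case p geq q1} and~\ref{case p geq q2} to $p=q=2$, and your bookkeeping (real spectrum from self-adjointness, $\norm{U}=\norm{V^{-1}}=1$ from unitarity, $s=2-\epsilon$ and $r=2+\epsilon$ admissible, $S=\mathrm{I}_{\ell_{2}}$ for the last two inequalities) is exactly what is needed.
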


\section{Lipschitz estimates on the ideal of $p$-summing operators}\label{p-summing operators}

Let $H$ be a separable infinite-dimensional Hilbert space. It was shown in \cite{A-1982} that a matrix $M=\{m_{jk}\}_{j,k=1}^\infty$ is a Schur multiplier on the Hilbert-Schmidt class $\mathcal{S}_{2}\subset\La(H)$ if and only if $\sup_{j,k}|m_{jk}|<\infty$. 
By \cite{Pelcz}, $\mathcal{S}_{2}$ coincides with the Banach ideal $\Pi_{p}(H)$ of all $p$-summing operators (see the definition below) for all $1\leq p<\infty$. Hence a matrix $M=\{m_{jk}\}_{j,k=1}^\infty$ is a Schur multiplier on $\Pi_{p}(H)$ if and only if $\sup_{j,k}|m_{jk}|<\infty$.
In Lemma \ref{any bounded seq} we show that the same statement is true for the Banach ideal $\Pi_{p}(\ell_{p^*},\ell_{p})$ in $\La(\ell_{p^{*}},\ell_{p})$, for all $1<p<\infty$. As a corollary we obtain operator Lipschitz estimates on $\Pi_{p}(\ell_{p^{*}},\ell_{p})$ for each Lipschitz function $f$ on $\C$.

Let $X$ and $Y$ be Banach spaces and $1\leq p< \infty$. An operator
$S:X\to Y$ is \emph{$p$-absolutely summing} if there exists a
constant $C$ such that for each $n\in\N$ and each collection
$\{x_{j}\}_{j=1}^{n}\subseteq X$,
\begin{equation}\label{def_p_sum}
\Big(\sum_{j=1}^{n}\norm{S(x_{j})}_{Y}^{p}\Big)^{\frac{1}{p}}\leq C
\sup\limits_{\|x^{*}\|_{X^{*}}\leq 1} \Big(\sum_{j=1}^{n}\abs{\langle x^{*},
x_{j}\rangle}^{p}\Big)^{\frac{1}{p}}.
\end{equation}
The smallest such constant is denoted by $\pi_{p}$, and $\Pi_{p}(X,Y)$ is the space of $p$-absolutely summing operators from $X$ to $Y$. We let $\Pi_{p}(X):=\Pi_{p}(X,X)$. By Propositions 2.3, 2.4 and 2.6 in \cite{Diestel_absPsum}, $(\Pi_{p}(X,Y),\pi_{p}(\cdot))$ is a Banach ideal in $\La(X,Y)$.

Below we consider $p$-absolutely summing operators from $\ell_{p^{*}}$ to $\ell_{p}$. We first present the following result.

\begin{lemma}\label{lem_p_sum_criteria}
Let $1<p<\infty$ and $S=\{s_{jk}\}_{j,k=1}^{\infty}$. Then $S\in \Pi_{p}(\ell_{p^*},\ell_{p})$ if and only if
\begin{align*}
c_{p}:=\Big(\sum_{j=1}^\infty\sum_{k=1}^\infty|s_{jk}|^{p}\Big)^{\frac{1}{p}}<\infty.
\end{align*}
In this case, $\pi_{p}(S)= c_{p}$.
\end{lemma}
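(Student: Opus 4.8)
The plan is to identify $\Pi_p(\ell_{p^*},\ell_p)$ with the space of matrices having square-summable entries in the appropriate sense, by computing directly from the defining inequality \eqref{def_p_sum}. The key observation is that on $\ell_{p^*}$ the supremum on the right-hand side of \eqref{def_p_sum}, taken over unit vectors $x^*\in\ell_{p^*}^{*}=\ell_p$, has an explicit form when the $x_j$ are chosen among the coordinate basis vectors.

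\textbf{Sufficiency.} First I would show that if $c_p<\infty$ then $S\in\Pi_p(\ell_{p^*},\ell_p)$ with $\pi_p(S)\le c_p$. Write $S=\sum_{j}(e_j^{*}\otimes Se_j)$ heuristically; more precisely, decompose $S$ as a sum of rank-one operators $e_k^{*}\otimes y^{(k)}$, where $y^{(k)}=Se_k=\{s_{jk}\}_{j=1}^{\infty}\in\ell_p$, using that $\{e_k\}$ is the standard basis of $\ell_{p^*}$. Since $(\Pi_p(\ell_{p^*},\ell_p),\pi_p)$ is a Banach ideal, by the second defining property of a Banach ideal one has $\pi_p(e_k^{*}\otimes y^{(k)})=\|e_k^{*}\|\,\|y^{(k)}\|_{\ell_p}=\big(\sum_j|s_{jk}|^p\big)^{1/p}$. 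A direct triangle-inequality estimate then gives $\pi_p(S)\le\sum_k\big(\sum_j|s_{jk}|^p\big)^{1/p}$, which is not quite $c_p$; so instead I would use a sharper argument. The cleanest route is to verify \eqref{def_p_sum} by hand: given $\{x_j\}_{j=1}^{n}\subseteq\ell_{p^*}$, estimate $\big(\sum_j\|Sx_j\|_{\ell_p}^p\big)^{1/p}$ by writing $(Sx_j)_i=\sum_k s_{ik}(x_j)_k$ and using H\"older in the $k$-variable together with the standard identity that $\sup_{\|x^*\|_{\ell_p}\le1}\big(\sum_j|\langle x^*,x_j\rangle|^p\big)^{1/p}$ equals the norm of the operator $\ell_{p^*}\to\ell_p$, $x^*\mapsto\{\langle x^*,x_j\rangle\}_j$; a factorization/duality computation identifies this with the quantity controlling $c_p$. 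I expect this to reduce, after reindexing, to the elementary fact that an $\ell_p$-valued sequence whose entries come from the columns of $S$ has the right norm precisely when $c_p<\infty$.

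\textbf{Necessity and sharpness.} Conversely, assuming $S\in\Pi_p(\ell_{p^*},\ell_p)$ with constant $C=\pi_p(S)$, I would test \eqref{def_p_sum} on the finite family $\{e_k\}_{k=1}^{n}$ of coordinate vectors. The left-hand side is $\big(\sum_{k=1}^{n}\|Se_k\|_{\ell_p}^p\big)^{1/p}=\big(\sum_{k=1}^{n}\sum_{j=1}^{\infty}|s_{jk}|^p\big)^{1/p}$. For the right-hand side I need $\sup_{\|x^*\|_{\ell_p}\le1}\big(\sum_{k=1}^{n}|\langle x^*,e_k\rangle|^p\big)^{1/p}=\sup_{\|x^*\|_{\ell_p}\le1}\big(\sum_{k=1}^{n}|x^*_k|^p\big)^{1/p}=1$. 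Hence $\big(\sum_{k=1}^{n}\sum_j|s_{jk}|^p\big)^{1/p}\le\pi_p(S)$ for every $n$; letting $n\to\infty$ gives $c_p\le\pi_p(S)<\infty$. Combined with the bound $\pi_p(S)\le c_p$ from the first part, this yields $\pi_p(S)=c_p$.

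\textbf{Main obstacle.} The nontrivial half is the inequality $\pi_p(S)\le c_p$: the crude decomposition into columns only gives $\sum_k\|\text{column}_k\|_{\ell_p}$, which can be strictly larger than $c_p=\big(\sum_k\|\text{column}_k\|_{\ell_p}^p\big)^{1/p}$. The point where care is needed is thus the direct verification of \eqref{def_p_sum}, where one must exploit that the test vectors $x_j$ live in $\ell_{p^*}$ and that the right-hand side supremum is computed against $\ell_p$; the duality between $p$ and $p^*$ is exactly what makes the exponents match so that $\sum_j\sum_k|s_{jk}|^p$ (rather than a mixed norm) is the sharp constant. I would handle this by reducing to the case where the $x_j$ are the first $n$ coordinate vectors (which is the extremal configuration, since $p$-summing norms over $\ell_{p^*}$ are attained on the basis by a convexity/averaging argument), at which point the computation collapses to the one in the necessity part and sharpness is immediate.
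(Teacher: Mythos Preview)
Your \emph{necessity} argument is correct and is exactly what the paper does: test \eqref{def_p_sum} on the coordinate vectors $x_{k}=e_{k}$, $1\le k\le n$, observe that the right-hand side equals $1$, and let $n\to\infty$ to get $c_{p}\le\pi_{p}(S)$.

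For \emph{sufficiency} the paper does not give a proof at all; it simply cites \cite[Example~2.11]{Diestel_absPsum}. Your attempt to supply a proof, however, has a genuine gap. The key step you propose --- ``reducing to the case where the $x_{j}$ are the first $n$ coordinate vectors, which is the extremal configuration by a convexity/averaging argument'' --- is not justified and, as stated, is circular: knowing a posteriori that $\pi_{p}(S)=c_{p}$ one sees that the basis vectors realise the supremum, but there is no a priori convexity or averaging argument that singles them out among all weakly $p$-summable families in $\ell_{p^{*}}$. The map $(x_{1},\dots,x_{n})\mapsto\sum_{j}\|Sx_{j}\|_{\ell_{p}}^{p}$ is convex, not linear, so the extreme points of the weak-$\ell_{p}$ unit ball would be needed, and these are not the coordinate tuples.

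The correct direct argument (which is what \cite[Example~2.11]{Diestel_absPsum} does) avoids extremality entirely. View the $i$-th row $s_{i}:=(s_{ik})_{k}$ of $S$ as an element of $\ell_{p}=(\ell_{p^{*}})^{*}$; then $(Sx_{j})_{i}=\langle s_{i},x_{j}\rangle$, and for any $x_{1},\dots,x_{n}\in\ell_{p^{*}}$,
\[
\sum_{j=1}^{n}\|Sx_{j}\|_{\ell_{p}}^{p}
=\sum_{i}\sum_{j=1}^{n}\bigl|\langle s_{i},x_{j}\rangle\bigr|^{p}
\le \sum_{i}\|s_{i}\|_{\ell_{p}}^{p}\,\sup_{\|x^{*}\|_{\ell_{p}}\le 1}\sum_{j=1}^{n}|\langle x^{*},x_{j}\rangle|^{p}
= c_{p}^{p}\,\sup_{\|x^{*}\|\le 1}\sum_{j=1}^{n}|\langle x^{*},x_{j}\rangle|^{p},
\]
which is precisely \eqref{def_p_sum} with constant $c_{p}$. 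This is the missing ingredient in your plan; the ``H\"older in the $k$-variable'' you mention is not needed, and the reduction to basis vectors should be discarded.
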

\begin{proof}
In \cite[Example 2.11]{Diestel_absPsum} it is shown that, if $c_{p}<\infty$, then $S\in \Pi_{p}(\ell_{p^*},\ell_{p})$ with $\pi_{p}(S)\leq c_{p}$. For the converse, let $n\in\N$ and let $x_{j}:=e_{j}\in \ell_{p^{*}}$ for $1\leq j\leq n$. By \eqref{def_p_sum} (with $X=\ell_{p^{*}}$ and $Y=\ell_{p}$),
\begin{align*}
\Big(\sum_{k=1}^{n}\sum_{j=1}^{\infty}\abs{s_{jk}}^{p}\Big)^{\frac{1}{p}}\leq \pi_{p}(S).
\end{align*}
Letting $n$ tend to infinity concludes the proof.
\end{proof}

For the following corollary of Lemma \ref{lem_p_sum_criteria}, recall that a matrix $M$ is said to be a Schur multiplier on a subspace $\Ic\subseteq \La(\ell_{p},\ell_{q})$ if $S\mapsto M\ast S$ is a bounded map on $\Ic$. Recall also the definition of the standard triangular truncation $\Tr_{\Delta}$ from \eqref{classical_tr_tr}.

\begin{corollary}\label{any bounded seq} 
Let $p\in(1,\infty)$ and let $M=\{m_{jk}\}_{j,k=1}^\infty$ be a matrix. Then $M$ is a Schur multiplier on $\Pi_{p}(\ell_{p^{*}},\ell_{p})$ if and only if $\sup_{j,k}\,\abs{m_{jk}}<\infty$. In this case, 
\begin{align*}
\norm{M}_{\La(\Pi_{p}(\ell_{p^{*}},\ell_{p}))}=\sup_{j,k}\,\abs{m_{jk}}.
\end{align*}
In particular, $\Tr_{\Delta}\in\La(\Pi_{p}(\ell_{p^{*}},\ell_{p}))$ with
$\norm{\Tr_{\Delta}}_{\La(\Pi_{p}(\ell_{p^{*}},\ell_{p}))}=1$.
\end{corollary}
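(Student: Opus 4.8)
The plan is to read everything off Lemma \ref{lem_p_sum_criteria}, which identifies $\Pi_{p}(\ell_{p^{*}},\ell_{p})$ with a weighted sequence space of matrices: a matrix $S=\{s_{jk}\}_{j,k=1}^{\infty}$ belongs to $\Pi_{p}(\ell_{p^{*}},\ell_{p})$ exactly when $c_{p}(S):=\big(\sum_{j,k}\abs{s_{jk}}^{p}\big)^{1/p}<\infty$, and in that case $\pi_{p}(S)=c_{p}(S)$. Granting this, the Schur product becomes pointwise multiplication of the matrix entries, so both directions reduce to an elementary comparison.

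For the easy direction, suppose $C:=\sup_{j,k}\abs{m_{jk}}<\infty$. For $S\in\Pi_{p}(\ell_{p^{*}},\ell_{p})$ the Schur product $M\ast S$ has entries $m_{jk}s_{jk}$, so Lemma \ref{lem_p_sum_criteria} (applied to $M\ast S$ and then to $S$) gives
\[
\pi_{p}(M\ast S)=\Big(\sum_{j,k}\abs{m_{jk}}^{p}\abs{s_{jk}}^{p}\Big)^{1/p}\leq C\Big(\sum_{j,k}\abs{s_{jk}}^{p}\Big)^{1/p}=C\,\pi_{p}(S).
\]
Hence $S\mapsto M\ast S$ is bounded on $\Pi_{p}(\ell_{p^{*}},\ell_{p})$ with norm at most $C$. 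Conversely, suppose $M$ is a Schur multiplier on $\Pi_{p}(\ell_{p^{*}},\ell_{p})$. For fixed $j_{0},k_{0}$ test against the matrix unit $S^{(j_{0},k_{0})}:=e_{k_{0}}^{*}\!\otimes e_{j_{0}}$, which has a single nonzero entry, equal to $1$, in position $(j_{0},k_{0})$; by the Banach ideal axiom (or Lemma \ref{lem_p_sum_criteria}) $\pi_{p}(S^{(j_{0},k_{0})})=1$. Since $M\ast S^{(j_{0},k_{0})}=m_{j_{0}k_{0}}S^{(j_{0},k_{0})}$, we get $\abs{m_{j_{0}k_{0}}}=\pi_{p}(M\ast S^{(j_{0},k_{0})})\leq\norm{M}_{\La(\Pi_{p}(\ell_{p^{*}},\ell_{p}))}$. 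Taking the supremum over $j_{0},k_{0}$ yields $\sup_{j,k}\abs{m_{jk}}\leq\norm{M}_{\La(\Pi_{p}(\ell_{p^{*}},\ell_{p}))}<\infty$; combined with the previous inequality this gives the equality $\norm{M}_{\La(\Pi_{p}(\ell_{p^{*}},\ell_{p}))}=\sup_{j,k}\abs{m_{jk}}$.

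For the last assertion, by \eqref{classical_tr_tr} the triangular truncation $\Tr_{\Delta}$ is the Schur multiplier associated with the $0$--$1$ matrix $T_{\Delta}'$, whose entries satisfy $\sup_{j,k}\abs{t'_{jk}}=1$; the displayed formula then gives $\Tr_{\Delta}\in\La(\Pi_{p}(\ell_{p^{*}},\ell_{p}))$ with $\norm{\Tr_{\Delta}}_{\La(\Pi_{p}(\ell_{p^{*}},\ell_{p}))}=1$. There is no real obstacle in this proof — all the substance has already been invested in Lemma \ref{lem_p_sum_criteria}; the only point requiring a moment's care is that the supremum of the matrix entries is genuinely realised as a supremum of ideal norms, which is exactly what the matrix units $S^{(j_{0},k_{0})}$ provide.
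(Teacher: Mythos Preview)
Your argument is correct and is exactly the approach the paper has in mind: the corollary is stated without proof, as an immediate consequence of Lemma~\ref{lem_p_sum_criteria}, and you have simply written out the straightforward verification (pointwise estimate for one direction, matrix units for the other). There is nothing to add.
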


Observe that $\Tr_{\Delta}\notin\La(\La(\ell_{p^{*}},\ell_{p}))$ for $p^{*}\geq p$, by Proposition \ref{Pil-Ben} $(ii)$. Nevertheless, $\Tr_{\Delta}$ is bounded on the ideal $\Pi_{p}(\ell_{p^{*}},\ell_{p})\subset \La(\ell_{p^{*}},\ell_{p})$ for all $p\in(1,\infty)$.

For a Lipschitz function $f:\mathbb C\to \C$, write
\begin{align*}
\norm{f}_{\mathrm{Lip}}:=
\sup_{\substack{z_{1},z_{2}\in\mathbb C\\ z_{1}\neq
z_{2}}}\frac{\abs{f(z_1)-f(z_2)}}{\abs{z_1-z_2}}.
\end{align*}
Moreover, let $\ph_{f}:\C^{2}\to \C$ be given by
\begin{align*}
\ph_{f}(\lambda_{1},\lambda_{2}):= \left\{\begin{array}{ll}
\frac{\abs{\lambda_1}-\abs{\lambda_2}}{\lambda_1-\lambda_2}&\textrm{if }\lambda_{1}\neq\lambda_{2}\\
0&\textrm{otherwise}
\end{array}.\right.
\end{align*} 
\vanish{
\begin{align*}
\norm{f}_{\mathrm{Lip}}:=
\sup_{\substack{z_{1},z_{2}\in\mathbb C\\ z_{1}\neq
z_{2}}}\frac{\abs{f(z_1)-f(z_2)}}{\abs{z_1-z_2}}
\end{align*}
if $f$ is Lipschitz. For sequences $\lambda=\{\lambda_{j}\}_{j=1}^{\infty}$ and
$\mu=\{\mu_k\}_{k=1}^{\infty}$ of complex numbers, consider the matrix
\begin{align*}
M_{\ph_{f}}^{\lambda,\mu}:=\{\ph_f(\lambda_{j},\mu_{k})\}_{j,k=1}^{\infty}.
\end{align*}
The following result is an immediate consequence of Lemma \ref{any
bounded seq}.

\begin{proposition}\label{for phi_f} 
Let $p\in(1,\infty)$, $f:\C\to\C$ and $C\geq 0$. Then the following statements are equivalent:
\begin{itemize}
\item For all complex sequences $\lambda$ and $\mu$, $M_{\ph_{f}}^{\lambda,\mu}$ is a Schur multiplier on $\Pi_{p}(\ell_{p^{*}},\ell_{p})$ with $\norm{M_{\ph_{f}}^{\lambda,\mu}}_{\La(\Pi_{p}(\ell_{p^{*}},\ell_{p}))}\leq C$;
\item $f$ is Lipschitz with $\norm{f}_{\mathrm{Lip}}\leq C$.
\end{itemize}
\end{proposition}
}

We now prove our main result concerning commutator estimates on $\Pi_{p}(\ell_{p^{*}},\ell_{p})$. 

\begin{theorem}\label{th_p_summing_main}
Let $p\in(1,\infty)$, $A\in\La_{\ud}(\ell_{p^{*}})$ and $B\in\La_{\ud}(\ell_{p})$. Let $f:\C\to \C$ be Lipschitz. Then
\begin{align}\label{final est_p_sum}
\pi_{p}(f(B)S-Sf(A))\leq \diagA\diagB \norm{f}_{\mathrm{Lip}}\pi_{p}(BS-SA)
\end{align}
for all $S\in \La(\ell_{p^{*}},\ell_{p})$ such that $BS-SA\in\Pi_{p}(\ell_{p^{*}},\ell_{p})$.
\end{theorem}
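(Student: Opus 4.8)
The plan is to derive this from the discrete machinery of Section \ref{unconditional basis spaces}, exploiting that the ideal $\Pi_{p}(\ell_{p^{*}},\ell_{p})$ is unusually well behaved under Schur multiplication. The cleanest route is to invoke Remark \ref{other ideals} with $\Ic=\Pi_{p}(\ell_{p^{*}},\ell_{p})$, $X=\ell_{p^{*}}$, $Y=\ell_{p}$: these carry the standard unconditional bases (for which Assumption \ref{basis assumption} holds), and $(\Pi_{p}(\ell_{p^{*}},\ell_{p}),\pi_{p})$ is a Banach ideal in $\La(\ell_{p^{*}},\ell_{p})$, so it suffices to verify two facts: \textbf{(a)} $\pi_{p}$ is lower semicontinuous along $\pi_{p}$-bounded SOT-convergent sequences in $\Pi_{p}(\ell_{p^{*}},\ell_{p})$, and \textbf{(b)} $C:=\sup_{n\in\N}\|T^{\lambda,\mu}_{\ph_{f},n}\|_{\La(\Pi_{p}(\ell_{p^{*}},\ell_{p}))}\le\norm{f}_{\mathrm{Lip}}$. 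Granting (a) and (b), Remark \ref{other ideals} gives \eqref{final est_p_sum} immediately, with constant $C\le\norm{f}_{\mathrm{Lip}}$. (A Lipschitz function need not be bounded on $\C$, but its restriction to the compact set ${\rm sp}(A)\cup{\rm sp}(B)$ is, so $f(A)$ and $f(B)$ are well defined via \eqref{functional calculus matrices_infinite}; one may replace $f$ by any bounded function agreeing with it on ${\rm sp}(A)\cup{\rm sp}(B)$ without affecting anything, and the value of $\ph_{f}$ on the diagonal is irrelevant because $\Qq_{k}(BS-SA)\Pp_{j}=(\mu_{k}-\lambda_{j})\Qq_{k}S\Pp_{j}$ vanishes when $\mu_{k}=\lambda_{j}$.)

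For \textbf{(b)}, note that $T^{\lambda,\mu}_{\ph_{f},n}$ is Schur multiplication by the matrix with $(k,j)$-entry equal to $\ph_{f}(\lambda_{j},\mu_{k})$ for $1\le j,k\le n$ and $0$ otherwise. Since $f$ is Lipschitz, every entry has modulus at most $\norm{f}_{\mathrm{Lip}}$, so Corollary \ref{any bounded seq} gives $\|T^{\lambda,\mu}_{\ph_{f},n}\|_{\La(\Pi_{p}(\ell_{p^{*}},\ell_{p}))}\le\norm{f}_{\mathrm{Lip}}$ for every $n$. For \textbf{(a)}, use the matrix criterion of Lemma \ref{lem_p_sum_criteria}: if $S_{m}\to S$ in SOT, then testing against $e_{k}$ and reading off the $j$-th coordinate shows that the matrix entries of $S_{m}$ converge entrywise to those of $S$, so Fatou's lemma for series yields $\pi_{p}(S)^{p}=\sum_{j,k}|s_{jk}|^{p}\le\liminf_{m}\sum_{j,k}|s^{(m)}_{jk}|^{p}=\liminf_{m}\pi_{p}(S_{m})^{p}$; in particular $S\in\Pi_{p}(\ell_{p^{*}},\ell_{p})$ whenever $\sup_{m}\pi_{p}(S_{m})<\infty$.

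If one prefers a self-contained argument, one repeats the proof of Proposition \ref{commutator estimates diagonalizable} by hand: with $S_{n}:=\sum_{j,k=1}^{n}V^{-1}\Qq_{k}VSU^{-1}\Pp_{j}U$, the algebraic identity underlying Lemma \ref{write as schur multiplier}, namely $f(B)S_{n}-S_{n}f(A)=V^{-1}T^{\lambda,\mu}_{\ph_{f},n}(V(BS-SA)U^{-1})U$, holds for every $S\in\La(\ell_{p^{*}},\ell_{p})$ (it only uses boundedness of $f(A)$ and $f(B)$, not $S\in\Pi_{p}$); combining (b) with the ideal property of $\pi_{p}$ bounds $\pi_{p}(f(B)S_{n}-S_{n}f(A))$ by $\|U\|\|U^{-1}\|\|V\|\|V^{-1}\|\norm{f}_{\mathrm{Lip}}\pi_{p}(BS-SA)$ uniformly in $n$; since $S_{n}x\to Sx$ for every $x$, one has $f(B)S_{n}-S_{n}f(A)\to f(B)S-Sf(A)$ in SOT, and (a) permits passage to the limit; finally taking the infimum over admissible $U,V$ replaces $\|U\|\|U^{-1}\|\|V\|\|V^{-1}\|$ by $\diagA\diagB$. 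The main obstacle is not conceptual: the decisive input, Corollary \ref{any bounded seq} (every bounded matrix acts as a Schur multiplier on $\Pi_{p}(\ell_{p^{*}},\ell_{p})$), is already in hand, so the rest is bookkeeping; the two genuine points of care are that the hypothesis is $S\in\La(\ell_{p^{*}},\ell_{p})$ with $BS-SA\in\Pi_{p}(\ell_{p^{*}},\ell_{p})$ rather than $S\in\Pi_{p}(\ell_{p^{*}},\ell_{p})$ --- which forces the truncation-and-limit passage, i.e.\ requires (a) --- and the legitimacy of $f(A),f(B)$ and the matrix manipulations for an unbounded Lipschitz $f$.
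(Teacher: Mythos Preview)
Your proposal is correct and follows essentially the same approach as the paper: invoke Remark \ref{other ideals} for $\Ic=\Pi_{p}(\ell_{p^{*}},\ell_{p})$, verify the SOT lower semicontinuity of $\pi_{p}$ (the paper does this directly from the defining inequality \eqref{def_p_sum} rather than via Lemma \ref{lem_p_sum_criteria} and Fatou, but either works), and bound the Schur multiplier norm of $T^{\lambda,\mu}_{\ph_{f},n}$ by $\norm{f}_{\mathrm{Lip}}$ via Corollary \ref{any bounded seq}. Your remarks about $f\notin\B(\C)$ and the irrelevance of the diagonal values of $\ph_{f}$ are apt clarifications that the paper leaves implicit.
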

\begin{proof}
Let $\lambda=\{\lambda_{j}\}_{j=1}^{\infty}$ and $\mu=\{\mu_{k}\}_{k=1}^{\infty}$ be sequences such that $A\in\La_{\ud}(\ell_{p^{*}}, \lambda, U)$ and $B\in\La_{\ud}(\ell_{p}, \mu, V)$ for certain $U\in\La(\ell_{p^{*}})$ and $V\in\La(\ell_{p})$. 

It follows directly from \eqref{def_p_sum} that, if $\{S_{m}\}_{m=1}^{\infty}\subseteq \Pi_{p}(\ell_{p^{*}},\ell_{p})$ is a $\pi_{p}$-bounded sequence which SOT-converges to some $S\in \La(X,Y)$, then $S\in \Pi_{p}(\ell_{p^{*}},\ell_{p})$ with $\pi_{p}(S)\leq \limsup_{m\to\infty}\pi_{p}(S_{m})$. Hence, by Remark \ref{other ideals}, it suffices to prove that $\sup_{n\in\N}\|T_{\ph_{f},n}^{\lambda,\mu}\|_{\La(\Pi_{p}(\ell_{p^{*}},\ell_{p}))}\leq \norm{f}_{\mathrm{Lip}}$, where 
\begin{align*}
T_{\ph_{f},n}^{\lambda,\mu}(S)=\sum_{j,k=1}^{n}\ph_{f}(\lambda_{j},\mu_{k})\Pp_{k}S\Pp_{j}\qquad(S\in \Pi_{p}(\ell_{p^{*}},\ell_{p}))
\end{align*}
for $n\in\N$. Fix $n\in\N$ and note that $T_{\ph_{f},n}^{\lambda,\mu}(S)=M\ast S$ for $S\in\Pi_{p}(\ell_{p^{*}},\ell_{p})$, where $M=\{m_{jk}\}_{j,k=1}^{\infty}$ is the matrix given by $m_{jk}=\ph_{f}(\lambda_{j},\mu_{k})$ for $1\leq j,k\leq n$, and $m_{jk}=0$ otherwise. Then 
\begin{align*}
\sup_{j,k}\,\abs{m_{jk}}\leq \sup_{j,k}\,\abs{\ph_{f}(\lambda_{j},\mu_{k})}\leq \norm{f}_{\mathrm{Lip}}.
\end{align*}
Corollary \ref{any bounded seq} now concludes the proof.
\end{proof}

\vanish{
\begin{problem}
As it was mentioned above,
$\mathcal S_2=\Pi_p(H)$ for every $1\le p< \infty.$
The result in Lemma \ref{any bounded seq} works for 
$\mathcal S_2$ and for $\Pi_{p}(l_{p^*},\ell_{p}),$ $1<p<\infty.$
Therefore, it is reasonable to expect that the ideal 
$\Pi_{q}(l_{p^*},\ell_{p})$  satisfies 
Lemma \ref{any bounded seq}
for $1<p,q<\infty$.
\end{problem}
}

\begin{problem}
Let $p,q,r\in[1,\infty]$ be such that $q\geq 2$ and $\frac{1}{q}-\frac{1}{p}<\frac{1}{2}$ and $\frac{1}{r}=\frac{1}{p}-\frac{1}{q}+\frac{1}{2}$. Then the Schatten class $\mathcal{S}_{r}$ coincides with the Banach ideal $\Pi_{p,q}(\ell_{2})$ of $(p,q)$-summing operators on $\ell_{2}$ %,by \cite{?} 
(for the definition of $(p,q)$-summing operators see \cite{Diestel_absPsum}).
Hence, by \cite{DDPS}, the standard triangular truncation is bounded on $\Pi_{p,q}(\ell_{2})$ for these indices. For which $r_{1},r_{2}\in[1,\infty]$ is the standard triangular truncation bounded on $\Pi_{p,q}(\ell_{r_{1}},\ell_{r_{2}})$? Are there other non-trivial ideals $\Ic$ in $\La(\ell_{p},\ell_{q})$ such that the standard triangular truncation is bounded on
$\Ic$? As shown in Theorem \ref{th_p_summing_main} (and the rest of this paper), answers to these questions are linked to commutator estimates for diagonalizable operators.
\end{problem}

\section{Matrix estimates}\label{matrix estimates}

In this section we apply the theory developed in Sections \ref{double operator integrals and Lipschitz estimates}, \ref{unconditional basis spaces} and \ref{lp-spaces} to finite-dimensional spaces, and in doing so we obtain Lipschitz estimates which are independent of the dimension of the underlying space. The results from Section \ref{p-summing operators}, in particular Theorem \ref{th_p_summing_main}, also yield dimension-independent estimates on finite-dimensional spaces, but we leave the straightforward derivation of these results to the reader.

\subsection{Finite-dimensional spaces}\label{finite-dimensional spaces}

Let $n\in\N$ and let $X$ be an $n$-dimensional Banach space with basis $\left\{e_{1},\ldots,e_{n}\right\}\subset X$. For $1\leq k\leq n$, let $\Pp_{k}\in\La(X)$ be the projection given by $\Pp_{k}(x_{1}e_{1}+\ldots+x_{n}e_{n}):=x_{k}e_{k}$ for $(x_{1},\ldots,x_{n})\in\C^{n}$. Recall that an operator $A\in\La(X)$ is diagonalizable if there exists $U\in\La(X)$ invertible such that
\begin{align}\label{diagonal}
UAU^{-1}=\sum_{k=1}^{n}\lambda_{k}\Pp_{k}
\end{align}
for some $(\lambda_{1},\ldots,\lambda_{n})\in\C^{n}$. In this case we write $A\in\La_{\ud}(X,\{\lambda_{j}\}_{j=1}^{n},U)$. Recall also the definition of spectral and scalar type operators from Section \ref{scalar type operators}. 

\begin{lemma}\label{diagonalizable}
Each $A\in\La(X)$ is a spectral operator. Furthermore, $A$ is a scalar type operator if and only if $A$ is diagonalizable. If $A\in\La_{\ud}(X,\{\lambda_{j}\}_{j=1}^{n},U)$ then the spectral measure $E$ of $A$ is given by $E(U)=0$ if $U\cap{\rm sp}(A)=\emptyset$, and $E(\left\{\lambda\right\})=\sum_{\lambda_{j}=\lambda}U^{-1}\Pp_{j}U$ for $\lambda\in{\rm sp}(A)$.
\end{lemma}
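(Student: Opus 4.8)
The plan is to extract everything from the primary (Jordan) decomposition of $A$ on the finite-dimensional space $X$. Write ${\rm sp}(A)=\{\lambda^{(1)},\dots,\lambda^{(m)}\}$ and let $X_{i}:=\ker(A-\lambda^{(i)}\mathrm{I}_{X})^{n}$ be the generalized eigenspace for $\lambda^{(i)}$, so that $X=\bigoplus_{i=1}^{m}X_{i}$, each $X_{i}$ is $A$-invariant, and ${\rm sp}(A,X_{i})=\{\lambda^{(i)}\}$. Let $P_{i}\in\La(X)$ be the projection onto $X_{i}$ along $\bigoplus_{l\neq i}X_{l}$; equivalently $P_{i}=\frac{1}{2\pi\ui}\oint_{\gamma_{i}}(z-A)^{-1}\,\ud z$ for a small contour $\gamma_{i}$ separating $\lambda^{(i)}$ from the rest of the spectrum. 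These projections commute with $A$ and satisfy $P_{i}P_{l}=\delta_{il}P_{i}$ and $\sum_{i=1}^{m}P_{i}=\mathrm{I}_{X}$. First I would define $E\colon\Ba\to\La(X)$ by $E(\sigma):=\sum_{\lambda^{(i)}\in\sigma}P_{i}$.

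Next I would verify that $E$ is a spectral measure in the sense of Section \ref{scalar type operators}: $E(\emptyset)=0$ and $E(\C)=\mathrm{I}_{X}$ are immediate, the identities $E(\sigma_{1}\cap\sigma_{2})=E(\sigma_{1})E(\sigma_{2})$ and $E(\sigma_{1}\cup\sigma_{2})=E(\sigma_{1})+E(\sigma_{2})-E(\sigma_{1})E(\sigma_{2})$ follow at once from $P_{i}P_{l}=\delta_{il}P_{i}$, and $\sigma$-additivity is trivial since $E$ takes only finitely many values. Since each $P_{i}$ commutes with $A$ and $E(\sigma)X=\bigoplus_{\lambda^{(i)}\in\sigma}X_{i}$ satisfies ${\rm sp}(A,E(\sigma)X)=\{\lambda^{(i)}:\lambda^{(i)}\in\sigma\}\subseteq\overline{\sigma}$, the operator $A$ is a spectral operator with spectral measure $E$. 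By \cite[Corollary XV.3.8]{Dunford-Schwartz1971} this $E$ is the unique spectral measure associated with $A$; in particular $E(\sigma)=0$ when $\sigma\cap{\rm sp}(A)=\emptyset$ and $E(\{\lambda\})=P_{i}$ for the index with $\lambda^{(i)}=\lambda$.

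For the equivalence, recall from Definition \ref{scalar type operator} that $A$ is of scalar type precisely when $A=\int_{{\rm sp}(A)}\lambda\,\ud E(\lambda)=\sum_{i=1}^{m}\lambda^{(i)}P_{i}$. If $A\in\La_{\ud}(X,\{\lambda_{j}\}_{j=1}^{n},U)$, then $U^{-1}\Pp_{j}U$ is the (rank one) projection onto the eigenvector $U^{-1}e_{j}$ of $A$, so each generalized eigenspace $X_{i}$ equals the honest eigenspace $\ker(A-\lambda^{(i)}\mathrm{I}_{X})$ and $P_{i}=\sum_{\lambda_{j}=\lambda^{(i)}}U^{-1}\Pp_{j}U$; hence $\sum_{i}\lambda^{(i)}P_{i}=U^{-1}\big(\sum_{j=1}^{n}\lambda_{j}\Pp_{j}\big)U=A$, so $A$ is scalar type and the stated formula $E(\{\lambda\})=\sum_{\lambda_{j}=\lambda}U^{-1}\Pp_{j}U$ holds. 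Conversely, if $A=\sum_{i}\lambda^{(i)}P_{i}$ then $AP_{i}=\lambda^{(i)}P_{i}$, so $A$ acts as the scalar $\lambda^{(i)}$ on $X_{i}$; thus each $X_{i}$ is an eigenspace, $X$ has a basis $\{f_{1},\dots,f_{n}\}$ of eigenvectors, say $Af_{j}=\lambda_{j}f_{j}$, and defining the invertible $U\in\La(X)$ by $Uf_{j}:=e_{j}$ gives $UAU^{-1}e_{j}=\lambda_{j}e_{j}$, i.e.\ $UAU^{-1}=\sum_{j=1}^{n}\lambda_{j}\Pp_{j}$, so $A$ is diagonalizable.

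The only point requiring any care is the verification that the family $\{P_{i}\}$ really satisfies all the axioms of a spectral measure and the defining conditions of a spectral operator ($AE(\sigma)=E(\sigma)A$ and ${\rm sp}(A,E(\sigma)X)\subseteq\overline{\sigma}$); everything else is routine finite-dimensional linear algebra, with uniqueness of $E$ quoted from \cite{Dunford-Schwartz1971}. I do not expect a genuine obstacle here.
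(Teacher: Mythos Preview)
Your argument is correct. Both you and the paper ultimately rest on the Jordan (primary) decomposition of a finite-dimensional operator, but the routes differ. You construct the spectral measure explicitly from the Riesz projections $P_{i}$ onto the generalized eigenspaces, verify the spectral-measure axioms and the conditions $AE(\sigma)=E(\sigma)A$, ${\rm sp}(A,E(\sigma)X)\subseteq\overline{\sigma}$ by hand, and then argue the scalar-type equivalence directly by observing that $A=\sum_{i}\lambda^{(i)}P_{i}$ forces each generalized eigenspace to be an actual eigenspace. The paper instead invokes the abstract structure theorem \cite[Theorem~XV.4.5]{Dunford-Schwartz1971} (an operator is spectral if and only if it decomposes as a commuting sum of a scalar-type operator and a quasinilpotent, with the decomposition unique), reads off both ``every $A$ is spectral'' and ``scalar type $\Leftrightarrow$ diagonalizable'' from the Jordan form together with this uniqueness, and simply refers back to Section~\ref{unconditional basis spaces} for the spectral-measure formula in the diagonalizable case. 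Your approach is more self-contained and elementary, needing only the uniqueness of the spectral measure from \cite[Corollary~XV.3.8]{Dunford-Schwartz1971}; the paper's is terser but leans on the heavier Dunford--Schwartz machinery.
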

\begin{proof}
It was already remarked in Section \ref{unconditional basis spaces} that any diagonalizable operator is a scalar type operator, with spectral measure as specified. By~\cite[Theorem XV.4.5]{Dunford-Schwartz1971}, an operator $T\in\La(Y)$ on an arbitrary Banach space $Y$ is a spectral operator if and only if $T=S+N$ for a scalar type operator $S\in\La(Y)$ and a generalized nilpotent operator $N\in\La(Y)$ such that $NS=SN$, and this decomposition is unique. Combining this with the Jordan decomposition for matrices yields that each $A\in\La(X)$ is a spectral operator. If $A$ is a scalar type operator, then the Jordan decomposition yields a commuting diagonalizable $S$ and a nilpotent $N$ such that $A=S+N$. By the uniqueness of such a decomposition~\cite[Theorem XV.4.5]{Dunford-Schwartz1971}, $N=0$ and $A=S$ is diagonalizable.
\end{proof}

Let $A\in\La_{\ud}(X,\{\lambda_{j}\}_{j=1}^{n},U)$. As in \eqref{functional calculus matrices_infinite},
\begin{align*}
f(A)=U^{-1}\Big(\sum_{k=1}^{n}f(\lambda_{k})\Pp_{k}\Big)U.
\end{align*}
Let $Y$ be a finite-dimensional Banach space. A norm $\norm{\cdot}$ on $\La(X,Y)$ is \emph{symmetric} if
\begin{itemize}
\item $\norm{RST}\leq \norm{R}_{\La(Y)}\norm{S}\norm{T}_{\La(X)}$ for all $R\in \La(Y)$, $S\in\La(X,Y)$ and $T\in \La(X)$;
\item $\norm{x^{*}\!\otimes y}=\norm{x^{*}}_{X^{*}}\norm{y}_{Y}$ for all $x^{*}\in X^{*}$ and $y\in Y$.
\end{itemize}
Clearly $(\La(X,Y),\norm{\cdot})$ is a Banach ideal in the sense of Section \ref{spaces of operators} if and only if $\norm{\cdot}$ is symmetric.

The following result extends inequalities which were known for self-adjoint operators on finite-dimensional Hilbert spaces and unitarily invariant norms~(see e.g.~\cite{Kosaki1992} and \cite[Chapter X]{Bhatia97}), to diagonalizable operators on finite-dimensional Banach spaces and symmetric norms. Note that, for general finite-dimensional Banach spaces $X$ and $Y$, a symmetric norm on $\La(X,Y)$ is not unitarily invariant. Let $\A:=\A(\C\times\C)$ be as in Section \ref{algebras of functions}, and for $f\in\B(\C)$ let $\ph_{f}(\lambda_{1},\lambda_{2}):=\frac{f(\lambda_{2})-f(\lambda_{1})}{\lambda_{2}-\lambda_{1}}$ for $(\lambda_{1},\lambda_{2})\in\C^{2}$ with $\lambda_{1}\neq\lambda_{2}$, as in \eqref{divided difference}. Recall the definition of the spectral constants $\funA$ and $\funB$ from Section \ref{scalar type operators}. The following is a direct corollary of Theorem \ref{general perturbation inequality}, since $\La(X,Y)$ has the strong convex compactness property.

\begin{theorem}\label{matrix inequality}
Let $f\in\B(\C)$ be such that $\ph_{f}$ extends to an element of $\A$. Let $X$ and $Y$ be finite-dimensional Banach spaces, $\norm{\cdot}$ a symmetric norm on $\La(X,Y)$, and let $A\in\La_{\ud}(X)$ and $B\in\La_{\ud}(Y)$. Then
\begin{align*}
\norm{f(B)S-Sf(A)}\leq 16\,\funA\funB\big\|\ph_{f}\big\|_{\A}\norm{BS-SA}
\end{align*}
for all $S\in\La(X,Y)$. In particular, if $X=Y$,
\begin{align}\label{matrix perturbation}
\norm{f(B)-f(A)}\leq 16\,\funA\funB\big\|\ph_{f}\big\|_{\A}\norm{B-A}.
\end{align}
\end{theorem}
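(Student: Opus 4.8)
The plan is to obtain this as a direct specialization of Theorem \ref{general perturbation inequality}, taking the Banach ideal $\Ic$ to be $(\La(X,Y),\norm{\cdot})$. First I would check that this choice satisfies all the hypotheses of that theorem. Since $X$ and $Y$ are finite-dimensional they are separable and have the bounded approximation property (the identity is a finite rank operator). Because $\norm{\cdot}$ is symmetric, $(\La(X,Y),\norm{\cdot})$ is a Banach ideal in $\La(X,Y)$ in the sense of Section \ref{spaces of operators}, as noted just before the statement; it is complete and continuously (in fact isomorphically) embedded in $\La(X,Y)$ since all norms on the finite-dimensional space $\La(X,Y)$ are equivalent. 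Finally, $\La(X,Y)$ is finite-dimensional, hence separable as a Banach space, so by the Pettis Measurability Theorem (see Section \ref{spaces of operators}) it has the strong convex compactness property.

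Next I would verify the conditions on $A$, $B$ and $f$. By Lemma \ref{diagonalizable}, the diagonalizable operators $A\in\La_{\ud}(X)$ and $B\in\La_{\ud}(Y)$ are scalar type operators, so $A\in\La_{s}(X)$ and $B\in\La_{s}(Y)$. Restricting $f\in\B(\C)$ to ${\rm sp}(A)\cup{\rm sp}(B)$ yields an element of $\B({\rm sp}(A)\cup{\rm sp}(B))$, and the corresponding divided difference on ${\rm sp}(A)\times{\rm sp}(B)$ is the restriction of the given extension of $\ph_{f}$ to $\A=\A(\C\times\C)$. Since any representation of the form \eqref{function representation} on $\C\times\C$ restricts to such a representation on ${\rm sp}(A)\times{\rm sp}(B)$ without increasing the value of the integral, $\ph_{f}$ extends to an element of $\A({\rm sp}(A)\times{\rm sp}(B))$ with $\norm{\ph_{f}}_{\A({\rm sp}(A)\times{\rm sp}(B))}\leq\norm{\ph_{f}}_{\A}$.

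With the hypotheses in place, Theorem \ref{general perturbation inequality} gives
\begin{align*}
\norm{f(B)S-Sf(A)}\leq 16\,\funA\funB\norm{\ph_{f}}_{\A({\rm sp}(A)\times{\rm sp}(B))}\norm{BS-SA}\leq 16\,\funA\funB\norm{\ph_{f}}_{\A}\norm{BS-SA}
\end{align*}
for every $S\in\La(X,Y)$, the side condition $BS-SA\in\Ic$ being vacuous here. Specializing to $X=Y$ and $S=\mathrm{I}_{X}$ then produces \eqref{matrix perturbation}. I do not expect a serious obstacle: the only points needing attention are confirming that a symmetric norm makes $\La(X,Y)$ a Banach ideal enjoying the strong convex compactness property — both immediate in finite dimensions — and the elementary observation that shrinking the ambient spectral sets does not increase the relevant $\A$-norm.
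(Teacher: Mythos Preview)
Your proposal is correct and follows exactly the same approach as the paper, which simply notes that the result is a direct corollary of Theorem \ref{general perturbation inequality} since $\La(X,Y)$ has the strong convex compactness property. You have merely spelled out in more detail the routine verifications (separability, bounded approximation property, that a symmetric norm yields a Banach ideal, and the monotonicity of the $\A$-norm under restriction of the spectral sets) that the paper leaves implicit.
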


\begin{corollary}\label{besov matrices}
There exists a universal constant $C\geq 0$ such that the following holds. Let $X$ and $Y$ be finite-dimensional Banach spaces and $\norm{\cdot}$ a symmetric norm on $\La(X,Y)$. Let $f\in\Br^{1}_{\infty,1}(\R)$, and let $A\in\La_{\ud}(X)$ and $B\in\La_{\ud}(Y)$ be such that ${\rm sp}(A)\cup{\rm sp}(B)\subseteq \R$. Then
\begin{align*}
\norm{f(B)S-Sf(A)}\leq C\,\funA\funB\norm{f}_{\Br^{1}_{\infty,1}(\R)}\!\norm{BS-SA}
\end{align*}
for all $S\in \La(X,Y)$. In particular, if $X=Y$,
\begin{align*}
\norm{f(B)-f(A)}\leq C\,\funA\funB\norm{f}_{\Br^{1}_{\infty,1}(\R)}\!\norm{B-A}.
\end{align*}
\end{corollary}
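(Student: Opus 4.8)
The plan is to deduce Corollary~\ref{besov matrices} from Theorem~\ref{matrix inequality}, equivalently from Theorem~\ref{general perturbation inequality} applied to the Banach ideal $\Ic=\La(X,Y)$, combined with Lemma~\ref{besov condition}, in exact parallel with the passage from Theorem~\ref{general perturbation inequality} to Corollary~\ref{besov perturbation inequality}.

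First I would check that the set-up of Theorem~\ref{matrix inequality} applies. The spaces $X$ and $Y$, being finite-dimensional, are separable and have the bounded approximation property; $(\La(X,Y),\norm{\cdot})$ is a Banach ideal precisely because $\norm{\cdot}$ is symmetric, and being finite-dimensional it is separable, hence has the strong convex compactness property. By Lemma~\ref{diagonalizable} the operators $A\in\La_{\ud}(X)$ and $B\in\La_{\ud}(Y)$ are scalar type operators, so the spectral constants $\funA$ and $\funB$ from Section~\ref{scalar type operators} are defined. Since $f\in\Br^{1}_{\infty,1}(\R)$ is bounded on the compact set ${\rm sp}(A)\cup{\rm sp}(B)\subseteq\R$, the operators $f(A)$ and $f(B)$ are well defined via the functional calculus of Section~\ref{scalar type operators}, and the divided difference $\ph_{f}$ agrees on ${\rm sp}(A)\times{\rm sp}(B)$ with the function $\psi_{f}$ of Lemma~\ref{besov condition}.

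Next I would estimate $\ph_{f}$ in the relevant algebra. Lemma~\ref{besov condition} supplies a constant $C_{0}\geq0$, independent of $f$, with $\psi_{f}\in\A(\R^{2})$ and $\norm{\psi_{f}}_{\A(\R^{2})}\leq C_{0}\norm{f}_{\Br^{1}_{\infty,1}(\R)}$. Restricting a representation \eqref{function representation} of $\psi_{f}$ on $\R^{2}$ to ${\rm sp}(A)\times{\rm sp}(B)$ shows that $\ph_{f}$ extends to an element of $\A({\rm sp}(A)\times{\rm sp}(B))$ with $\norm{\ph_{f}}_{\A({\rm sp}(A)\times{\rm sp}(B))}\leq\norm{\psi_{f}}_{\A(\R^{2})}$ (this is the cleanest route, using Theorem~\ref{general perturbation inequality} directly); if one prefers to quote Theorem~\ref{matrix inequality} verbatim, extending the kernels $a_{1},a_{2}$ by zero off $\R$ gives a representation on $\C^{2}$ and the same bound in $\A=\A(\C\times\C)$, since $\norm{a_{i}(\cdot,\w)}_{\B(\C)}=\norm{a_{i}(\cdot,\w)}_{\B(\R)}$ after this extension. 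The theorem then yields, for every $S\in\La(X,Y)$,
\begin{align*}
\norm{f(B)S-Sf(A)}\leq 16\,\funA\funB\norm{\ph_{f}}_{\A}\norm{BS-SA}\leq 16\,C_{0}\,\funA\funB\norm{f}_{\Br^{1}_{\infty,1}(\R)}\norm{BS-SA},
\end{align*}
and the case $X=Y$ follows at once. The universal constant asserted in the statement is $C:=16C_{0}$.

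I do not expect a genuine obstacle here; the statement is a corollary by design. The only points needing a line of justification are routine: the stability of the $\A$-norm under restriction of the underlying sets and under zero-extension of the kernels, and the observation that a Besov-class function enters the estimate only through its (bounded Borel) restriction to the compact spectrum, so that its possible unboundedness on all of $\R$ plays no role.
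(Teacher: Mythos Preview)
Your argument is correct and is exactly the intended one: the paper states Corollary~\ref{besov matrices} without proof, as an immediate consequence of Theorem~\ref{matrix inequality} (itself a specialization of Theorem~\ref{general perturbation inequality}) together with Lemma~\ref{besov condition}, just as Corollary~\ref{besov perturbation inequality} follows in the infinite-dimensional case. Your verification of the hypotheses and the handling of the restriction/extension of $\psi_{f}$ are routine and accurate.
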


\begin{remark}\label{dimension-independent}
Let $\sigma_1,\sigma_2\subset \C$ be finite sets. Then any $\ph:\sigma_1\times \sigma_2\rightarrow \C$ belongs to $\A(\sigma_1\times \sigma_2)$. Indeed, one can find a representation as in \eqref{function representation} by letting $\Omega$ be finite and solving a system of linear equations. Therefore Theorem \ref{general perturbation inequality} yields an estimate
\begin{align*}
\norm{f(B)S-Sf(A)}\leq 16\,\funA\funB\big\|\ph_{f}\big\|_{\A({\rm sp}(A)\times{\rm sp}(B))}\norm{BS-SA}
\end{align*}
as in \eqref{main inequality} for all $f\in\B(\C)$. This might lead one to think that the restriction in Theorem \ref{matrix inequality} that $\ph_{f}$ extends to an element of $\A$ is not really necessary. However, for general $f\in\B(\C)$ the norm $\big\|\ph_{f}\big\|_{\A({\rm sp}(A)\times{\rm sp}(B))}$ may blow up as the number of points in ${\rm sp}(A)$ and ${\rm sp}(B)$ grows to infinity. Indeed, as remarked before, letting $f\in\B(\C)$ be the absolute value function and considering the operator norm, a dimension-independent estimate as in \eqref{matrix perturbation} does not hold for all self-adjoint operators on all finite-dimensional Hilbert spaces \cite[(X.25)]{Bhatia97}. Hence $\ph_{f}$ does not extend to an element of $\A$, and one cannot expect to obtain Theorem \ref{matrix inequality} or Corollary \ref{besov matrices} for all bounded Borel functions on $\C$.
\end{remark}

\subsection{The absolute value function}

We now apply the results for the absolute value function from previous sections to finite-dimensional spaces. Throughout this section, let $f(t):=\abs{t}$ for $t\in\R$.

First note that Lemma \ref{write as schur multiplier} and Proposition \ref{truncation estimate_inf} relate commutator estimates to estimates for triangular truncation operators for general symmetric norms on matrix spaces. 

For $n\in\N$ and $p\in[1,\infty)$ let $\ell_{p}^{n}$ denote $\C^{n}$ with the $p$-norm
\begin{align*}
\norm{(x_{1},\ldots,x_{n})}_{p}:=\Big(\sum_{j=1}^{n}\abs{x_{j}}^{p}\Big)^{1/p}\qquad\big((x_{1},\ldots, x_{n})\in\C^{n}\big),
\end{align*}
and let $\ell_{\infty}^{n}$ be $\C^{n}$ with the norm 
\begin{align*}
\norm{(x_{1},\ldots,x_{n})}_{\infty}:=\max_{1\leq j\leq n}\,\abs{x_{j}}\qquad\big((x_{1},\ldots, x_{n})\in\C^{n}\big).
\end{align*}
Theorem \ref{truncation estimate_inf_lplq} immediately yields the following result. It shows that, although the Lipschitz estimate
\begin{align*}
\norm{f(B)-f(A)}_{\La(\ell_{p}^{n},\ell_{q}^{n})}\leq \const \norm{B-A}_{\La(\ell_{p}^{n},\ell_{q}^{n})}
\end{align*}
does not hold with a constant independent of the dimension $n$ for $f$ the absolute value function, $p=q=2$ and all self-adjoint operators on $\ell_{2}^{n}$, one can nevertheless obtain such estimates for $p<q$, $p=q=1$ or $p=q=\infty$ by considering diagonalizable operators $A$ and $B$ for which $\diagA, \diagB\leq M$, for some fixed $M\geq 1$. For $A$ a diagonalizable operator, recall the definition of $\diagA$ from \eqref{definition diagonalizable constant}.

\begin{theorem}\label{p<q matrices}
Let $p,q\in[1,\infty]$ with $p<q$, $p=q=1$ or $p=q=\infty$. Then there exists a constant $C\geq 0$ such that the following holds. Let $n\in\N$ and let $A\in\La_{\ud}(\ell_{p}^{n})$ and $B\in\La_{\ud}(\ell_{q}^{n})$ have real spectrum. Then 
\begin{align*}
\norm{f(B)S-Sf(A)}_{\La(\ell_{p}^{n},\ell_{q}^{n})}\leq C\diagA\diagB\norm{BS-SA}_{\La(\ell_{p}^{n},\ell_{q}^{n})}
\end{align*}
for all $S\in\La(\ell_{p}^{n},\ell_{q}^{n})$. In particular,
\begin{align*}
\norm{f(B)-f(A)}_{\La(\ell_{p}^{n},\ell_{q}^{n})}\leq C\diagA\diagB\norm{B-A}_{\La(\ell_{p}^{n},\ell_{q}^{n})}.
\end{align*}
\end{theorem}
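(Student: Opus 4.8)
The plan is to run the proof of Theorem \ref{truncation estimate_inf_lplq} in the finite-dimensional setting and to keep track of the fact that the constants produced do not depend on $n$. As noted at the start of Section \ref{unconditional basis spaces}, Lemma \ref{write as schur multiplier}, Proposition \ref{truncation estimate_inf}, Lemma \ref{crucial lemma} and Proposition \ref{commutator estimates diagonalizable} remain valid with $X=\ell_{p}^{n}$ and $Y=\ell_{q}^{n}$ equipped with their standard bases, which satisfy Assumption \ref{basis assumption}; the ideal $\Ic=\La(\ell_{p}^{n},\ell_{q}^{n})$ with the operator norm is a Banach ideal and, being finite-dimensional, trivially has the strong convex compactness property. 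Fix $A\in\La_{\ud}(\ell_{p}^{n},\lambda,U)$ and $B\in\La_{\ud}(\ell_{q}^{n},\mu,V)$ with $\lambda,\mu$ real, and let $S\in\La(\ell_{p}^{n},\ell_{q}^{n})$. Since $\sum_{j=1}^{n}\Pp_{j}$ is the identity on $\ell_{p}^{n}$ and likewise on $\ell_{q}^{n}$, the operator $S_{n}$ from Lemma \ref{write as schur multiplier} equals $S$, so that lemma gives, with no limiting argument,
\[
\norm{f(B)S-Sf(A)}_{\La(\ell_{p}^{n},\ell_{q}^{n})}\leq \|U\|\,\|V^{-1}\|\,\norm{T^{\lambda,\mu}_{\ph_{f},n}\big(V(BS-SA)U^{-1}\big)}_{\La(\ell_{p}^{n},\ell_{q}^{n})}.
\]

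The next step is to bound $\|T^{\lambda,\mu}_{\ph_{f},n}\|_{\La(\La(\ell_{p}^{n},\ell_{q}^{n}))}$ by a constant independent of $n$. By Proposition \ref{truncation estimate_inf} there is a universal constant $C'$ with $\norm{T^{\lambda,\mu}_{\ph_{f},n}(R)}\leq C'(\|R\|+\|T^{\lambda,\mu}_{\Delta,n}(R)\|)$ for all $R\in\La(\ell_{p}^{n},\ell_{q}^{n})$, and by Lemma \ref{crucial lemma} (applied with $r=p$ and $s=q$) one has $\|T^{\lambda,\mu}_{\Delta,n}\|_{\La(\La(\ell_{p}^{n},\ell_{q}^{n}))}\leq \|\Tr_{\Delta,n}\|_{\La(\La(\ell_{p}^{n},\ell_{q}^{n}))}$. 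Under the natural isometric embedding $\La(\ell_{p}^{n},\ell_{q}^{n})\hookrightarrow\La(\ell_{p},\ell_{q})$ (which uses that $\sum_{j=1}^{n}\Pp_{j}$ has norm one, by Assumption \ref{basis assumption}) the embedded copy of $\La(\ell_{p}^{n},\ell_{q}^{n})$ is invariant under the standard triangular truncation $\Tr_{\Delta}$ on $\La(\ell_{p},\ell_{q})$, and $\Tr_{\Delta}$ restricts there to $\Tr_{\Delta,n}$, whence $\|\Tr_{\Delta,n}\|_{\La(\La(\ell_{p}^{n},\ell_{q}^{n}))}\leq \|\Tr_{\Delta}\|_{(p,q)}$; this is finite by Proposition \ref{Pil-Ben}~$(i)$ precisely in the cases $p<q$, $p=q=1$ and $p=q=\infty$. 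Combining these facts, there is a constant $C$ depending only on $p$ and $q$ such that $\|T^{\lambda,\mu}_{\ph_{f},n}(R)\|\leq C\|R\|$ for all $n$ and all $R$.

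Feeding this back into the first inequality and using the ideal property of $\La(\ell_{p}^{n},\ell_{q}^{n})$ to write $\|V(BS-SA)U^{-1}\|\leq \|V\|\,\|U^{-1}\|\,\|BS-SA\|$, one obtains
\[
\norm{f(B)S-Sf(A)}_{\La(\ell_{p}^{n},\ell_{q}^{n})}\leq C\,\big(\|U\|\,\|U^{-1}\|\big)\big(\|V\|\,\|V^{-1}\|\big)\,\norm{BS-SA}_{\La(\ell_{p}^{n},\ell_{q}^{n})},
\]
and taking the infimum over all pairs $U$, $V$ realising $A$ and $B$ as diagonalizable operators, recalling the definition of $\diagA$ and $\diagB$ in \eqref{definition diagonalizable constant}, gives the asserted estimate with constant $C\diagA\diagB$ independent of $n$; the Lipschitz estimate is the special case where $S$ is the identity operator on $\ell_{p}^{n}$. (Alternatively, the second step shows that hypothesis \eqref{boundedness condition} of Proposition \ref{commutator estimates diagonalizable} holds here with a bound independent of $n$, and one may quote that proposition directly.) The only point that is genuinely new compared with Theorem \ref{truncation estimate_inf_lplq}, and the step I expect to require the most care, is precisely the uniform bound $\sup_{n}\|\Tr_{\Delta,n}\|_{\La(\La(\ell_{p}^{n},\ell_{q}^{n}))}<\infty$, i.e.\ the dimension-independence of the triangular truncation norm; given that, the remainder is a formal repetition of the infinite-dimensional argument.
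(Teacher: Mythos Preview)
Your proposal is correct and follows the same route as the paper: the paper simply states that Theorem \ref{truncation estimate_inf_lplq} ``immediately yields'' the result (relying on the remark at the start of Section \ref{unconditional basis spaces} that the proofs carry over to finite-dimensional spaces), and you have spelled out exactly how that goes, including the one genuinely nontrivial point---the dimension-independent bound $\|\Tr_{\Delta,n}\|_{\La(\La(\ell_{p}^{n},\ell_{q}^{n}))}\leq\|\Tr_{\Delta}\|_{(p,q)}$ via the isometric embedding.
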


Of course Corollaries \ref{to l2} and \ref{from l2} imply that for $p=2$ or $q=2$ and $A$ or $B$ self-adjoint, the estimates in Theorem \ref{p<q matrices} simplify.

For $p\geq q$, Propositions \ref{case p geq q1} and \ref{case p geq q2} yield dimension-independent estimates. We state the estimates which follow from Proposition \ref{case p geq q1}, the analogous result that follows from Proposition \ref{case p geq q2} should be obvious. 

\begin{proposition}\label{pgeqq matrices}
Let $p,q\in(1,\infty]$ with $p\geq q$. For each $s<q$ there exists a constant $C\geq 0$ such that the following holds. Let $n\in\N$, and let $A\in\La_{\ud}(\ell_{p}^{n},\lambda,U)$ and $B\in\La_{\ud}(\ell_{q}^{n},\mu,V)$ have real spectrum. Then
\begin{align*}
\|f(B)S-Sf(A)\|_{\La(\ell_{p}^{n},\ell_{q}^{n})}\leq C \|U\|_{\La(\ell_{p}^{n})}\|V^{-1}\|_{\La(\ell_{q}^{n})}\|V(BS-SA)U\|_{\La(\ell_{p}^{n},\ell_{s}^{n})}
\end{align*}
for all $S\in\La(\ell_{p}^{n},\ell_{q}^{n})$. In particular,
\begin{align*}
\|f(B)-f(A)\|_{\La(\ell_{p}^{n},\ell_{q}^{n})}\leq C \|U\|_{\La(\ell_{p}^{n})}\|V^{-1}\|_{\La(\ell_{q}^{n})}\|V(B-A)U\|_{\La(\ell_{p}^{n},\ell_{s}^{n})}.
\end{align*}
\end{proposition}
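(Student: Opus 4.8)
The plan is to reproduce the proof of Proposition~\ref{case p geq q1} word for word in the finite-dimensional setting. This is legitimate because, as noted at the start of Section~\ref{unconditional basis spaces}, all the results of that section carry over verbatim to finite-dimensional Banach spaces equipped with a basis; in particular Lemma~\ref{write as schur multiplier} and Proposition~\ref{truncation estimate_inf} apply, with $\La(\ell_{p}^{n},\ell_{q}^{n})$ (for the operator norm, which is symmetric) playing the role of the Banach ideal $\Ic$ and trivially having the strong convex compactness property. The one point that is not purely formal is that the final constant $C$ must not depend on the dimension $n$.

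First I would fix $n$ and $A\in\La_{\ud}(\ell_{p}^{n},\lambda,U)$, $B\in\La_{\ud}(\ell_{q}^{n},\mu,V)$, $S\in\La(\ell_{p}^{n},\ell_{q}^{n})$ as in the statement, and apply the finite-dimensional form of Lemma~\ref{write as schur multiplier} \emph{with the truncation level taken equal to $n$} and with $\Ic=\La(\ell_{p}^{n},\ell_{q}^{n})$. Since $\ell_{p}^{n}$ and $\ell_{q}^{n}$ are $n$-dimensional, $\sum_{j=1}^{n}\Pp_{j}=\I_{\ell_{p}^{n}}$ and $\sum_{k=1}^{n}\Qq_{k}=\I_{\ell_{q}^{n}}$, so the operator $S_{n}$ of that lemma is just $S$ itself, and one obtains
\[
\norm{f(B)S-Sf(A)}_{\La(\ell_{p}^{n},\ell_{q}^{n})}\le\norm{U}_{\La(\ell_{p}^{n})}\norm{V^{-1}}_{\La(\ell_{q}^{n})}\,\norm{T^{\lambda,\mu}_{\ph_{f},n}(R)}_{\La(\ell_{p}^{n},\ell_{q}^{n})},
\]
where $R:=V(BS-SA)U^{-1}$ and $\ph_{f}$ is the function from~\eqref{psi_f_2}. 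Next, Proposition~\ref{truncation estimate_inf} (in finite dimensions, with $\Ic=\La(\ell_{p}^{n},\ell_{q}^{n})$) yields a universal constant $C_{1}$ bounding $\norm{T^{\lambda,\mu}_{\ph_{f},n}(R)}_{\La(\ell_{p}^{n},\ell_{q}^{n})}$ by $C_{1}\big(\norm{R}_{\La(\ell_{p}^{n},\ell_{q}^{n})}+\norm{T^{\lambda,\mu}_{\Delta,n}(R)}_{\La(\ell_{p}^{n},\ell_{q}^{n})}\big)$, and --- exactly as in the proof of Proposition~\ref{case p geq q1} --- one then combines Lemma~\ref{crucial lemma} (with $p=r$ and with $q$ and $s$ interchanged) with Proposition~\ref{Pil-Ben}$(iii)$ (likewise with $q$ and $s$ interchanged, which is permissible since $s<q\le p$) to bound $\norm{T^{\lambda,\mu}_{\Delta,n}(R)}_{\La(\ell_{p}^{n},\ell_{q}^{n})}$ by $C_{2}(p,q,s)\,\norm{R}_{\La(\ell_{p}^{n},\ell_{s}^{n})}$. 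Since $\La(\ell_{p}^{n},\ell_{s}^{n})\hookrightarrow\La(\ell_{p}^{n},\ell_{q}^{n})$ contractively (as $s<q$), chaining these estimates gives the assertion with $C:=C_{1}(1+C_{2})$, and the stated $X=Y$ case is the specialization $S=\I_{\ell_{p}^{n}}$.

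The only genuinely delicate point is the dimension-independence of $C$, that is, of $C_{2}$. To secure it I would identify $\ell_{r}^{n}$ isometrically with the span of the first $n$ standard basis vectors of $\ell_{r}$, for $r\in\{p,q,s\}$; under this identification a matrix supported on its first $n$ rows and columns has the same operator norm viewed between the truncated spaces as between the full sequence spaces, so the norm of $T^{\lambda,\mu}_{\Delta,n}\colon\La(\ell_{p}^{n},\ell_{s}^{n})\to\La(\ell_{p}^{n},\ell_{q}^{n})$ is dominated by $\norm{\Tr_{\Delta}}_{\La(\La(\ell_{p},\ell_{s}),\La(\ell_{p},\ell_{q}))}$, which by Proposition~\ref{Pil-Ben}$(iii)$ (with $q$ and $s$ interchanged) is finite and has nothing to do with $n$. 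Everything else is a transcription of the infinite-dimensional argument, so I expect this bookkeeping to be the main --- and only modest --- obstacle. An alternative, essentially equivalent route would be to embed $\ell_{p}^{n}\hookrightarrow\ell_{p}$ and $\ell_{q}^{n}\hookrightarrow\ell_{q}$ isometrically, extend $A,B$ by $0$ and $U,V$ by the identity, and simply invoke Proposition~\ref{case p geq q1}; I prefer the finite-dimensional reformulation above because it avoids having to compare $\norm{U}_{\La(\ell_{p}^{n})}$ with the norm of the extended operator.
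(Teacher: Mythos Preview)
Your proposal is correct and takes essentially the same approach as the paper: the paper does not give an independent argument but simply remarks that Proposition~\ref{case p geq q1} yields the dimension-independent estimate, and your two routes (rerunning the proof of Proposition~\ref{case p geq q1} in finite dimensions, or embedding $\ell_{r}^{n}\hookrightarrow\ell_{r}$ and invoking Proposition~\ref{case p geq q1} directly) are precisely the ways to cash this out. Your care with the dimension-independence of $C_{2}$ via the isometric embedding is the right justification, and note that the $U$ in the displayed statement should read $U^{-1}$ (as you correctly use), consistent with Lemma~\ref{write as schur multiplier} and Proposition~\ref{case p geq q1}.
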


In the case $p=q=2$, Corollary \ref{case p=q=2} implies the following. Again we only consider $\epsilon\in(0,1]$, for simplicity, but the result extends in an obvious manner to other $\epsilon>0$. We write $f(A)=\abs{A}=\sqrt{A^{*}A}$ for a normal operator $A$ on $\ell_{2}^{n}$.

\begin{corollary}\label{case p=q=2 matrices}
For each $\epsilon\in(0,1]$ there exists a constant $C\geq 0$ such that the following holds. Let $n\in\N$ and let $A\in\La_{\ud}(\ell_{2}^{n},\lambda,U)$ and $B\in\La_{\ud}(\ell_{2}^{n},\mu,V)$ be self-adjoint operators, with $U$ and $V$ unitaries. Then
\begin{align}\label{l2 estimate matrices}
\|\abs{B}-\abs{A}\|_{\La(\ell_{2}^{n})}\leq C\min(\|V(B-A)U^{-1}\|_{\La(\ell_{2}^{n},\ell_{2-\epsilon}^{n})},\|V(B-A)U^{-1}\|_{\La(\ell_{2+\epsilon}^{n},\ell_{2}^{n})}).
\end{align}
\end{corollary}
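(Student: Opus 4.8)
The plan is to deduce Corollary \ref{case p=q=2 matrices} from its infinite-dimensional counterpart, Corollary \ref{case p=q=2}, by viewing the given finite-dimensional operators as operators on $\ell_{2}$ supported on the first $n$ coordinates. The essential point, which is exactly the one the authors stress in the introduction, is that the constant in Corollary \ref{case p=q=2} depends only on $\epsilon$ (and not on $A$, $B$, $U$, $V$ or the Hilbert space); hence the matrix estimate obtained this way will automatically be independent of $n$.

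First I would set up the embedding. Fix $n\in\N$ and $A,B,U,V$ as in the statement, and identify $\ell_{2}^{n}$ with the subspace $\{x\in\ell_{2}\mid x_{k}=0\text{ for }k>n\}$ of $\ell_{2}$, with orthogonal complement $\ell_{2}\ominus\ell_{2}^{n}$. Put $\widetilde{A}:=A\oplus 0$, $\widetilde{B}:=B\oplus 0$, $\widetilde{U}:=U\oplus\I$ and $\widetilde{V}:=V\oplus\I$, all acting on $\ell_{2}$. Then $\widetilde{U},\widetilde{V}$ are unitaries, $\widetilde{A},\widetilde{B}$ are (finite-rank) self-adjoint operators with real spectrum, and $\widetilde{U}\widetilde{A}\widetilde{U}^{-1}=\sum_{k=1}^{\infty}\widetilde{\lambda}_{k}\Pp_{k}$, where $\widetilde{\lambda}_{k}=\lambda_{k}$ for $k\leq n$ and $\widetilde{\lambda}_{k}=0$ for $k>n$; since $\{\widetilde{\lambda}_{k}\}_{k}$ is bounded, $\widetilde{A}\in\La_{\ud}(\ell_{2},\widetilde{\lambda},\widetilde{U})$, and likewise $\widetilde{B}\in\La_{\ud}(\ell_{2},\widetilde{\mu},\widetilde{V})$.

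Next I would check that all the relevant norms are preserved by this embedding. Since $\widetilde{A}=A\oplus 0$ we get $\abs{\widetilde{A}}=\sqrt{\widetilde{A}^{*}\widetilde{A}}=\abs{A}\oplus 0$, and similarly $\abs{\widetilde{B}}=\abs{B}\oplus 0$, so $\abs{\widetilde{B}}-\abs{\widetilde{A}}=(\abs{B}-\abs{A})\oplus 0$ and therefore $\norm{\abs{\widetilde{B}}-\abs{\widetilde{A}}}_{\La(\ell_{2})}=\norm{\abs{B}-\abs{A}}_{\La(\ell_{2}^{n})}$. Moreover $\widetilde{V}(\widetilde{B}-\widetilde{A})\widetilde{U}^{-1}=R\oplus 0$ with $R:=V(B-A)U^{-1}$, and for any $R\in\La(\ell_{2}^{n})$ one has $\norm{R\oplus 0}_{\La(\ell_{2},\ell_{2-\epsilon})}=\norm{R}_{\La(\ell_{2}^{n},\ell_{2-\epsilon}^{n})}$ and $\norm{R\oplus 0}_{\La(\ell_{2+\epsilon},\ell_{2})}=\norm{R}_{\La(\ell_{2+\epsilon}^{n},\ell_{2}^{n})}$: the inequality ``$\leq$'' holds because $\ell_{2}^{n}$ sits isometrically in both $\ell_{2}$ and $\ell_{2\pm\epsilon}$ via the first $n$ coordinates while $R\oplus 0$ annihilates the complement, and ``$\geq$'' follows by testing on vectors supported on the first $n$ coordinates.

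Finally I would apply Corollary \ref{case p=q=2} to $\widetilde{A},\widetilde{B},\widetilde{U},\widetilde{V}$ with $S$ the identity operator (the hypotheses $R\in\La(\ell_{2},\ell_{2-\epsilon})$ and $R\in\La(\ell_{2+\epsilon},\ell_{2})$ are automatic in finite dimensions), obtaining both $\norm{\abs{\widetilde{B}}-\abs{\widetilde{A}}}_{\La(\ell_{2})}\leq C\norm{R\oplus 0}_{\La(\ell_{2},\ell_{2-\epsilon})}$ and $\norm{\abs{\widetilde{B}}-\abs{\widetilde{A}}}_{\La(\ell_{2})}\leq C\norm{R\oplus 0}_{\La(\ell_{2+\epsilon},\ell_{2})}$ with $C=C(\epsilon)$; translating through the identifications above and taking the minimum of the two bounds yields \eqref{l2 estimate matrices}. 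There is no genuine obstacle here — the only points needing care are the norm-preservation identities in the previous paragraph and the fact (central to the paper) that $C$ does not depend on $\widetilde{A}$ or $\widetilde{B}$, which is what makes the final constant independent of $n$. As an alternative route one could skip the embedding and instead rerun the chain Proposition \ref{commutator estimates diagonalizable}, Proposition \ref{truncation estimate_inf}, Lemma \ref{crucial lemma}, Proposition \ref{Pil-Ben}$(iii)$ directly in the finite-dimensional setting, using that these results carry over verbatim as noted at the start of Section \ref{unconditional basis spaces} and that $\norm{\Tr_{\Delta,n}}\leq\norm{\Tr_{\Delta}}$ since $\sum_{k\leq n}\Pp_{k}$ is a norm-one projection on each $\ell_{p}$.
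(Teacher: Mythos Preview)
Your proposal is correct and matches the paper's own argument: the paper simply states that ``Corollary \ref{case p=q=2} implies'' the finite-dimensional statement, and your embedding $A\mapsto A\oplus 0$, $U\mapsto U\oplus \I$ together with the norm-preservation checks is exactly the routine verification this entails. Your alternative route (rerunning the chain directly in finite dimensions, as sanctioned by the remark at the start of Section~\ref{unconditional basis spaces}) is also in line with how the paper handles the other matrix results in Section~\ref{matrix estimates}.
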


We note that \eqref{l2 estimate matrices} in turn yields, for instance, the following estimate:
\begin{align*}
\|\abs{B}-\abs{A}\|_{\La(\ell_{2}^{n})}\leq C\|B-A\|_{\La(\ell_{2}^{n})}\min(\|U^{-1}\|_{\La(\ell_{2}^{n},\ell_{2-\epsilon}^{n})},\|V\|_{\La(\ell_{2+\epsilon}^{n},\ell_{2}^{n})}).
\end{align*}

\subsubsection{Acknowledgements}

Most of the work on this article was performed during a research
visit of the first-named author to the University of New South
Wales. He would like to thank UNSW and its faculty, in particular
Michael Cowling, for their wonderful hospitality. We are indebted
to many colleagues in Delft and at UNSW for their suggestions and
ideas. In particular, we would like to mention Ian Doust, Ben de
Pagter, Denis Potapov and Dmitriy Zanin. We also thank Albrecht
Pietsch for useful comments and references concerning 
$p$-summing operators.

\bibliographystyle{plain}
\bibliography{Bibliografie}

\end{document}